\documentclass[letterpaper, 11pt]{amsart}
\usepackage{amssymb,amsmath,amsfonts,amsthm}
\usepackage{graphicx}
\usepackage[normalem]{ulem}
\graphicspath{ {./images/} }
\usepackage{setspace}

\usepackage{mathrsfs}  
\usepackage{psfrag}
\usepackage{mathtools}
\usepackage{color}
\usepackage{todonotes}
\usepackage{enumitem}

\definecolor{Red}{rgb}{1,0,0}
\definecolor{Blue}{rgb}{0,0,1}
\definecolor{Green}{rgb}{0,1,0}
\definecolor{magenta}{rgb}{1,0,.6}
\definecolor{gold}{rgb}{.6,.5,0}
\definecolor{orange}{rgb}{1,0.4,0}
\definecolor{darkgreen1}{rgb}{0, .35, 0}
\definecolor{darkgreen}{rgb}{0, .6, 0}
\definecolor{darkred}{rgb}{.75,0,0}

\usepackage{hyperref}
\hypersetup{colorlinks=true, citecolor=green, linkcolor=blue, hypertexnames=false}

\usepackage{chngcntr}
\counterwithin{equation}{section}

\theoremstyle{plain}

\newtheorem{theorem}{Theorem}[section]
\newtheorem{lemma}[theorem]{Lemma}
\newtheorem{proposition}[theorem]{Proposition}
\newtheorem{corollary}[theorem]{Corollary}

\theoremstyle{remark}
\newtheorem{remark}[theorem]{Remark}
\newtheorem{definition}[theorem]{Definition}

\newcommand\numberthis{\addtocounter{equation}{1}\tag{\theequation}}

\newcommand{\Leb}{\operatorname{vol}}

           \def\ea{\end{array}}
          \def\ec{\end{center}}
     \def\ed{\end{description}}
        \def\ee{\end{equation}}
       \def\eea{\end{eqnarray}}
     \def\eeaa{\end{eqnarray*}}
 \def\et{\end{thebibliography}}

\def\bM{{\bf{M}}}

\newcommand{\interior}[1]{%
	{\kern0pt#1}^{\mathrm{o}}%
}

\def\supp{\operatorname{supp}}

\def\cA{{\mathcal A}}

\def\cC{{\mathcal C}}

\def\cU{{\mathcal U}}

\def\cR{{\mathcal R}}
\def\cB{{\mathcal B}}

\def\cF{{\mathcal F}}

\def\cN{{\mathcal N}}

\def\cR{{\mathcal R}}

\def\vep{\varepsilon}

\def\TT{{\mathbb T}}
\def\RR{{\mathbb R}}

\def\NN{{\mathbb N}}

\def\intr{\operatorname{int}}

	\title[Weak Margulis Measures on Expanding Foliations]{Margulis Measures on Expanding Foliations: Construction and Rigidity}
\date{\today}
\author{J\'er\^ome Buzzi, Yi Shi,  Fan Yang and Jiagang Yang}

\address{}
 \email{}

\thanks{}

\begin{document}

\begin{abstract} 
	Given a diffeomorphism preserving a one-dimensional expanding foliation $\cF$ with homogeneous exponential growth, we construct a family of reference measures on each leaf of the foliation with controlled Jacobian and a Gibbs property. We then prove that for any measure of maximal $u$-entropy, its conditional measures on each leaf must be equivalent to the reference measures. When the measure of maximal $u$-entropy is a Gibbs $\cF$-state (i.e., when the reference measures are equivalent to the leafwise Lebesgue measure), we prove that the log-Jacobian of $f$ must be cohomologous to a constant via a measurable function. We provide  several applications, including the strong and center foliations of Anosov diffeomorphisms, factor over Anosov diffeomorphisms, and perturbations of the time-one map of geodesic flows on surfaces with negative curvature.
\end{abstract}

\maketitle

\tableofcontents

\section{Introduction}
In his celebrated work \cite{Margulis}, Margulis constructed a family of measures $\{\mu^u_x\}$ supported on the unstable foliation of any Anosov diffeomorphism on $\TT^n$ with the following properties:
\begin{enumerate}
	\item (Constant $u$-Jacobian) For each $x\in \bM$, one has 
	$$
	f_*\mu^u_x= e^{-h_{\mathrm{top}}(f)}\mu^u_{f(x)}.
	$$
	\item ($s$-invariance) The stable holonomy $H^s_{x,y}$ maps $\mu^u_x$ to $\mu^u_y$.
\end{enumerate}
Throughout this article, we refer to $\{\mu_x^u\}$ as the (unstable) Margulis measures. The stable Margulis measures $\{\mu^s_x\}$ can be constructed analogously by considering $f^{-1}$. Using the global product structure between the stable and unstable foliations on the universal cover of $\TT^n$, one can then define a measure $\mu = \mu^u\times \mu^s$. The fact that both $\{\mu^u_x\}$ and $\{\mu^s_x\}$ have constant Jacobian $e^{-h_{\mathrm{top}}(f)}$ implies that $\mu$ is $f$-invariant. Margulis further showed that $\mu$ is the unique measure of maximal entropy (MME) of $f$, and used this construction to derive several statistical and topological properties of $f$, most notably the asymptotic counting of periodic orbits (the Prime Orbit Theorem). This development is parallel to the famous work of Bowen \cite{Bowen} on equilibrium states for uniformly hyperbolic diffeomorphisms and provide information that are not clear from Bowen's approach.

More recently, constructions of Margulis-type measures have been carried out for systems beyond uniform hyperbolicity. A notable work is \cite{BFT}, where the authors constructed Margulis measures for perturbations of the time-one map of Anosov flows (which are partially hyperbolic diffeomorphisms), and subsequently established the finiteness of measures of maximal entropy for such systems. In \cite{CPZ} and \cite{CPZ1} the authors uses the Carath\'eodory dimension theory to construct reference measures on the unstable foliation of Anosov systems; when the potential is zero, their construction yields the Margulis measures. A more recent work by Humbert \cite{Humbert} studies Margulis measures on the center-unstable and strong-unstable foliations of Anosov diffeomorphisms on $\TT^3$, and uses them to obtain statistical properties and rigidity for measures of maximal $uu$-entropy. 

Previous constructions of Margulis measures are typically functional analytic in nature, relying on the existence of fixed points of certain operators acting on spaces of leafwise measures. In this paper, we propose a purely geometric construction which does not assume any partially hyperbolic structure. More precisely, we consider diffeomorphisms preserving a foliation $\cF$ on which the dynamics is uniformly expanding. Under a technical condition which we call {\it homogeneous exponential growth (HEG)}, we show that there exists a family of leafwise measures $\{\mu^u_x\}$ satisfying the {\it weak Margulis property}:
$$
\frac{d f^n_*\mu^u_x}{d \mu^u_{f^n(x)}}\in[C^{-1},C]\cdot e^{-nH},
$$
for some uniform constant $C>1$ and $H>0$ depending only on $f$ and $\cF$, and for all $x\in\bM$ and $n\in\NN$. It turns out that $H$ coincides with the topological entropy of $f$ along the foliation $\cF$ (see \cite{HSX} and \cite{HHW}). We also establish a Gibbs-type property relating the measure of a $\cF$-disk to the time it takes for the disk to grow to a fixed size. Precise statements are given in Theorem~\ref{t.properties}. The weak Margulis measures constructed here will serve as the conditional measures of $\cF$-MMEs,  that is, invariant measures whose partial entropy along $\cF$ equals the topological entropy of $\cF$, as illustrated in Theorem~\ref{t.1}. 

We would like to emphasize that the HEG condition, although technical in its formulation, captures a robust geometric property: it requires that the exponential growth of $\cF$-disks under iteration is uniform across the manifold, both in rate and in scale. In particular, it rules out strong inhomogeneities in the expansion along $\cF$. 

This condition is satisfied in a wide range of natural examples. In particular, it holds for co-dimension one Anosov diffeomorphisms (Theorem~\ref{t.Anosov}), derived-from-Anosov diffeomorphisms on $\TT^3$ (Theorem~\ref{t.DA}), partially hyperbolic diffeomorphisms on $3$-nilmanifolds (Theorem~\ref{t.5.4}), as well as for geodesic flows on surfaces of negative curvature and perturbations of their time-one maps (Theorem~\ref{t.geodesic}). 
At present, we are not aware of any example where HEG fails when $\cF$ is minimal.


The weak Margulis measures allows us to obtain rigidity for any expanding foliation.
When the weak Margulis measures are absolutely continuous with respect to the leafwise Lebesgue measure, we  show that the log Jacobian of $f$ along $\cF$ must be cohomologous to a constant through a measurable solution to the cohomologous equation. We would like to remark that this rigidity result does not rely on any (partial) hyperbolicity structure.
In particular, if $f$ is $C^\infty$, $C^1$-close to the time-one map of the geodesic flow, volume preserving, and the Lyapunov exponents coincide with the topological entropy of $f$, then $f$ is the time-one map of a smooth flow whose log Jacobian along both $\cF^s$ and $\cF^u$ are cohomologous to constants. For more applications, see Section \ref{s.5} and \ref{s.geodesic}.


This paper is the first part of a broader project aimed at studying unique ergodicity, transversal invariance, and effective equidistribution for expanding foliations. Central to this program are measures of maximal $\cF$-entropy ($\cF$-MMEs). Just as classical MMEs capture global dynamical complexity and asymptotics, we expect $\cF$-MMEs to reflect the intrinsic geometry and topology of the foliation $\cF$.  Preliminary work along this direction can be found in \cite{UVYY1} and \cite{UVYY2} which link $u$-MMEs of factor Anosov systems to the transversal invariant measures of $\cF^u$, and obtain effective equidistribution for the hitting measure of the unstable foliation on any cross section.

\section{Weak Margulis measures: a general setting}\label{s.construction}

Let $f$ be a $C^1$ diffeomorphism on a compact manifold $\bM$, and $\cF$ an $f$-invariant continuous foliation with   one-dimensional $C^1$ leaves, such that $f|_{\cF}$ is expanding: there exist two constants $1<\lambda < K_f$ such that for every $x\in\bM$ one has 
\begin{equation}\label{e.exp}
1<\lambda < m(Df_x|_{T_x\cF})\le \|Df_x|_{T_x\cF}\|<K_f.
\end{equation}
Here $K_f$ can simply be taken as the $C^1$ norm of $f$.  The goal of this section is to construct a family of measures on each $\cF$-disk with bounded length, which we will call weak Margulis measures. The construction is carried out in Section \ref{ss.construction}, and the properties of these measures are stated in Theorem \ref{t.properties}.

\subsection{Construction}\label{ss.construction}

Given a measure $\mu$ and a subset $A\subset \bM$ with $\mu(A)>0$, we denote by $\mu|_A$ the conditional measure $\nu(\cdot\cap A)/\mu(A)$. In particular, any conditional measure is always a probability measure.

Recall that leaves of $\cF$ are one-dimensional. In particular, all $\cF$-disks are compact curves with positive and finite length. On each leaf of $\cF$ we denote by $\Leb_\cF$ the volume measure induced by the Riemannian structure of $\bM$.
An $\cF$-disk is a one-dimensional disk $D$ (i.e., a $C^1$ embedding from $[0,1]$ to $\bM$) that is contained in a leaf $\cF(x)$ for some $x\in \bM$. For any $\cF$-disk $D$, we denote by $|D|_\cF = \Leb_\cF(D)$ the length of $D$.  We also write $\Leb_D$ for the conditional probability measure $\Leb_{\cF}|_D$. 

\begin{definition}\label{d.HG}
	We say that $\cF$ has homogeneous exponential growth (henceforth homogeneous growth), if there exist $H_\cF>0, C_G>1$ (G for ``growth'') such that every $\cF$-disk $D$ with $|D|_{\cF}\in [1,K_f)$ satisfies 
	\begin{equation}
		C_G^{-1} e^{nH_\cF} \le |f^n(D)|_{\cF}		\le C_Ge^{nH_\cF},\,\, \forall n\in\NN.
	\end{equation}	
\end{definition}
Note that for every $\cF$-disk $D$ with $|D|_\cF\le 1$, there must exist some $n\in\NN$ such that $|f^{n}(D)|_\cF\in [1,K_f)$; here $n=0$ if $|D|_\cF = 1$. We thus define 
\begin{equation}\label{e.n}
	n_0(D) = \min\{k\in\NN: |f^k(D)|_\cF \ge 1\}.
\end{equation}

\begin{remark}\label{r.vep}{
	Here the requirement that $|D|_\cF\in [1,K_f)$ is only for the sake of convenience, and all the results below still hold with $[1,K_f)$ replaced by $[\vep,K_f\vep)$ where $\vep>0$ is an arbitrary constant.	In this case, the definition of $n_0$ should be modified as 
	$$
		n_0(D) = \min\{k\in\NN: |f^k(D)|_\cF \ge \vep\},
	$$
	and all the results in this paper remain valid.}
\end{remark}

Below we briefly recall two different definitions of the topological entropy of $f$ along an expanding foliation $\cF$, which is defined similarly as the unstable topological entropy in the literature. These definitions do not require $\cF$ to have homogeneous growth.

Given an expanding foliation of a $C^1$ diffeomorphism $f$, \cite{HSX} defines the $\cF$-topological entropy as the volume growth rate of $f$ on $\cF$:
$$
\chi_\cF(x,r) = \limsup_{n\to\infty} \frac1n \log\left(\Leb_\cF \left(f^n(B_r^\cF(x))\right) \right),
$$
where $B^\cF_r(x)$ is the $\cF$-disk on $\cF(x)$ centered at $x$ with radius $r>0$. Then, define 
$$
\chi_\cF(f) = \sup_{x\in\bM} \chi_\cF(x,r)
$$
as the maximum volume growth rate of $\cF$ under $f$, which can be shown to be independent of $r$.

On the other hand, \cite[Definition 1.4]{HHW} defines the $\cF$-topological entropy $h^\cF_{top}(f)$ as the exponential growth rate of leafwise separated sets; the precise definition is omitted in this paper. By \cite[Theorem C]{HHW} one has 
$
\chi_\cF(f) = h^\cF_{top}(f).
$

 The following lemma follows immediately from the definition, and thus the proof is omitted.
	\begin{lemma}
		If $\cF$ has homogeneous growth, then one must have 
		$$
		H_\cF =  h^\cF_{top}(f)= \chi_\cF(f)  =  \chi_\cF(x,r), \,\,\forall x\in\bM, r>0. 
		$$
	\end{lemma}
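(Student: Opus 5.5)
The plan is to show $\chi_\cF(x,r)=H_\cF$ for every $x\in\bM$ and $r>0$, using only the homogeneous growth bounds. Taking the supremum over $x$ then gives $\chi_\cF(f)=H_\cF$, and \cite[Theorem C]{HHW} supplies $h^\cF_{top}(f)=\chi_\cF(f)$.

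Fix $x$ and $r$, and let $D=B^\cF_r(x)$; this is an $\cF$-disk of length $2r$ up to the Riemannian normalization.

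\emph{Case $|D|_\cF\in[1,K_f)$.} Definition~\ref{d.HG} applies directly:
$$
nH_\cF-\log C_G\le \log|f^n(D)|_\cF\le nH_\cF+\log C_G .
$$
Dividing by $n$ and letting $n\to\infty$ gives $\chi_\cF(x,r)=H_\cF$. In this case the limsup is in fact a limit.

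\emph{Case $|D|_\cF<1$.} Set $k=n_0(D)$ as in \eqref{e.n}. By \eqref{e.exp}, $|f^{k}(D)|_\cF<K_f$ (if $k\ge1$, then $|f^{k-1}(D)|_\cF<1$ and $f$ stretches lengths by at most $K_f$), and $|f^k(D)|_\cF\ge1$. So $f^k(D)$ satisfies the hypothesis of Definition~\ref{d.HG}, and for $n\ge k$,
$$
C_G^{-1}e^{(n-k)H_\cF}\le |f^n(D)|_\cF\le C_Ge^{(n-k)H_\cF}.
$$
Since $k$ is fixed, dividing by $n$ gives the limit $H_\cF$.

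\emph{Case $|D|_\cF\ge K_f$.} Subdivide $D$ into $N$ consecutive subarcs $D_1,\dots,D_N$, each of length in $[1,K_f)$. This is possible because $K_f>1$: take $N=\lceil |D|_\cF\rceil$ and use equal lengths $|D|_\cF/N$, which lie in $[1,2)$; if $K_f\le 2$, choose $N$ so that $|D|_\cF/N\in[1,K_f)$, which is possible since the intervals $[N,NK_f)$ eventually overlap and cover all large lengths. Then $|f^n(D)|_\cF=\sum_i|f^n(D_i)|_\cF$ lies between $N C_G^{-1}e^{nH_\cF}$ and $N C_G e^{nH_\cF}$. Since $N$ is independent of $n$, again the rate is $H_\cF$.

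The only mildly delicate points are the choice of subdivision and the upper bound $|f^{n_0}(D)|_\cF<K_f$. Both are routine consequences of \eqref{e.exp}, so I expect no real obstacle: the lemma is essentially a rewriting of the definition.
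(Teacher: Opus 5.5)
Your route is the one the paper intends. The paper omits the proof as immediate from Definition~\ref{d.HG} and uses \cite[Theorem C]{HHW} for $h^\cF_{top}(f)=\chi_\cF(f)$, exactly as you do. Your Cases 1 and 2 are correct, including the bound $|f^{n_0(D)}(D)|_\cF<K_f$.

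Case 3, however, is wrong as written, in two places:
\begin{enumerate}
\item With $N=\lceil |D|_\cF\rceil$, the pieces have length $|D|_\cF/N\le 1$, not in $[1,2)$; you would need $\lfloor |D|_\cF\rfloor$.
\item When $K_f<2$, a subdivision into arcs of length in $[1,K_f)$ need not exist at all. If $K_f\le |D|_\cF<2$, a single arc is too long, and two or more arcs have total length at least $2>|D|_\cF$. The union $\bigcup_N[N,NK_f)$ covers all large lengths, but it does not cover $[K_f,\infty)$. So ``choose $N$ so that $|D|_\cF/N\in[1,K_f)$'' fails precisely for the short disks.
\end{enumerate}

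The repair uses only what you have already proved: you never need pieces of length in $[1,K_f)$. Cut $D$ into $N=\lceil |D|_\cF\rceil$ consecutive arcs of length at most $1$. Each arc has exponential growth rate $H_\cF$ by Case 1 or Case 2. Since $N$ does not depend on $n$, the sum satisfies $\max_i|f^n(D_i)|_\cF\le |f^n(D)|_\cF\le N\max_i|f^n(D_i)|_\cF$. Hence $\frac1n\log|f^n(D)|_\cF\to H_\cF$ as well.

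Alternatively, take the lower bound from a sub-arc of length $1$. Take the upper bound from $\lfloor |D|_\cF\rfloor$ unit arcs plus one remainder arc of length $<1$, handled by Case 2. With this fix your argument proves the lemma.
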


Now we construct the weak Margulis measures on $\cF$.
\begin{definition}\label{d.G}
	Assume that $\cF$ is an $f$-invariant expanding foliation with homogeneous growth. For any $\cF$-disk $D$, define
	$$
	\Gamma(D) = \left\{\nu: \nu \mbox{ is a limit point of } (f^{-n})_{*}\left(\Leb_\cF|_{f^n(D)}\right)\right\} \subset \operatorname{Prob}(\overline D)
	$$
	where $\Leb_\cF|_{f^n(D)}$ is the leafwise Lebesgue measure $\Leb_\cF$ conditioned on $f^n(D)$ (so that it becomes a probability measure), the limit is taken in the weak-* topology on $\bM$, and $\operatorname{Prob}(\overline D)$ denotes the collection of probability measures on $\overline D$.
\end{definition}

Note that $\Gamma(D) \ne\emptyset$ for every such $D$, since the space of probability measures on $\bM$ is compact.  Even though the definition applies to $\cF$-disks with arbitrary length, below we will only consider the case when $|D|_\cF\le 1.$

\subsection{Properties of weak Margulis measures}

From now on, for $a,b>0$ and $C\ge 1$ we write $a\asymp_C b$ if $C^{-1}b<a<Cb$. Note that
$$
a\asymp_{C_1}b \mbox{ and } b\asymp_{C_2}c \implies a \asymp_{C_1C_2} c.
$$
Following this notation, $\cF$ has homogeneous growth if one has
$$
|f^{n}(D)|_\cF\asymp_{C_G} e^{nH_\cF}
$$
for every $\cF$-disk $D$ with $|D|_\cF\in [1,K_f)$ and $n\in\NN$.

The properties of measures in $\Gamma$ are summarized by the following theorem:
\begin{theorem}\label{t.properties}
	Assume that $\cF$ is an expanding foliation for the $C^1$ diffeomorphism $f$ with homogeneous exponential growth. Then, there exists $L>1$ such that for any $\cF$-disk $D$, the measures in $\Gamma(D)$ given by Definition \ref{d.G} have the following properties:
	\begin{enumerate}
		\item (Invariance) $f_*(\Gamma(D)) = \Gamma (f(D))$. 
		\item (Gibbs property) Assume that $|D|_\cF\le 1$. Then, for any sub $\cF$-disk $I\subset D$ and any $\nu\in\Gamma(D)$, we have
		\begin{equation*}\label{e.Gibbs}
			\nu(I)\asymp_L \frac{e^{-n_0(I)H_\cF}}{e^{-n_0(D)H_\cF}}.
		\end{equation*}
		Also, if $|D|_\cF = 1$ then $n_0(D) =0$ and so $\nu(I)\asymp_L {e^{-n_0(I)H_\cF}}.$ Consequently, $\nu$ has no atom and $\supp(\nu) = D.$
		\item (Uniformly equivalent at intersections) If $D = \intr(D_1)\cap \intr(D_2)\ne\emptyset$, then any $\nu_1\in\Gamma(D_1)$ and $\nu_2\in\Gamma(D_2)$ are equivalent on $D$, with 
		$$
		\frac{d\nu_1|_{D}}{d\nu_2|_D}\asymp_{L^4}1.
		$$
		\item (Conformal) For any $\cF$-disk $D_1$ with $|D_1|_{\cF}\le 1$ and $n\in\NN$, let $D_2\subset f^n(D_1)$ be any $\cF$-disk with $|D_2|_\cF\le 1$. Then, one has 
		$$
		\frac{d (f^n)_*\nu_1}{d\nu_2} \asymp_{L^2}e^{-nH_\cF} \cdot \frac{e^{-n_0(D_2)H_\cF}}{e^{-n_0(D_1)H_\cF}},\,\, \forall \nu_1\in\Gamma (D_1),\nu_2\in \Gamma(D_2).
		$$ 
		In particular, if $|D_1|_\cF = |D_2|_\cF = 1$, then $\frac{d (f^n)_*\nu_1}{d\nu_2} \asymp_{L^2} e^{-nH_\cF}$.
	\end{enumerate}
\end{theorem}

\begin{remark}
	The denominator $e^{-n_0(D)H_\cF}$ in the Gibbs property, as well as the factor $\frac{e^{-n_0(D_2)H_\cF}}{e^{-n_0(D_1)H_\cF}}$ in the conformal property, should be considered as a normalizing factor to compensate for the various sizes of $\cF$-disks.
\end{remark}

\begin{remark}
	A key feature of our approach is that we avoid considering the measurability of $D\mapsto\Gamma(D)$ or of $f$-invariant sections $\nu_x\in\Gamma(D_x)$ for some choice of $D_x\subset\cF(x)$. Instead, the measurability of $e^{-n_0(I) H_\cF}$ will play an important role.
\end{remark}
The rest of this section is devoted to the proof of Theorem \ref{t.properties}, which is split into several lemmas.

\begin{lemma}\label{l.inv}
We have 	$f_*(\Gamma(D)) = \Gamma (f(D))$. 
\end{lemma}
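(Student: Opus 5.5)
The plan is to prove the two inclusions directly from Definition~\ref{d.G}, using only continuity of $f_*$ in the weak-* topology and a reindexing of the defining sequences. The key identity is that for every $n\in\NN$,
$$
f_*\Big((f^{-n})_*\big(\Leb_\cF|_{f^n(D)}\big)\Big) = (f^{-(n-1)})_*\big(\Leb_\cF|_{f^{n-1}(f(D))}\big),
$$
since $f^n(D) = f^{n-1}(f(D))$ and $f\circ f^{-n} = f^{-(n-1)}$. We write $\mu_n(D) := (f^{-n})_*(\Leb_\cF|_{f^n(D)})$. The identity then reads $f_*\mu_n(D) = \mu_{n-1}(f(D))$, or equivalently $\mu_m(f(D)) = f_*\mu_{m+1}(D)$.

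For the inclusion $f_*\Gamma(D)\subset\Gamma(f(D))$, let $\nu\in\Gamma(D)$ and choose $n_k\to\infty$ with $\mu_{n_k}(D)\to\nu$. Since $f$ is continuous on the compact manifold $\bM$, the push-forward $f_*$ is weak-* continuous on $\operatorname{Prob}(\bM)$. Hence $\mu_{n_k-1}(f(D)) = f_*\mu_{n_k}(D)\to f_*\nu$, so $f_*\nu\in\Gamma(f(D))$.

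For the reverse inclusion, let $\eta\in\Gamma(f(D))$ with $\mu_{m_k}(f(D))\to\eta$. Then $f_*\mu_{m_k+1}(D)\to\eta$. Applying the continuous map $(f^{-1})_*$ gives $\mu_{m_k+1}(D)\to (f^{-1})_*\eta =: \nu$, so $\nu\in\Gamma(D)$ and $f_*\nu=\eta$. The supports are consistent: $\nu$ is supported on $f^{-1}(\overline{f(D)}) = \overline D$.

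There is no real obstacle here; the only points to check are that $f_*$ is a homeomorphism of $\operatorname{Prob}(\bM)$ (it is, since $f$ is a homeomorphism) and the index bookkeeping $n\mapsto n-1$. The statement does not use homogeneous growth.
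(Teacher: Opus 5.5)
Your proof is correct and is essentially the paper's argument: reindex $n_k\mapsto n_k-1$, using $f\circ f^{-n}=f^{-(n-1)}$ and $f^n(D)=f^{n-1}(f(D))$. You are more explicit than the paper in that you split the equality into the two inclusions and justify each by the weak-* continuity of $f_*$ and $(f^{-1})_*$, which the paper leaves implicit in its chain of ``equivalent'' statements.
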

\begin{proof}
	$\nu\in \Gamma(D)$ if and only if there exists a sequence of positive integers $n_1<n_2\cdots$ such that $\nu = \lim_{k\to\infty} (f^{-n_k})_*(\Leb_\cF|_{f^{n_k}(D)}).$ This is equivalent to 
	$$
	f_*(\nu) = \lim_{k\to\infty} (f^{-(n_k-1)})_*(\Leb_\cF|_{f^{n_k-1}(f(D))}),
	$$
	which is true if and only if $f_*(\nu)\in\Gamma(f(D))$.
\end{proof}

\begin{lemma}\label{p.gibbs}
	There exists $L>1$ such that for any $\cF$-disk $D$ with $|D|_\cF\le 1$, any sub $\cF$-disk $I\subset D$ and any $n \ge n_0(I)$, it holds that  
	$$
	(f^{-n})_{*}\left(\Leb_\cF|_{f^n(D)}\right)(I) \asymp_L \frac{e^{-n_0(I)H_\cF}}{e^{-n_0(D)H_\cF}}.
	$$
	More precisely, it is possible to take $L = (C_G)^2$.
\end{lemma}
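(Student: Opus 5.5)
The quantity to estimate is simply a ratio of lengths:
$$
(f^{-n})_*\bigl(\Leb_\cF|_{f^n(D)}\bigr)(I)=\frac{|f^n(I)|_\cF}{|f^n(D)|_\cF},
$$
since $f^n(I)\subset f^n(D)$ and $f^n$ is a bijection between them. The plan is therefore to estimate the numerator and the denominator separately via homogeneous growth, after first renormalizing each disk to unit-scale size.

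For the numerator, set $k=n_0(I)\le n$. By definition of $n_0$ we have $|f^k(I)|_\cF\ge 1$. We also need the upper bound $|f^k(I)|_\cF<K_f$. If $k=0$, this holds because $|I|_\cF\le|D|_\cF\le1$. If $k\ge1$, it holds because $|f^{k-1}(I)|_\cF<1$ and $f$ stretches $\cF$-lengths by less than $K_f$, by \eqref{e.exp}. So $f^k(I)$ is an $\cF$-disk with length in $[1,K_f)$, and Definition \ref{d.HG} applied to it with $n-k$ iterates gives
$$
|f^n(I)|_\cF=|f^{n-k}(f^k(I))|_\cF\asymp_{C_G}e^{(n-n_0(I))H_\cF}.
$$

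For the denominator, the same argument with $k'=n_0(D)$ applies. Since $I\subset D$, lengths are monotone and $n_0(D)\le n_0(I)\le n$, so the condition $n\ge n_0(D)$ needed for this step is automatic. This gives
$$
|f^n(D)|_\cF\asymp_{C_G}e^{(n-n_0(D))H_\cF}.
$$

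Dividing the two estimates, the $e^{nH_\cF}$ factors cancel and we obtain
$$
\frac{|f^n(I)|_\cF}{|f^n(D)|_\cF}\asymp_{C_G^2}\frac{e^{-n_0(I)H_\cF}}{e^{-n_0(D)H_\cF}},
$$
which is the claim with $L=C_G^2$, using the multiplicativity of $\asymp$ noted before Theorem \ref{t.properties}.

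The only delicate point is checking that each renormalized disk $f^{n_0(\cdot)}(\cdot)$ really has length in $[1,K_f)$, so that homogeneous growth can be applied to it; the upper bound $K_f$ from \eqref{e.exp} is exactly what makes this work. A minor side issue is that $I$ might be degenerate. Sub $\cF$-disks have positive length by definition, so $n_0(I)$ is finite; it exists by expansion with rate $\lambda>1$.
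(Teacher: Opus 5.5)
Your proposal is correct and follows the paper's proof. You write the pushed-forward measure of $I$ as $|f^n(I)|_\cF/|f^n(D)|_\cF$, renormalize each disk at time $n_0(\cdot)$ to length in $[1,K_f)$, apply homogeneous growth to numerator and denominator, and divide to get $L=C_G^2$. You also check $|f^{n_0(I)}(I)|_\cF<K_f$ via \eqref{e.exp}, and you keep $n_0(I)$ in the numerator and $n_0(D)$ in the denominator, where the paper's displayed estimates have these indices swapped by a typo.
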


\begin{proof} Recall that $\cF$-disks are compact curves with positive and finite length. We write
	\begin{align*}
		(f^{-n})_{*}\left(\Leb_\cF|_{f^n(D)}\right)(I) &= \frac{|f^n(I)|_\cF}{|f^n(D)|_\cF}.  
	\end{align*}
	For the numerator, note that $|f^{n_0(I)}(I)|_\cF\in[1,K_f)$, and by Definition \ref{d.HG} we obtain
	$$
	|f^n(I)|_\cF \asymp_{C_G} e^{(n-n_0(D))H_\cF}.
	$$ 
	For the denominator, the same argument yields 
	$$
	|f^n(D)|_\cF \asymp_{C_G} e^{(n-n_0(D))H_\cF}.
	$$
	that is, 
	$$
	|f^n(D)|_\cF^{-1} \asymp_{C_G} e^{-(n-n_0(I))H_\cF}.
	$$
	Combining the two, we conclude that 
	$$
	(f^{-n})_{*}\left(\Leb_\cF|_{f^n(D)}\right)(I) \asymp_{(C_G)^2} \frac{e^{-n_0(I)H_\cF}}{e^{-n_0(D)H_\cF}}, 
	$$
	as required.
\end{proof}
{Note that the above estimate in Lemma 2.7 passes to the weak-* limit $\nu$. Indeed, it is uniform in $n\ge n_0(I)$ and the disk $I$ can always be approximated by some other $\cF$-disks $J,K$ satisfying $J\subset  \operatorname{int}(I)\subset I \subset \operatorname{int}(K)$ with $n_0(J),n_0(K) = n_0(I) \pm1$.} Increasing $L$ if necessary, we obtain the following corollary of Proposition \ref{p.gibbs}.
\begin{lemma}\label{p.ac1}
	For any $\cF$-disks $I\subset D$ with $|D|_\cF\le 1$ and any $\nu\in \Gamma(D)$, it holds that  
	$$
	\nu(I)\asymp_{L} \frac{e^{-n_0(I)H_\cF}}{e^{-n_0(D)H_\cF}}.
	$$
	Consequently, any $\nu,\nu'\in\Gamma(D)$ are equivalent to each other, with 
	$$
	\frac{d\nu}{d\nu'}\asymp_{L^2} 1.
	$$
\end{lemma}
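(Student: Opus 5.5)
The plan is to pass the uniform estimate of Lemma \ref{p.gibbs} to the weak-* limit by sandwiching $I$ between slightly smaller and larger disks, and then to derive equivalence of any two measures in $\Gamma(D)$ from the two-sided bound on all subdisks.

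Fix $\nu\in\Gamma(D)$, realized as $\nu=\lim_k \nu_{n_k}$ with $\nu_n=(f^{-n})_*(\Leb_\cF|_{f^n(D)})$. Let $I\subset D$ be a sub $\cF$-disk. For the upper bound, take an $\cF$-disk $K$ with $I\subset\intr(K)$ and $|K|_\cF\le 1$, chosen so that $n_0(K)\ge n_0(I)-1$. Concretely, enlarge $I$ by a tiny amount; if $K$ is close enough to $I$, then $|f^{n_0(I)-1}(K)|_\cF<1$. If $I$ touches the boundary of $D$, I work inside $\cF(x)$ rather than $D$: Lemma \ref{p.gibbs} applied to $K\cap$ a slightly larger disk still gives the estimate, or I simply apply the estimate to $K\cap D$ relative to $D$, since only the open-set bound on $\overline D$ is needed. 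Portmanteau for the closed set $I$ gives
\[
\nu(I)\le\limsup_k\nu_{n_k}(I)\le \limsup_k \nu_{n_k}(K)\le C_G^2\,\frac{e^{-n_0(K)H_\cF}}{e^{-n_0(D)H_\cF}}\le C_G^2e^{H_\cF}\,\frac{e^{-n_0(I)H_\cF}}{e^{-n_0(D)H_\cF}}.
\]
For the lower bound, take $J\subset\intr(I)$ with $n_0(J)\le n_0(I)+1$; this is possible since shrinking $I$ slightly keeps $|f^{n_0(I)+1}(J)|_\cF\ge\lambda|f^{n_0(I)}(J)|_\cF\ge 1$ when $|f^{n_0(I)}(J)|_\cF\ge\lambda^{-1}$. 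Then $\nu(I)\ge\nu(\intr J)\ge\liminf\nu_{n_k}(\intr J)$. Applying Lemma \ref{p.gibbs} to $\intr J$ (length equals $|J|_\cF$) gives the lower bound with constant $C_G^2e^{H_\cF}$. Hence $L=C_G^2e^{H_\cF}$ suffices.

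The one subtlety I expect to be the main obstacle is the boundary case when $I$ shares an endpoint with $D$. There $K\not\subset D$, but $\nu$ lives on $\overline D$, so $\nu(I)\le\nu(K\cap\overline D)$. Portmanteau should then be applied to the relatively open set $\intr(K)\cap D$ in $\overline D$, whose measure under $\nu_n$ equals $\nu_n(D\cap K)$. Since $D\cap K$ is a disk containing $I$ and contained in $K$, we have $n_0(I)-1\le n_0(D\cap K)\le n_0(I)$, so Lemma \ref{p.gibbs} applies to it.

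For equivalence, take $\nu,\nu'\in\Gamma(D)$. For every subdisk $I$, the two estimates give $\nu(I)\asymp_{L^2}\nu'(I)$. Finite disjoint unions of subdisks form an algebra generating the Borel sets of $\overline D$. By monotone class, the inequalities $L^{-2}\nu'\le\nu\le L^2\nu'$ extend to all Borel sets, since both sides are measures and the inequalities pass to monotone limits. Thus $\nu\ll\nu'\ll\nu$, and Radon–Nikodym gives $d\nu/d\nu'\in[L^{-2},L^2]$ almost everywhere.
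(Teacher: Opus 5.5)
Your strategy is the paper's: sandwich $I$ between slightly smaller and larger disks whose $n_0$ differs from $n_0(I)$ by at most one, pass the uniform bound of Lemma \ref{p.gibbs} to the weak-* limit, and enlarge $L$. However, the limit step, which is the only nontrivial step here, is written with both semicontinuity inequalities reversed. For a closed set $F$, the portmanteau theorem gives $\limsup_k\nu_{n_k}(F)\le\nu(F)$, not $\nu(F)\le\limsup_k\nu_{n_k}(F)$. For an open set $G$ it gives $\nu(G)\le\liminf_k\nu_{n_k}(G)$, not $\nu(G)\ge\liminf_k\nu_{n_k}(G)$. Think of $\delta_{1/k}\to\delta_0$ on $[0,1]$: in the limit, mass can pile onto a closed set and escape from an open one.

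So the first inequality of your upper-bound chain and the second inequality of your lower-bound chain are unjustified, and false in general. A telltale sign is that, if your inequalities held, $K$ and $J$ would be pointless: you could apply Lemma \ref{p.gibbs} to $I$ itself.

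The repair uses the same ingredients with the roles exchanged. Your boundary-case paragraph already does this correctly for the upper bound. Take $K\cap D$ in all cases, which gives $n_0(I)-1\le n_0(K)\le n_0(K\cap D)\le n_0(I)$ and treats interior and boundary situations at once. Apply portmanteau on the compact set $D$, which carries all the measures, to the relatively open set $\intr(K)\cap D\supset I$:
\[
\nu(I)\le\nu\bigl(\intr(K)\cap D\bigr)\le\liminf_{k}\nu_{n_k}(K\cap D)\le C_G^2\,\frac{e^{-n_0(K\cap D)H_\cF}}{e^{-n_0(D)H_\cF}}\le C_G^2e^{H_\cF}\,\frac{e^{-n_0(I)H_\cF}}{e^{-n_0(D)H_\cF}}.
\]
For the lower bound, use that $I$ is closed:
\[
\nu(I)\ge\limsup_k\nu_{n_k}(I)\ge C_G^{-2}\,\frac{e^{-n_0(I)H_\cF}}{e^{-n_0(D)H_\cF}},
\]
so the inner disk $J$ is not needed at all.

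In the equivalence part, finite disjoint unions of closed subdisks do not form an algebra: the complement of a closed subdisk consists of half-open arcs. Use arcs of all types instead. This is harmless because the Gibbs bound makes $\nu$ atomless, since $n_0(I)\to\infty$ as $|I|_\cF\to0$. With that change, your monotone class argument goes through.
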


Next, we improve Lemma \ref{p.ac1} by considering the intersection of two $\cF$-disks.

\begin{lemma}\label{p.ac2}
	Let $D_1,D_2$ be two $\cF$-disks with $|D_i|_\cF\le 1$, and take any $\nu_i\in\Gamma(D_i), i = 1,2$. Write $D:= \intr(D_1)\cap \intr(D_2)$. Then, the measures $\nu_1|_D$ and $\nu_2|_D$ are equivalent to each other, with 
	$$
	\frac{d\nu_1|_D}{d\nu_2|_D} \asymp_{L^4} 1.
	$$ 
\end{lemma}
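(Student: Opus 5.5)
Since $D_1,D_2$ lie in the same one-dimensional leaf and $D=\intr(D_1)\cap\intr(D_2)\neq\emptyset$, $D$ is an open sub-arc of both. I will compare $\nu_1$ and $\nu_2$ on sub-disks of $D$ using the Gibbs estimate of Lemma \ref{p.ac1}. For any $\cF$-disk $I\subset D$, Lemma \ref{p.ac1} applied inside $D_1$ and inside $D_2$ gives
$$
\nu_1(I)\asymp_L \frac{e^{-n_0(I)H_\cF}}{e^{-n_0(D_1)H_\cF}},\qquad \nu_2(I)\asymp_L \frac{e^{-n_0(I)H_\cF}}{e^{-n_0(D_2)H_\cF}}.
$$
The quantity $e^{-n_0(I)H_\cF}$ is the same in both estimates. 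Dividing them yields
$$
\frac{\nu_1(I)}{\nu_2(I)}\asymp_{L^2} c,\qquad c:=e^{(n_0(D_1)-n_0(D_2))H_\cF},
$$
where $c$ does not depend on $I$. Sub-disks of $D$ generate the Borel $\sigma$-algebra on $D$, and both measures are finite and non-atomic (Lemma \ref{p.ac1}). So the two-sided bound $c L^{-2}\nu_2(I)\le\nu_1(I)\le cL^2\nu_2(I)$ extends from intervals to finite disjoint unions of intervals, and then to all Borel subsets of $D$ by outer regularity and a monotone class argument. Hence $\nu_1$ and $\nu_2$ are equivalent on $D$, and $d\nu_1/d\nu_2\in[cL^{-2},cL^2]$ almost everywhere on $D$.

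Next I pass to the conditional measures $\nu_i|_D=\nu_i(\cdot\cap D)/\nu_i(D)$. These are well defined because $\supp\nu_i=D_i$ and $D$ has nonempty interior, so $\nu_i(D)>0$. We have
$$
\frac{d\nu_1|_D}{d\nu_2|_D}=\frac{d\nu_1}{d\nu_2}\cdot\frac{\nu_2(D)}{\nu_1(D)}.
$$
Applying the previous bound to the set $D$ itself gives $\nu_2(D)/\nu_1(D)\asymp_{L^2}c^{-1}$. The constant $c$ therefore cancels, and the two factors combine to give $d\nu_1|_D/d\nu_2|_D\asymp_{L^4}1$, which is the claim.

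The only delicate point is the measure-theoretic passage from sub-disks to arbitrary Borel sets, together with the fact that $D$ is open rather than a closed disk. I would handle $D$ by exhausting it with an increasing sequence of closed sub-disks $I_k$ and using continuity of measure, since the bounds on $\nu_i(I_k)$ are uniform in $k$. The extension to Borel sets follows from the fact that an inequality $\nu_1\le C\nu_2$ that holds on all intervals holds on all Borel sets.
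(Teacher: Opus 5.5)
Your proposal is correct and follows the paper's proof. Both arguments compare $\nu_1(I)$ and $\nu_2(I)$ on sub-disks $I\subset D$ via the Gibbs estimate of Lemma \ref{p.ac1}, apply the same estimate to $D$ itself, and let the normalizing factors $e^{-n_0(D_i)H_\cF}$ cancel to give the $L^4$ bound. Your extension from sub-disks to Borel sets and your exhaustion of the open set $D$ by closed sub-disks simply make explicit what the paper leaves to the phrase ``since $I$ is arbitrary.''
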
 

\begin{proof}
	Let $I\subset D$ be any $\cF$-disk. Then, by Proposition \ref{p.ac1} we obtain 
	$$
	\nu_i(I)\asymp_{L} \frac{e^{-n_0(I)H_\cF}}{e^{-n_0(D_i)H_\cF}}, i=1,2.
	$$
	Note that the same estimate holds when $I = D.$ This leads to 
	\begin{align*}
		\frac{\nu_1|_D(I)}{\nu_2|_D(D)} &= \frac{\nu_1(I)}{\nu_2(I)}\cdot\frac{\nu_2(D)}{\nu_1(D)}\\
		&\asymp_{L^4} \frac{\frac{e^{-n_0(I)H_\cF}}{e^{-n_0(D_1)H_\cF}}}{\frac{e^{-n_0(I)H_\cF}}{e^{-n_0(D_2)H_\cF}}} \cdot \frac{\frac{e^{-n_0(D)H_\cF}}{e^{-n_0(D_2)H_\cF}}}{\frac{e^{-n_0(D)H_\cF}}{e^{-n_0(D_1)H_\cF}}}\\
		&\asymp_{L^4} 1.
	\end{align*}
	Since $I$ is arbitrary, we conclude that $\nu_1|_D$, $\nu_2|_D$ are equivalent to each other, with Radon-Nikodym derivative between $L^{-4}$ and $L^4.$
\end{proof}

Finally, we consider the conformal property of measures in $\Gamma(D)$. 
\begin{lemma}\label{l.jac}
	For any $\cF$-disk $D_1$ with $|D_1|_{\cF}\le 1$ and $n\in\NN$, let $D_2\subset f^n(D_1)$ be any $\cF$-disk with $|D_2|_\cF\le 1$. Then, one has 
	$$
	\frac{d (f^n)_*\nu_1}{d\nu_2} \asymp_{L^2} e^{-nH_\cF}\cdot \frac{e^{-n_0(D_2)H_\cF}}{e^{-n_0(D_1)H_\cF}},\,\, \forall \nu_1\in\Gamma (D_1),\nu_2\in \Gamma(D_2).
	$$ 
\end{lemma}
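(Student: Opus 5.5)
To prove Lemma \ref{l.jac}, I compare the two measures on sub $\cF$-disks of $D_2$ and then pass to the Radon--Nikodym derivative, as in the proof of Lemma \ref{p.ac2}.

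Fix $\nu_1\in\Gamma(D_1)$ and $\nu_2\in\Gamma(D_2)$. Let $I\subset D_2$ be any sub $\cF$-disk. Then $J:=f^{-n}(I)$ is a sub $\cF$-disk of $D_1$, and
$$
(f^n)_*\nu_1(I)=\nu_1(J).
$$

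The key combinatorial fact concerns $n_0(J)$. Since $|I|_\cF\le 1$ and $f$ expands $\cF$, every $m\le n$ gives $|f^m(J)|_\cF\le |f^n(J)|_\cF=|I|_\cF\le 1$, with equality only when $m=n$ and $|I|_\cF=1$. So the first time $f^k(J)$ reaches length at least $1$ is $k=n+n_0(I)$. (If $|I|_\cF=1$ exactly, both sides give $n_0(J)=n$, consistent with $n_0(I)=0$.) Hence
$$
n_0(J)=n+n_0(I).
$$

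Now apply Lemma \ref{p.ac1} twice. For $J\subset D_1$ it gives
$$
\nu_1(J)\asymp_L \frac{e^{-(n+n_0(I))H_\cF}}{e^{-n_0(D_1)H_\cF}}.
$$
For $I\subset D_2$ it gives
$$
\nu_2(I)\asymp_L \frac{e^{-n_0(I)H_\cF}}{e^{-n_0(D_2)H_\cF}}.
$$
Dividing the first by the second, the factor $e^{-n_0(I)H_\cF}$ cancels and we obtain
$$
\frac{(f^n)_*\nu_1(I)}{\nu_2(I)}\asymp_{L^2} e^{-nH_\cF}\cdot\frac{e^{-n_0(D_2)H_\cF}}{e^{-n_0(D_1)H_\cF}}.
$$
This holds uniformly over all sub $\cF$-disks $I\subset D_2$.

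It remains to pass from intervals to the density. Sub-intervals of the one-dimensional disk $D_2$ generate its Borel $\sigma$-algebra, and neither measure has atoms (Lemma \ref{p.ac1}). A ratio bound $c\le \mu(I)/\nu(I)\le C$ valid on all intervals therefore extends to finite disjoint unions of intervals, and then to all Borel sets by outer regularity. This gives mutual absolute continuity with $d\mu/d\nu\in[c,C]$ almost everywhere, which is the claimed bound on $\frac{d (f^n)_*\nu_1}{d\nu_2}$.

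The main point needing care is the identity $n_0(f^{-n}I)=n+n_0(I)$, particularly the boundary case where some iterate has length exactly $1$; the rest is routine.
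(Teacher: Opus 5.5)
Your proof is correct and matches the paper's argument: apply the Gibbs property (Lemma \ref{p.ac1}) to $I\subset D_2$ and to $f^{-n}(I)\subset D_1$, use $n_0(f^{-n}(I))=n+n_0(I)$, and divide so that $e^{-n_0(I)H_\cF}$ cancels. You also justify two steps the paper leaves implicit. One is the identity for $n_0(f^{-n}(I))$. The other is the passage from the interval-wise ratio bound to the Radon--Nikodym bound; note that this second step runs through countable disjoint unions of intervals (i.e., open sets) before outer regularity, not just finite unions.
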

\begin{proof}
	Let $I\subset D_2$ be any $\cF$-disk. Let $\nu_1\in D_1,\nu_2\in D_2$ be arbitrary. By Lemma \ref{p.ac1} we have
	\begin{equation}\label{e.c1}
		\nu_2(I)\asymp_L\frac{e^{-n_0(I)H_\cF}}{e^{-n_0(D_2)H_\cF}}.
	\end{equation}
	On the other hand, $f^{-n}(I)\subset f^{-n}(D_2)\subset D_1$ is also an $\cF$-disk. Consequently, Lemma \ref{p.ac1} gives 
	\begin{equation*}
		\nu_1(f^{-n}(I))\asymp_L\frac{e^{-n_0(f^{-n}(I))H_\cF}}{e^{-n_0(D_1)H_\cF}}.
	\end{equation*}
	Note that $n_0(f^{-n}(I)) = n + n_0(I)$, so the above equality becomes 
	\begin{equation}\label{e.c2}
		(f^n)_*\nu_1(I)\asymp_L\frac{e^{-(n+n_0((I)))H_\cF}}{e^{-n_0(D_1)H_\cF}}.
	\end{equation}
	Combining \eqref{e.c1} and \eqref{e.c2}, we conclude
	\begin{align*}
		\frac{(f^n)_*\nu_1(I)}{\nu_2(I)}&\asymp_{L^2}\frac{e^{-(n+n_0((I)))H_\cF}}{e^{-n_0(D_1)H_\cF}} \cdot \frac{e^{-n_0(D_2)H_\cF}}{e^{-n_0(I)H_\cF}} \\
		&\asymp_{L^2} e^{-nH_\cF} \cdot \frac{e^{-n_0(D_2)H_\cF}}{e^{-n_0(D_1)H_\cF}}.
	\end{align*}
	Now the lemma follows by noting that the right-hand side does not depend on the choice of $I$.
\end{proof}

\begin{proof}[Proof of Theorem \ref{t.properties}]
	The invariance follows from Lemma \ref{l.inv}. The Gibbs property is Lemma \ref{p.ac1}. The uniform equivalences of measures on the intersection is Lemma \ref{p.ac2}. Finally, the conformal property is proven in Lemma \ref{l.jac}. We conclude the proof of Theorem \ref{t.properties}.
\end{proof}

\section{Weak Margulis measures and measures of maximal $\cF$-entropy}\label{s.3}
\subsection{Setup}\label{ss.partition}
We let $\mu$ be a measure of maximal $\cF$-entropy, whose existence is given by \cite[Theorem A]{Y16} and \cite[Proposition 2.15]{HHW}. From the discussion in Section \ref{ss.construction}, we have 
$$
h^\cF_{\mu}(f) = h^\cF_{top}(f) = H_\cF.
$$

Next we consider the conditional measures of $\mu$. For definiteness, we will fix a measurable partition which we will describe below.
We let $\cB_{k}, k=1,\cdots, N_0$ be a family of foliation boxes that cover $\bM$. By cutting each $\cB_{k}$ into three pieces whenever $\cB_{k}\cap \cB_{j}\ne\emptyset$, we obtain a finite measurable partition $\cA = \{A_k:k=1,\cdots, N_1\}$ with the property that $\mu(\partial A_k) = 0$ for every $k$. {We will also assume that $\cA$ satisfies the small boundary condition as in \cite{Y16}.} Elements of $\cA$ have a form of product structure: there exists $a>0$ such that for every $x\in \bM$, denoting by $\cF_\cA(x)$ the connected component of $\cF(x)\cap \cA(x)$ that contains $x$, one has 
\begin{equation}\label{e.size}
	a\le|\cF_\cA(x)|_\cF \le 1, \forall x\in\bM.
\end{equation} 

In each $A_k$, the collection $\{\cF_\cA(x): x\in A_k\}$ form a measurable partition of $A_k$. This gives a measurable partition of $\bM$, which we denote by $\cF_\cA$. We then obtain a family of conditional measures of $\mu$, which we denote by $\mu^{\cF,\cA}_x$. On the other hand, each $\cF_\cA(x)$ is an $\cF$-disk with length at most one, and therefore we can define a family of reference measures on $\cF_\cA(x)$, denoted by $\Gamma(\cF_\cA(x))$, following Definition \ref{d.G}. 

This definition only depends on the disk $\cF_\cA(x)$ and not on the point in the sense that if $y\in \cF_\cA(x)$ then $\Gamma(\cF_\cA(y)) = \Gamma(\cF_\cA(x))$. Note that each measure in $\Gamma(\cF_\cA(x))$ is a probability measure with full support and no atom, thanks to Theorem \ref{t.properties}.  We have the following proposition which is the counterpart of Proposition \ref{p.ac1} on $\cF_\cA(x)$. Since the length of $\cF_\cA(x)$ is bounded away from zero, by Theorem \ref{t.properties} (2) we obtain:

%

\begin{proposition}\label{p.ac4}
	There exists a constant $\widetilde L$ such that for any $x\in\bM$, any $\cF$-disk $I\subset \cF_\cA(x)$ and any $\nu\in \Gamma(\cF_\cA(x))$, we have  
	$$
	\nu(I)\asymp_{\widetilde L} e^{-n_0(I) H_\cF}.
	$$
\end{proposition}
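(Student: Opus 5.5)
The plan is to apply the Gibbs property of Theorem~\ref{t.properties}~(2), equivalently Lemma~\ref{p.ac1}, with $D=\cF_\cA(x)$. By \eqref{e.size} the disk $D$ satisfies $|D|_\cF\le 1$, so Lemma~\ref{p.ac1} applies and gives, for every sub $\cF$-disk $I\subset D$ and every $\nu\in\Gamma(D)$,
$$
\nu(I)\asymp_L \frac{e^{-n_0(I)H_\cF}}{e^{-n_0(D)H_\cF}} = e^{-n_0(I)H_\cF}\, e^{n_0(D)H_\cF}.
$$
It then remains to show that the normalizing factor $e^{n_0(D)H_\cF}$ is bounded above and below by constants that do not depend on $x$. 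After that we take $\widetilde L = L\cdot e^{N H_\cF}$, where $N$ is a uniform bound for $n_0(\cF_\cA(x))$.

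For the bound on $n_0(D)$: trivially $n_0(D)\ge 0$, so $e^{n_0(D)H_\cF}\ge 1$. For the upper bound we use the expansion \eqref{e.exp}. Since $m(Df|_{T\cF})>\lambda$, for every $k$ we have
$$
|f^k(D)|_\cF\ge \lambda^k |D|_\cF\ge \lambda^k a .
$$
Hence $n_0(D)\le N:=\lceil \log(1/a)/\log\lambda\rceil$, which is uniform in $x\in\bM$. This gives
$$
1\le e^{n_0(D)H_\cF}\le e^{NH_\cF}.
$$

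Combining the two displays yields $\nu(I)\asymp_{\widetilde L} e^{-n_0(I)H_\cF}$ with $\widetilde L = Le^{NH_\cF}$.

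There is no real obstacle here. The only point needing care is that the constant must be uniform in $x$. This is exactly what the lower bound $a$ in \eqref{e.size} provides, and it is where the product structure of the partition $\cA$ is used.
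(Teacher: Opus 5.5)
Your proposal is correct and matches the paper's proof: apply the Gibbs property with $D=\cF_\cA(x)$, then use the lower bound $a$ in \eqref{e.size} to get a uniform bound $n_0(\cF_\cA(x))\le N$, which gives $\widetilde L = L e^{NH_\cF}$. Your explicit choice $N=\lceil \log(1/a)/\log\lambda\rceil$, obtained from the expansion rate in \eqref{e.exp}, just spells out a step the paper leaves implicit.
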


\begin{proof}
	This is a simple consequence of the Gibbs property (Theorem \ref{t.properties} (2)) and \eqref{e.size}, which indicates that $0\le \sup_xn_0(\cF_\cA(x))\le N$ for some constant $N\in\NN$. One can take $\widetilde L = L\cdot e^{NH_\cF}$ where $L>1$ is the constant given by Theorem \ref{t.properties}.
\end{proof}

The main result of this section is the following theorem. 

\begin{theorem}\label{t.1}
	Let $\cF$ be an expanding foliation for a $C^{1+\alpha}$ diffeomorphism $f$ with homogeneous growth, and $\mu$ be any ergodic measure of maximal $\cF$-entropy. Then, for $\mu$-almost every point $x$, $\mu^{\cF,\cA}_x$ and any $\nu\in \Gamma(\cF_\cA(x))$ are equivalent to each other, with Radon-Nikodym derivative within $[L_1^{-1}, L_1]$ for some constant $L_1>1$ independent of $x$. {In particular, the support of every ergodic measure of maximal $\cF$-entropy consists of entire $\cF$-leaves.}
\end{theorem}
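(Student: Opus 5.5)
The plan is to compare $\mu$ with the weak Margulis measures through a leafwise partition refined by the dynamics, and to use the maximality of $h^\cF_\mu$ to force equivalence. Write $\xi = \cF_\cA$ and $\xi_n = f^{n}\xi$, the refinement whose atoms are $\xi_n(x) = f^{n}(\cF_\cA(f^{-n}x))\cap \cF_\cA(x)$. Standard leafwise entropy theory (as in \cite{Y16}, which uses $C^{1+\alpha}$ and the small boundary condition) gives
$$
H_\cF = h^\cF_\mu(f) = \frac1n H_\mu(\xi_n\mid \xi) \quad\text{and}\quad -\frac1n\log \mu^{\cF,\cA}_x(\xi_n(x))\to H_\cF \ \ \mu\text{-a.e.}
$$
On the reference side, each atom $\xi_n(x)$ is a sub $\cF$-disk $I$ of $\cF_\cA(x)$ with $f^{-n}(I)\subset \cF_\cA(f^{-n}x)$, and $|f^{-n}I|_\cF\ge a$ up to boundary effects. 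Hence $n_0(I) = n + O(1)$ and, by Proposition~\ref{p.ac4},
$$
\nu(\xi_n(x)) \asymp e^{-nH_\cF}
$$
uniformly. The atoms near $\partial \cA$ are shorter; the small boundary condition controls them.

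Next I would define the densities $\rho_x = d\mu^{\cF,\cA}_x/d\nu$ where they exist, and prove equivalence via the entropy (Jensen) argument. Since $\nu(\xi_n(x))\approx e^{-nH_\cF}$ for all atoms, the function $\log\bigl(\nu(\xi_n(x))/\mu^{\cF,\cA}_x(\xi_n(x))\bigr)$ has $\mu$-integral
$$
-nH_\cF + H_\mu(\xi_n\mid\xi) + O(1) = O(1).
$$
By Jensen's inequality, $\int \log(\nu/\mu_x)\,d\mu \le \log \int (\nu/\mu_x)\,d\mu_x \le 0$. Equality up to $O(1)$ uniformly in $n$ should force $\mu_x$ and $\nu$ to be comparable on every atom of every $\xi_n$.

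To make this rigorous I would build the invariant family $\tilde\nu_x := \nu_x/\nu_x(\cF_\cA(x))$, adjusted via the conformal property (Theorem~\ref{t.properties}(4)). It satisfies a near-Jacobian relation: $f^n_*\tilde\nu_{f^{-n}x}$ restricted to $\xi_n(x)$ equals $\tilde\nu_x$ up to the factor $e^{-nH_\cF}$ and a bounded multiplicative error $L^2$. I would then follow Ledrappier's proof of the entropy characterization of SRB/Gibbs $u$-states, replacing Lebesgue by $\nu$ and the log-Jacobian by the constant $H_\cF$. This uses the strict concavity of $\log$ applied to $\mu$ versus the measure $\tilde\mu := \int \tilde\nu_x\,d\mu(x)$ built from $\nu$. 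The idea is to show that the "$\nu$-Jacobian" entropy $H_\cF$ is an upper bound for $h^\cF_\mu$, attained only when $\mu_x = \rho_x\nu$ with $\rho_x$ satisfying a cocycle relation. The bounded-distortion constants $L,L^2,L^4$ then transfer to a uniform bound $\rho_x\in[L_1^{-1},L_1]$. This works by showing that $\rho_x$ is given by a ratio of limits of products of Jacobian ratios, each within $[L^{-2},L^2]$, and that the telescoping errors do not accumulate because the comparison is always made against a single $\nu$ via the Gibbs estimate rather than iterated.

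The main obstacle is this last step. The Radon–Nikodym derivative between $f^n_*\nu$ and $\nu$ is only constant up to a bounded factor, not exactly, so Ledrappier's exact-equality argument does not apply verbatim. I would first try a two-sided martingale argument. The ratios $\mu_x(\xi_n(x))/\nu(\xi_n(x))$ form a martingale-like sequence in $n$, since $\xi_n$ is increasing. Their logarithms have uniformly bounded expectation by the entropy identity, and Fatou plus the Gibbs bounds should then give a.e. convergence to $\rho_x$ with uniform bounds. The final assertion follows quickly once equivalence is in hand. Since $\supp\nu = \cF_\cA(x)$ by Theorem~\ref{t.properties}(2), $\supp \mu$ contains $\cF_\cA(x)$ for a.e. $x$. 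By invariance and expansion, $f^n\cF_\cA(f^{-n}x)$ then exhausts the whole leaf, so $\supp\mu$ contains entire leaves.
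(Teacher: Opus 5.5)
\textbf{There is a genuine gap: the step that should produce the uniform bound $[L_1^{-1},L_1]$ does not work.} That step would also give the reverse absolute continuity $\nu\ll\mu^{\cF,\cA}_x$, which your support argument needs, so that is missing too. Your Jensen computation yields at most that the averaged relative entropy $\int\sum_A\mu_x(A)\log\frac{\mu_x(A)}{\nu_x(A)}\,d\mu(x)$, summed over atoms $A$ of the $n$-th refinement, is bounded uniformly in $n$. A bounded relative entropy does not force comparability on atoms. Take $\nu$ Lebesgue on $[0,1]$:
the density $\rho(t)=\tfrac12 t^{-1/2}$ has finite relative entropy, yet $\mu([0,2^{-n}])/\nu([0,2^{-n}])=2^{n/2}\to\infty$. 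The density $\rho=2\cdot\mathbf 1_{[1/2,1]}$ has relative entropy $\log 2$, yet $\nu\not\ll\mu$.

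Your martingale $\mu_x(\xi_n(y))/\nu_x(\xi_n(y))$ has the same defect. A bounded expected logarithm plus Fatou gives integrability of $\rho_x\log\rho_x$, never an $L^\infty$ bound or a positive lower bound. The Gibbs estimates you invoke control $\nu$, not $\mu_x$. Ledrappier's argument works only because there the reference family is exactly invariant and the equality case of Jensen is exact. Here Theorem~\ref{t.properties}(4) gives the Jacobian only up to $L^2$, and, as you noticed, iterating it loses control. Nothing in the proposal replaces that mechanism.

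\textbf{The missing ingredient is the paper's Sinai-type averaging (Proposition~\ref{p.2}).}
\begin{enumerate}
\item Once $\mu_y\ll\nu_y$ holds for some typical $y$, a compact set $K\subset\cF_\cA(y)$ of Birkhoff-generic points has $\nu_y(K)>0$.
\item Then $\frac1n\sum_{k<n}f^k_*(\nu_y|_{U_k})\to\mu$, where $U_k\supset K$ is the union of the $f^{-k}(\cF_\cA(z_j))$ over the full local leaves $\cF_\cA(z_j)$ meeting $f^k(K)$.
\item Each piece $f^k_*(\nu_y|_{f^{-k}\cF_\cA(z_j)})$ is compared \emph{in a single step}, not iteratively, with an element of $\Gamma(\cF_\cA(z_j))$ via Theorem~\ref{t.properties}(1) and (3). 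Hence it obeys the Gibbs bound of Proposition~\ref{p.ac4} with a constant independent of $k$.
\item Such bounds survive convex combinations, the weak-$*$ limit and the disintegration. This gives $\mu_x(I)\asymp e^{-n_0(I)H_\cF}$, and therefore both directions of the equivalence with a uniform constant.
\end{enumerate}

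\textbf{The first half of your argument is salvageable.} Once repaired, it is a legitimate alternative to the paper's bad-set/Key-Lemma route to $\mu_x\ll\nu_x$. Finite relative entropy along a generating sequence of partitions forces absolute continuity, and you would not need the invariance of $\cB_\infty$. The repairs are as follows.
\begin{enumerate}
\item Since $f$ expands $\cF$, the refinement must be $\bigvee_{j=0}^n f^{-j}\cF_\cA$. Your $f^n\xi$ has atoms essentially equal to $\cF_\cA(x)$, and $|f^{-n}I|_\cF\ge a$ is impossible when $|I|_\cF\le 1$.
\item $\frac1n H_\mu(\xi_n|\xi)$ does not equal $h^\cF_\mu$ for this non-increasing partition. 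What you need, $nH_\cF\le H_\mu(\xi_n|\xi)\le nH_\cF+L_2$, follows from Lemmas~\ref{l.mono} and~\ref{l.apbd}.
\item Atoms of the join can be arbitrarily thin slivers, so $\nu_x(\xi_n(x))\asymp e^{-nH_\cF}$ fails uniformly. You instead need the integrated bound $\int -\log\nu_x(\xi_n(x))\,d\mu(x)\le nH_\cF+C$. This requires quantitative control of $\mu$ near $\partial\cA$, for instance via the small boundary condition. The paper sidesteps the issue by counting atoms (Lemma~\ref{l.count}) rather than bounding their masses.
\end{enumerate}
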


\begin{remark}
	While the construction and Theorem \ref{t.properties} require $f$ to be $C^1$, here the $C^{1+\alpha}$ regularity is needed. This is because of Lemma \ref{l.b1} which requires a distortion control.
\end{remark}

The proof of Theorem \ref{t.1} occupies the rest of this section. 

\subsection{The proof of $\mu_x^{\cF,\cA}\prec \nu$}\label{ss.3.2}
First we show that  $\mu_x^{\cF,\cA}\prec \nu$. This will be needed later when we establish the other direction of the absolute continuity, and obtain the uniform estimate for the Radon-Nikodym derivative; see Proposition \ref{p.2}.
	
Henceforth, to simplify notation we will write $\mu_x = \mu_x^{\cF,\cA}$ since $\cF,\cA$ have been fixed throughout.	We start with the following lemma regarding the monotonicity of partial $\cF$-entropy. For any invariant probability measure $\xi$, we denote by $h^\cF_\xi(f)$ its {\em partial $\cF$-entropy}; for further discussions, see \cite{Y16} and \cite{HHW}.
\begin{lemma}\label{l.mono}
	Let $\xi$ be any ergodic invariant probability. Then the sequence
	$$
	\frac1n H_\xi\left(\bigvee_{j=1}^n f^{-j}(\cF_\cA)\Big|\cF_\cA\right) 
	$$ 
	decreases monotonically to $h_\xi^\cF(f)$.
\end{lemma}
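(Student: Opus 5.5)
Write $\eta=\cF_\cA$ and $\eta_n=\bigvee_{j=0}^{n-1} f^{-j}\eta$, so that $\bigvee_{j=1}^n f^{-j}\eta \vee \eta = \eta_{n+1}$. Put $a_n := H_\xi(\eta_n\mid \eta)$. The quantity in the lemma is then $\frac1n a_{n+1}$. I would first show that $(a_n)$ is subadditive. The key observation is that $f^{-1}\eta \ge \eta$ up to boundary effects. Indeed, $\eta$ is subordinate-like along $\cF$, since its atoms are $\cF$-plaques of length $\le 1$, and $f$ expands $\cF$. Hence $f^{-1}\eta$ restricted to an $\eta$-atom refines it into plaques of the form $f^{-1}(\cF_\cA(f y))\cap \cF_\cA(y)$.

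\textbf{Monotonicity.} The conditional chain rule gives
$$
a_{m+n}=H_\xi(\eta_m\mid\eta)+H_\xi(f^{-m}\eta_n\mid \eta_m).
$$
Since $\eta_m\ge f^{-m}\eta$ and $\xi$ is invariant, the second term is at most $H_\xi(f^{-m}\eta_n\mid f^{-m}\eta)=a_n$. So $a_{m+n}\le a_m+a_n$, and Fekete's lemma shows that $a_n/n$ converges to $\inf_n a_n/n$.

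To get actual monotone decrease I would use a finer identity. The increments $b_k := a_{k+1}-a_k = H_\xi(f^{-k}\eta\mid \eta_k)$ are nonincreasing in $k$. This holds because
$$
H_\xi(f^{-k}\eta\mid\eta_k)= H_\xi(\eta\mid f\eta_k),\qquad f\eta_k=\bigvee_{j=-1}^{k-2} f^{-j}\eta,
$$
and the latter partitions increase with $k$. Conditional entropy decreases under refinement of the conditioning partition, which gives $b_{k+1}\le b_k$. A sequence with nonincreasing increments has averages $\frac1n\sum_{k<n} b_k$ that are nonincreasing. Applied to $\frac1n a_{n+1}=\frac1n\sum_{k=1}^{n} b_k$ (using $a_1=0$), this gives monotone decrease of exactly the stated sequence.

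\textbf{Identifying the limit.} The limit is $\lim b_k=H_\xi(\eta\mid \bigvee_{j\ge -1} f^{-j}\eta)$, by the increasing martingale theorem. This requires the entropies to be finite. Finiteness follows from the small boundary condition on $\cA$ and the uniform expansion bounds \eqref{e.exp}: each $\eta$-atom is cut by $f^{-1}\eta$ into a bounded number of pieces (at most about $K_f/a+2$), so $b_1<\infty$.

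It remains to identify this limit with $h^\cF_\xi(f)$. Here I would invoke the definition of partial entropy in \cite{Y16,HHW}: $h^\cF_\xi(f)=H_\xi(f^{-1}\zeta\mid\zeta)$ for an increasing partition $\zeta$ subordinate to $\cF$, and it is independent of the choice. A natural such partition is $\zeta=\bigvee_{j\ge 0} f^{j}\eta$, using that $f\eta$ is coarser along leaves. After reindexing by invariance, this matches the martingale limit $\lim_k H_\xi(\eta\mid f\eta_k)$. \cite[Lemma/Prop.\ on $h^\cF_\xi=\lim\frac1nH(\eta_n\mid\eta)$]{HHW} states exactly that the limit of $a_n/n$ equals $h^\cF_\xi$ when $\eta$ has small boundary, so I would cite it.

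The main obstacle is the refinement claim: that $f^{-1}\eta$ refines $\eta$, or more precisely that $\eta_k$ and $f\eta_k$ are nested in the right direction. Atoms of $\eta$ are plaques inside boxes, not whole leaves, so the nesting holds only modulo the leafwise boundary pieces. I would handle this by working with the partition into local leaves within each box, so that all partitions in question are measurable partitions finer than the leaf partition. The conditional entropy monotonicity used above then only needs $\eta\vee f^{-1}\eta$ relations, which hold exactly.
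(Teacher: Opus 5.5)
Your plan is the paper's. You split $H_\xi(\eta_{n+1}\mid\eta)$ by the chain rule into increments $b_k=H_\xi(f^{-k}\eta\mid\eta_k)$. You then use invariance to rewrite each increment as a conditional entropy of $\eta$ given an increasing family of partitions, conclude that the Ces\`aro averages decrease, and identify the limit by martingale convergence. But the invariance step is mis-indexed, and as written it breaks the proof.

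\begin{itemize}
\item Applying $f$ turns $H_\xi(f^{-k}\eta\mid\eta_k)$ into $H_\xi(f^{-(k-1)}\eta\mid f\eta_k)$, not into $H_\xi(\eta\mid f\eta_k)$.
\item Your $f\eta_k=\bigvee_{j=-1}^{k-2}f^{-j}\eta$ contains $\eta$ itself once $k\ge 2$. So your right-hand side is $0$, and so is your limit $H_\xi(\eta\mid\bigvee_{j\ge -1}f^{-j}\eta)$.
\end{itemize}

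Read literally, the proposal yields $h^\cF_\xi(f)=0$. You must push forward by $f^k$ instead:
\[
b_k=H_\xi\bigl(\eta\mid f^k\eta_k\bigr)=H_\xi\Bigl(\eta\Bigm|\bigvee_{i=1}^{k}f^{i}\eta\Bigr).
\]
These conditioning partitions increase with $k$, so $b_k$ is nonincreasing. Since $b_1<\infty$, the martingale theorem gives $b_k\downarrow H_\xi(\eta\mid\bigvee_{i\ge1}f^{i}\eta)$. For your $\zeta=\bigvee_{j\ge0}f^j\eta$ this limit equals $H_\xi(f^{-1}\zeta\mid\zeta)=H_\xi(\eta\mid f\zeta)$. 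With this correction your argument is exactly the paper's.

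Second, discard the refinement claims rather than repair them.

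\begin{itemize}
\item The subadditivity step needs $\eta_m\ge f^{-m}\eta$, which is equivalent to $b_m=H_\xi(f^{-m}\eta\mid\eta_m)=0$. The $b_k$ are nonnegative, nonincreasing and tend to $h^\cF_\xi(f)$, so this forces $h^\cF_\xi(f)=0$. The step therefore fails exactly in the case of interest.
\item That step also runs opposite to your own heuristic $f^{-1}\eta\ge\eta$. The heuristic is itself false in general: $\cA$ has no Markov property, and $f^{-1}(\cF_\cA(fy))$ can straddle an endpoint of $\cF_\cA(y)$.
\end{itemize}

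None of this is needed. Fekete's lemma is superfluous, and the ``main obstacle'' you flag does not exist. The corrected argument uses only the chain rule, invariance of $\xi$, monotonicity of conditional entropy under refinement of the conditioning partition, and $b_1=H_\xi(\eta\mid f\eta)<\infty$. No nesting between $\eta$ and $f^{\pm1}\eta$ enters, so your last paragraph can be dropped.

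Your count of at most about $K_f/a+2$ pieces per atom does prove $b_1<\infty$, which the paper merely asserts. That count uses only \eqref{e.exp} and \eqref{e.size}. The small boundary condition is needed elsewhere: it makes $\zeta$ subordinate to $\cF$, which the identification of the limit with $h^\cF_\xi(f)$ requires.
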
	
\begin{proof}
	The idea is taken from \cite[Proposition 4.1]{Y16}. We write
	\begin{align*}
		&H_\xi\left(\bigvee_{j=1}^n f^{-j}(\cF_\cA)\Big|\cF_\cA\right)\\ &= H_\xi\left(f^{-1}(\cF_\cA)|\cF_\cA\right) + H_\xi\left(f^{-2}\cF_\cA|\cF_\cA\vee f^{-1}(\cF_\cA)\right)+\cdots\\
		&\hspace{1cm} + H_\xi\left(f^{-n}(\cF_\cA)\Big|\bigvee_{j=0}^{n-1}f^{-j}(\cF_\cA)\right)\\
		&=H_\xi\left(\cF_\cA|f(\cF_\cA)\right) + H_\xi\left(\cF_\cA|f(\cF_\cA)\vee f^2(\cF_\cA)\right)+\cdots\\
		&\hspace{1cm} + H_\xi\left(\cF_\cA\Big|\bigvee_{j=1}^{n}f^{j}(\cF_\cA)\right).
	\end{align*}
	We note that $\bigvee_{j=1}^{n}f^{j}(\cF_\cA)$ is an increasing sequence of partitions. Furthermore, $H_\xi(\cF_\cA|f(\cF_\cA))<\infty$. By the Martingale Convergence Theorem (alternatively, see \cite[Section 5]{Rokhlin}), we get that 
	$$
	H_\xi\left(\cF_\cA\Big|\bigvee_{j=1}^{n}f^{j}(\cF_\cA)\right)\searrow  H_\xi\left(\cF_\cA\Big|\bigvee_{j=1}^{\infty}f^{j}(\cF_\cA)\right)=h_\xi^\cF(f).
	$$
	It then follows that $\frac1n H_\xi\left(\bigvee_{j=1}^n f^{-j}(\cF_\cA)\Big|\cF_\cA\right)$ also monotonically decreases to $h_\xi^\cF(f)$. 
\end{proof}

Now let $\mu$ be an ergodic measure of maximal $\cF$-entropy. We need the following {\em a priori} bound on $H_\mu\left(\bigvee_{j=1}^n f^{-j}(\cF_\cA)\Big| \cF_\cA\right)$, {which satisfies 
\begin{align*}
	H_\mu\left(\bigvee_{j=1}^n f^{-j}(\cF_\cA)\Big|\cF_\cA\right) &= 	\int H_{\mu}\left(\bigvee_{j=1}^n f^{-j}(\cF_\cA)\Big|\cF_\cA\right)\,d\mu(x)\\
	&=	\int H_{\mu_x}\left(\bigvee_{j=1}^n f^{-j}(\cF_\cA)\right)\,d\mu(x).
\end{align*}
}

\begin{lemma}\label{l.apbd}
	There exists a constant $L_2>0$ such that for any $n\in\NN$ and $\mu$-almost every $x\in \bM$, one has 
	$$
	H_{\mu_x}\left(\bigvee_{j=1}^n f^{-j}(\cF_\cA)\right) \le nH_\cF + L_2.
	$$
\end{lemma}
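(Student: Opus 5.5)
The bound rests on the fact that the entropy of a measure is at most the logarithm of the number of atoms it charges. So the plan is to bound the number of elements of the partition $\bigvee_{j=1}^n f^{-j}(\cF_\cA)$ that meet a single plaque $\cF_\cA(x)$, and show this number is at most $e^{nH_\cF+L_2}$ uniformly in $x$. Since $\mu_x$ is supported on $\cF_\cA(x)$, that immediately gives $H_{\mu_x}\bigl(\bigvee_{j=1}^n f^{-j}(\cF_\cA)\bigr)\le nH_\cF+L_2$.

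\textbf{Identifying the atoms.} An atom of $\bigvee_{j=1}^n f^{-j}(\cF_\cA)$ restricted to $\cF_\cA(x)$ is a set of points $y\in\cF_\cA(x)$ such that, for each $j\le n$, $f^j(y)$ lies in a prescribed plaque $\cF_\cA(z_j)$. Each $\cF_\cA(z)$ is a connected subarc of a leaf, so $f^{-j}(\cF_\cA(z_j))\cap \cF_\cA(x)$ is a subarc. Intersections of subarcs of a one-dimensional disk are again subarcs. Hence each atom meeting $\cF_\cA(x)$ is an interval $J\subset\cF_\cA(x)$. Moreover, $f^n(J)$ lies inside the single plaque $\cF_\cA(z_n)$, and all the $f^n(J)$ are disjoint subarcs of $f^n(\cF_\cA(x))$. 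The atoms also cover $\cF_\cA(x)$.

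\textbf{Counting the atoms.} Fix $j$. By \eqref{e.size}, every plaque has length at least $a$. So the curve $f^j(\cF_\cA(x))$ is cut by the partition $\cF_\cA$ into at most $|f^j(\cF_\cA(x))|_\cF/a+2$ pieces: all pieces except the two end pieces are full plaques of length $\ge a$.

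This gives a bound on the cut points. The number of cut points in $\cF_\cA(x)$ created at time $j$ is at most $|f^j(\cF_\cA(x))|_\cF/a+1$.

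Now estimate that length. Since $|\cF_\cA(x)|_\cF\le 1$, extend or shrink to compare with a disk of length in $[1,K_f)$. Concretely, $\cF_\cA(x)$ sits inside an $\cF$-disk $D'$ of length exactly $1$ (enlarging within the leaf), and homogeneous growth (Definition~\ref{d.HG}) gives $|f^j(\cF_\cA(x))|_\cF\le|f^j(D')|_\cF\le C_Ge^{jH_\cF}$.

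The atoms are the intervals between all cut points from all times $j=1,\dots,n$. So their number is at most
\[
1+\sum_{j=1}^n\Bigl(C_Ga^{-1}e^{jH_\cF}+1\Bigr)\le C e^{nH_\cF}+n+1,
\]
using the geometric sum $\sum_{j\le n}e^{jH_\cF}\le e^{nH_\cF}/(1-e^{-H_\cF})$. Since $H_\cF>0$, the term $n+1$ is absorbed into a constant multiple of $e^{nH_\cF}$. Taking logarithms yields the claim with $L_2=\log\bigl(C_Ga^{-1}(1-e^{-H_\cF})^{-1}+2\bigr)$ or similar.

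\textbf{Where care is needed.} The main subtlety is the claim that each plaque of $\cF_\cA$ met by $f^j(\cF_\cA(x))$ contributes a connected piece, so that the cuts are finite in number and controlled by length. The sets $A_k$ are pieces of foliation boxes, so $\cF_\cA(z)$ is a connected component of $\cF(z)\cap\cA(z)$. A long leaf curve could a priori re-enter the same $A_k$ many times, but each re-entry is a different plaque, hence a separate piece, and each interior piece still has length $\ge a$. So the count by length survives.

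The only genuine care point is boundary behavior of $\cA$. Points of $\partial A_k$ form a $\mu$-null set, which is acceptable for a $\mu$-a.e. statement. Alternatively, one can simply count endpoints of the arcs $f^j(\cF_\cA(x))\cap\cF_\cA(z)$, which is unaffected by boundary issues.
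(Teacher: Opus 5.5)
Your proposal is correct and is essentially the paper's proof: you bound $H_{\mu_x}$ by the logarithm of the number of atoms meeting $\cF_\cA(x)$, count those atoms through the cut points contributed at each time $j$ (the content of Lemma~\ref{l.count}), bound the time-$j$ contribution by $|f^j(\cF_\cA(x))|_\cF/a$ plus a constant, control that length by homogeneous growth, and sum the geometric series --- enlarging $\cF_\cA(x)$ to a unit disk is just a slightly cleaner substitute for the paper's splitting of the sum at $n_0(\cF_\cA(x))$. The only slip is cosmetic: your explicit value $L_2=\log(C_Ga^{-1}(1-e^{-H_\cF})^{-1}+2)$ presumes $n+1\le 2e^{nH_\cF}$, which fails when $H_\cF$ is small, so the $2$ should be replaced by $\sup_{n}(n+1)e^{-nH_\cF}<\infty$.
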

 
For each $j= 1,\cdots, n$ we write $(f^{-j}(\cF_\cA))_{\cF_\cA(x)}$ for the collection of those elements of $f^{-j}(\cF_\cA)$ that intersect with $\cF_\cA(x)$. Similarly, we write $(\cF_\cA^n)_{\cF_\cA(x)}$ for the elements of  $\bigvee_{j=1}^n f^{-j}(\cF_\cA)$ that intersects $\cF_\cA(x)$. The proof of the previous lemma requires the following relation between cardinalities of $(f^{-j}(\cF_\cA))_{\cF_\cA(x)}$  and that of $(\cF_\cA^n)_{\cF_\cA(x)}$.
\begin{lemma}\label{l.count}
	For every $n\in\NN$ it holds that
	$$
	\#(\cF_\cA^n)_{\cF_\cA(x)}\le 2 \sum_{j=1}^{n} \#(f^{-j}(\cF_\cA))_{\cF_\cA(x)}.
	$$
\end{lemma}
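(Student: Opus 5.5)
The plan is to use one-dimensionality: everything restricted to $\cF_\cA(x)$ lives on a compact interval, so elements of partitions restricted to it are finite unions of subintervals. The key observation is that a refinement of finitely many interval partitions of the same arc is controlled by counting endpoints (cut points). An interval cut by $m$ interior points has at most $m+1$ pieces.

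\textbf{Step 1: restriction to one $\cF_\cA$-element.} Fix $x$ and write $D=\cF_\cA(x)$, a compact arc. For each $j$, consider the elements of $f^{-j}(\cF_\cA)$ meeting $D$. Each such element is $f^{-j}(\cF_\cA(y))$, an $\cF$-disk contained in the leaf $\cF(x)$. Its intersection with $D$ is therefore a subarc, since two subarcs of a one-dimensional leaf intersect in an arc. (If the leaf is a circle the intersection could be two arcs, but lengths $\le 1$ and expansion rule this out for a circle leaf long enough; otherwise this is handled as a harmless factor.) Thus $(f^{-j}(\cF_\cA))_{D}$ induces a partition of $D$ into $k_j:=\#(f^{-j}(\cF_\cA))_{D}$ arcs. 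This uses at most $k_j-1$ interior cut points, and at most $2k_j$ endpoints in total.

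\textbf{Step 2: the join.} An element of $\bigvee_{j=1}^n f^{-j}(\cF_\cA)$ meeting $D$ intersects $D$ in an intersection of arcs, one from each level, which is again an arc (possibly degenerate). These arcs partition $D$, and their endpoints lie in the union of the cut points of all levels. Hence the number of pieces is at most
$$1+\sum_{j=1}^n (k_j-1)\le \sum_{j=1}^n k_j \le 2\sum_{j=1}^n k_j .$$
Degenerate single-point pieces coming from boundary conventions of the measurable partition could double the count; this is exactly what the factor $2$ absorbs. Each cut point can create at most one extra point-piece plus one arc, giving the bound $1+2\sum_j(k_j-1)$, which is still at most $2\sum_j k_j$.

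\textbf{Main obstacle.} The main obstacle is Step 1: justifying that each atom of $f^{-j}(\cF_\cA)$ meets $D$ in a single connected arc. This requires that atoms of $\cF_\cA$ are connected plaques, which holds by definition as connected components. Their $f^{-j}$-preimages are connected subarcs of the same leaf, and the intersection of two subarcs of a line leaf is connected. I would phrase the count as counting endpoints, which handles the general case. If connectedness failed, one would count connected components instead, and the factor $2$ again covers the two endpoints per arc.
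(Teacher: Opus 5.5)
Your proposal is correct and follows essentially the paper's argument: restricted to the arc $\cF_\cA(x)$, each $f^{-j}(\cF_\cA)$ is a partition into subarcs, and the pieces of the join are counted through the endpoints (cut points) contributed by all levels. The difference is only bookkeeping. The paper subdivides each level-$n$ atom by the endpoints of the earlier levels, two per atom, which is where its factor $2$ comes from. Your count of the $k_j-1$ boundaries between consecutive pieces at each level, with degenerate pieces included, already gives $1+\sum_j(k_j-1)$, so your hedging about point-pieces is unnecessary. Circle leaves also cannot occur at all, since $f^{-m}$ would shrink a compact leaf below the uniform plaque length of the foliation boxes.
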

\begin{proof}
	{
	In restricting to some $\cF_A(x)$, the partition $\cF^k_\cA = \bigvee_{j=1}^k f^{-j}\cF_\cA$ is obtained by iterating elements of $\cF_\cA$ (which are one-dimensional disks) with the diffeomorphism $f^{-k}$ and  cut it into two pieces whenever it meets an endpoint of some element of $(f^{-j}(\cF_\cA))_{\cF_\cA(x)}$ (which are also one-dimensional disks with two endpoints) for some $j<k$. In this way, each $A\in (f^{-k}(\cF_\cA))_{\cF_\cA(x)}$ is cut into $k(A) + 1$ pieces when $k(A)$ is the total number of endpoints that meet $A$. Furthermore, endpoints counted by $k(A)$ will not meet any other $A'\in (f^{-k}(\cF_\cA))_{\cF_\cA(x)}$. This shows that 
	\begin{align*}
		\#(\cF_\cA^k)_{\cF_\cA(x)}&\le \sum_{A\in (f^{-k}(\cF_\cA))_{\cF_\cA(x)}}(k(A) + 1)\\
		& = \sum_{A\in (f^{-k}(\cF_\cA))_{\cF_\cA(x)}}k(A) + \# (f^{-k}(\cF_\cA))_{\cF_\cA(x)}\\
		&\le  2\sum_{j=1}^{k-1} \#(f^{-j}(\cF_\cA))_{\cF_\cA(x)} +  \# (f^{-k}(\cF_\cA))_{\cF_\cA(x)}\\
		&\le 2 \sum_{j=1}^{k} \#(f^{-j}(\cF_\cA))_{\cF_\cA(x)}.
	\end{align*}	
	
 }
	
%
\end{proof}

Now we are ready to prove Lemma \ref{l.apbd}.
\begin{proof}[Proof of Lemma \ref{l.apbd}]
	Recall that if $\cC$ is a partition, and $\xi$ is a probability measure (not necessarily invariant) for which only finitely many elements of $\cC$ have positive measure, then it holds that 
	$$
	H_\xi(\cC)\le \log\#\cC_\xi
	$$ 
	where $\cC_\xi$ are those elements of $\cC$ whose $\xi$-measures are positive.
	
	We apply this observation with $\xi = \mu_x$ and $\cC_\xi = (\cF_\cA^n)_{\cF_\cA(x)}$, whose cardinality is estimated using Lemma \ref{l.count}:
	\begin{align*}
		&\#(\cF_\cA^n)_{\cF_\cA(x)}\\
		&\le  2 \sum_{j=1}^{n} \#(f^{-j}(\cF_\cA))_{\cF_\cA(x)}.\\
		&\le 2\sum_{j=1}^{n}  \#\left\{\cF_\cA(z): z\in\bM,\cF_\cA(z)\cap f^j(\cF_\cA(x))\ne\emptyset \right\}\\
	\numberthis\label{e.apbd1}	&= 2\left(\sum_{j=1}^{n_0(\cF_\cA(x))-1} + \sum_{j=n_0(\cF_\cA(x))}^{n}\right)\#\left\{\cF_\cA(z): \cF_\cA(z)\subset f^j(\cF_\cA(x)) \right\} + 4.
	\end{align*}
	Here the additional term of $4$ comes from the two endpoints of $f^j(\cF_\cA(x))$ where $f^j(\cF_\cA(x))$ intersects with some $\cF_\cA(z)$ but does not contain them.
	
	\eqref{e.size} implies that $n_0(\cF_\cA(x))$ is bounded by a constant independent of $x$. This allows us to bound the first summation by some constant $c_1>0$ that only depends on $f$ and $\cF_\cA$ but not on $x$ or $n$. For the second summation, note that for each $j\ge n_0(\cF_\cA(x))$, by the homogeneous growth of $\cF$, one has 
	$$
	|f^j(\cF_\cA(x))|_\cF \asymp_{C_G} e^{(j-n_0(\cF_\cA(x)))H_\cF}.
	$$
	On the other hand, each $\cF_\cA(z) \subset f^j(\cF_\cA(x))$ is a $\cF$-disk with length at least $a>0$ due to \eqref{e.size}. We see that 
	\begin{equation}\label{e.a0}
		\#\left\{\cF_\cA(z): \cF_\cA(z)\subset f^j(\cF_\cA(x)) \right\}\le c_2 \frac{e^{(j-n_0(\cF_\cA(x)))H_\cF}}{a}.
	\end{equation}
	Using again the fact that $n_0(\cF_\cA(x))$ is bounded, we obtain from \eqref{e.apbd1} and \eqref{e.a0} and 
	\begin{equation}\label{e.a1}
		\#(\cF_\cA^n)_{\cF_\cA(x)}\le c_1 + c_3 e^{nH_\cF} \le c_4e^{nH_\cF} \,\, \mbox{ for every $n\in\NN$},
	\end{equation}
	where all $c_i$ are positive constants that depend on $\cF$ and $\cA$ but not on $n$ or $x$. Then Lemma \ref{l.apbd} follows by taking logarithm.
	
\end{proof}


We continue our proof of $\mu_x\prec\nu$. Let $\ell>0$; we defined the following ``bad'' set:
	$$
	\cB_\ell = \left\{x\in\bM: \mu_x \mbox{ is well-defined, and }\limsup_{r\to 0} \frac{\mu_x(B^\cF_r(x))}{e^{-n_0(B_r^\cF(x))H_\cF}}	\ge \ell\right\},
	$$
	where $B_r^\cF(x)$ denotes the $\cF$-disk centered at $x$ with radius $r$. 
	
	Note that the numerator, $\mu_x(B^\cF_r(x))$, is a $\mu$-measurable function of $(r,x)$. Meanwhile, $n_0(B_r^\cF(x))$ is a locally constant function of $(r,x)$ whenever $|f^{n_0(B_r^\cF(x))}(n_0(B_r^\cF(x)))|_\cF\ne 1$ and is therefore Borel measurable. It follows that the function $h(r,x):={\mu_x(B^\cF_r(x))}/{e^{-n_0(B_r^\cF(x))H_\cF}}$ is $\mu$-measurable w.r.t.\ $(r,x)$. Let $\pi_x:\RR\times\bM\to\bM$ be the projection to the second coordinate, and  $F_n(x) = \sup_{0<r<1/n} h(r,x)$. Then for each $n\in\NN$ and $a\in\RR$, 
		$$
		\{F_{n}(x)>a\} = \pi_x\left\{(r,x): 0<r<1/n,  h(r,x)>a\right\}
		$$ 
	is the projection of a Borel set in the Polish space $\RR\times \bM$, which must be analytic and therefore universally measurable (see, for instance, \cite{H}). In particular, $\lim_n F_{n}(x)$ is  universally measurable, and so $\cB_\ell = \{x:\lim_n F_n(x)\ge \ell\}$ is universally measurable for each $\ell>0$. Furthermore, $\{\cB_\ell\}_\ell$ is nested in the sense that $\cB_\ell\supset \cB_{\ell'} $ whenever $\ell' > \ell.$ Denote by $\cB_\infty = \bigcap_{\ell\in\NN} \cB_\ell$ and note that it is also universally measurable, and in particular, measurable w.r.t. $\mu$.

\begin{lemma}\label{l.b1}
	$\cB_\infty$ is $f$-invariant, and consequently $\mu(\cB_\infty) = 0$ or $1$.
\end{lemma}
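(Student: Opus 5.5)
We aim to show $x\in\cB_\infty\iff f(x)\in\cB_\infty$ up to a $\mu$-null set. Ergodicity of $\mu$ then gives $\mu(\cB_\infty)\in\{0,1\}$, since $\cB_\infty$ is $\mu$-measurable. The comparison rests on two facts: how the conditional measures $\mu_x$ transform under $f$, and how $n_0$ transforms under $f$.

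\emph{Conditional measures.} The partition $f(\cF_\cA)$ is coarser than $\cF_\cA$ along leaves: each element $f(\cF_\cA(x))$ has length at least $\lambda a$ and contains $f(x)$. Since $\mu$ is invariant, $f_*\mu_x$ is the conditional measure of $\mu$ on the partition $f(\cF_\cA)$. For $\mu$-a.e. $x$, the disk $f(\cF_\cA(x))$ meets $\cF_\cA(f(x))$ in a nondegenerate disk containing $f(x)$ in its relative interior; this uses $\mu(\partial A_k)=0$ and the small boundary condition, so that a.e. point stays a definite leafwise distance from the boundary. On that common disk $J_x$, both $f_*\mu_x$ and $\mu_{f(x)}$ are normalized restrictions of the same measure, namely the conditional measure of $\mu$ on the finer partition $f(\cF_\cA)\vee\cF_\cA$. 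Hence, for small $r$,
$$
\mu_{f(x)}(B)=c(x)\, f_*\mu_x(B)\quad \text{for sub-disks } B\subset J_x,
$$
where $c(x)=\mu_x(f^{-1}J_x)/\mu_{f(x)}(J_x)\in(0,\infty)$ for a.e. $x$.

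\emph{Balls and $n_0$.} Since $f$ is $C^1$ with expansion in $[\lambda,K_f]$, we have $f(B^\cF_r(x))\supset B^\cF_{\lambda r}(f(x))$ and $f(B^\cF_r(x))\subset B^\cF_{K_fr}(f(x))$. The definition gives $n_0(f(I))=n_0(I)-1$ whenever $n_0(I)\ge1$. We also need $n_0(B^\cF_{K_f r}(y))\ge n_0(B^\cF_{r}(y))-N_1$ for a uniform $N_1$. This follows from homogeneous growth: a disk of length $r$ and one of length $K_f r$ centred at the same point reach length $1$ within boundedly many steps of each other. To see this, split the longer disk into at most $\lceil K_f\rceil+1$ pieces of length at most $r$, each of which grows like $e^{nH_\cF}$ once it reaches unit size; then use the Gibbs property (Lemma~\ref{p.gibbs}) to compare the pieces. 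Combining these facts,
$$
\frac{\mu_{f(x)}(B^\cF_{\lambda r}(f(x)))}{e^{-n_0(B^\cF_{\lambda r}(f(x)))H_\cF}}\le c(x)\,e^{C}\,\frac{\mu_x(B^\cF_r(x))}{e^{-n_0(B^\cF_r(x))H_\cF}},
$$
together with a reverse inequality that uses $f(B_r^\cF(x))\subset B^\cF_{K_fr}(f(x))$. Consequently the $\limsup$ is infinite at $x$ iff it is infinite at $f(x)$, and so $\cB_\infty$ is invariant mod $\mu$.

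\emph{Main obstacle.} The hardest part is making the identification of $f_*\mu_x$ with $\mu_{f(x)}$ on $J_x$ precise. This requires the essential uniqueness of disintegrations for the refined partition $\cF_\cA\vee f(\cF_\cA)$, and it requires that for a.e. $x$ the point $f(x)$ is interior to $J_x$. Both are handled by the small boundary property: the set of points within leafwise distance $\rho$ of $\partial\cF_\cA$ has measure tending to $0$, so a.e. point has a positive margin. The constant $c(x)$ may be unbounded, but it is finite a.e., and that is all that is needed for an infinite-versus-finite $\limsup$. The regularity $C^{1+\alpha}$ (bounded distortion) can be invoked if one prefers to compare $n_0$ values via lengths directly rather than through homogeneous growth.
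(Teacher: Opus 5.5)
Your architecture is the paper's. On the overlap $J_x$, $f_*\mu_x$ and $\mu_{f(x)}$ differ by a constant (uniqueness of disintegration); $n_0(f(I))=n_0(I)-1$; the inclusions $B^\cF_{\lambda r}(f(x))\subset f(B^\cF_r(x))\subset B^\cF_{K_fr}(f(x))$ hold; and you need a uniform comparison of $n_0$ for concentric leaf balls of comparable radii.

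\textbf{The gap} is your justification of that last comparison. You claim it follows from homogeneous growth and the Gibbs property, with $C^{1+\alpha}$ distortion as an optional alternative. It does not follow. By Lemma \ref{p.ac1}, for $\nu\in\Gamma(D)$ with $D\supset B^\cF_{K_fr}(y)$, the inequality $n_0(B^\cF_{K_fr}(y))\ge n_0(B^\cF_r(y))-N_1$ is equivalent to $\nu(B^\cF_r(y))\gtrsim\nu(B^\cF_{K_fr}(y))$, which is a doubling property of $\nu$. Cutting $B^\cF_{K_fr}(y)$ into pieces $P_i$ of length at most $r$ gives $\nu(P_i)\asymp e^{-n_0(P_i)H_\cF}$. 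To bound these by $\nu(B^\cF_r(y))$ you need $n_0(P_i)\ge n_0(B^\cF_r(y))-C$, i.e.\ that different disks of the same small length reach unit size at uniformly comparable times. That is the claim you set out to prove, so the argument is circular. Homogeneous growth only constrains disks of length in $[1,K_f)$. It says nothing about how $f^m$, with $m=n_0(B^\cF_{K_fr}(y))$, distributes length inside $B^\cF_{K_fr}(y)$. For a merely $C^1$ map the distortion of $f^m$ there need not be bounded, and then $f^m(B^\cF_r(y))$ can be a tiny fraction of $f^m(B^\cF_{K_fr}(y))$. This is why the paper's remark after Theorem \ref{t.1} says $C^{1+\alpha}$ is needed precisely for this lemma.

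\textbf{The fix} is the route you mention only as optional, and it is required. For $j<m$ one has $|f^j(B^\cF_{K_fr}(y))|_\cF<\lambda^{-(m-1-j)}$. The standard $C^{1+\alpha}$ distortion estimate therefore bounds $\log\bigl(\|Df^m|_{T_p\cF}\|/\|Df^m|_{T_q\cF}\|\bigr)$ uniformly for $p,q\in B^\cF_{K_fr}(y)$. Hence $|f^m(B^\cF_r(y))|_\cF$ is a definite fraction of $|f^m(B^\cF_{K_fr}(y))|_\cF\ge 1$, and $n_0(B^\cF_r(y))\le m+N_1$. This is exactly the paper's \eqref{e.dist}, and with it your proof becomes the paper's.

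Minor points:
\begin{itemize}
\item Proving both directions is unnecessary. The inclusion $\cB_\infty\subset f^{-1}(\cB_\infty)$ mod $0$, together with $\mu(f^{-1}\cB_\infty)=\mu(\cB_\infty)$, already gives invariance mod $0$.
\item $\mu(\partial A_k)=0$ alone ensures that $f(x)$ is interior to $J_x$ for a.e.\ $x$; the small boundary condition is not needed.
\item Your constant is inverted: it should be $c(x)=\mu_{f(x)}(J_x)/\mu_x(f^{-1}J_x)$.
\end{itemize}
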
	
\begin{proof}
	Assume that $x\in \cB_\infty$; there exists $r_n\to 0$ such that 
	$$
	\frac{\mu_x(B^\cF_{r_n}(x))}{e^{-n_0(B_{r_n}^\cF(x))H_\cF}}\to\infty \mbox{ as }n\to\infty.
	$$
	
 	By the uniqueness of the disintegration,  for almost every $y\in \cF_\cA(x)\cap f^{-1}(\cF_\cA(f(x)))$, one has
	$$
	\frac{df_*(\mu_x)}{d \mu_{f(x)}}(y) = c_1
	$$
	where $c_1$ is a constant depending on $\cF_\cA(x)\cap f^{-1}(\cF_\cA(f(x)))$. In particular, this holds for every $y\in B_r^\cF(x)$ provided that $r$ is sufficiently small. We use this to obtain that
	\begin{equation}\label{e.341}
		\frac{\mu_{f(x)}(f(B^\cF_{r_n}(x)))}{e^{-n_0(f(B_{r_n}^\cF(x)))H_\cF}}\to\infty
	\end{equation}
	where we use the fact that $n_0(f(B_{r_n}^\cF(x))) = n_0(B_{r_n}^\cF(x))-1$.
	
	On the other hand, by \eqref{e.exp} we see that 
	\begin{equation}\label{e.342}
	B_{\lambda \cdot r_n}^\cF (f(x))	\subset f(B^\cF_{r_n}(x)) \subset B_{K_f\cdot r_n}^\cF (f(x))
	\end{equation}
	for any $r>0$. 
	Furthermore, since $f$ is $C^{1+\alpha}$, a standard distortion argument gives 
	\begin{equation}\label{e.dist}
		|n_0(B^\cF_{\lambda r}(x)) - n_0(B^\cF_{K_f r}(x))| < c_2
	\end{equation} 
 	for some constant $c_2>0$ independent of $x$ and $r$ (as long as it is small).
 	Combining \eqref{e.342}, \eqref{e.dist} with \eqref{e.341},  we conclude that
 	$$
 	\frac{\mu_{f(x)}(B^\cF_{K_f r_n}(f(x)))}{e^{-n_0(B_{K_f r_n}^\cF(f(x)))H_\cF}}\to\infty,
 	$$
 	and therefore $f(x)\in \cB_\infty$. This finishes the proof of Lemma \ref{l.b1}.
\end{proof}

The rest of the proof is dedicated to show $\mu(\cB_\infty) = 0$. We will assume that this is not the case, and aim to obtain a contradiction with Lemma \ref{l.mono}.
\begin{lemma}[The Key Lemma]\label{l.B}
If $\mu(\cB_\infty) = 1$, then for $\mu$-almost every $x\in\bM$ and any $b>0$, there exists $N= N(x,b)\in\NN$ such that for every $n\ge N(x,b)$, one has 
$$
H_{\mu_x}\left(\bigvee_{j=1}^n f^{-j}(\cF_\cA)\right)\le nH_\cF- b.
$$
\end{lemma}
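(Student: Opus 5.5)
We compare $\mu_x$ with a reference measure $\nu\in\Gamma(\cF_\cA(x))$ on the partition $\cC_n:=(\cF^n_\cA)_{\cF_\cA(x)}$. We first observe that by Proposition \ref{p.ac4} every element $P\in\cC_n$ is an $\cF$-disk with $\nu(P)\asymp_{\widetilde L} e^{-n_0(P)H_\cF}$. Here $n_0(P)\ge n-c$ for a uniform constant $c$, since $f^n(P)$ lies inside a single element of $\cF_\cA$ and so has length at most $1$. Hence $\nu(P)\lesssim e^{-nH_\cF}$ uniformly. Writing
$$
H_{\mu_x}(\cC_n)=\sum_P \mu_x(P)\log\frac{1}{\nu(P)}-\sum_P\mu_x(P)\log\frac{\mu_x(P)}{\nu(P)},
$$
the first sum is at most $nH_\cF+c'$. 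The second sum is the relative entropy of $\mu_x$ with respect to $\nu$ on $\cC_n$. Our goal is to show that this relative entropy tends to $+\infty$ as $n\to\infty$ for $\mu$-a.e.\ $x$ under the hypothesis $\mu(\cB_\infty)=1$. That gives the bound $nH_\cF-b$ for $n\ge N(x,b)$.

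To get the divergence, we use that $\mu(\cB_\infty)=1$ together with the fact that $\mu$ disintegrates along $\cF_\cA$. Then $\mu_x(\cB_\infty)=1$ for $\mu$-a.e.\ $x$. So for $\mu_x$-a.e.\ $y\in\cF_\cA(x)$ we have $\limsup_{r\to0}\mu_x(B^\cF_r(y))/e^{-n_0(B^\cF_r(y))H_\cF}=\infty$; we must switch from $\mu_x$ to $\mu_y$ using $\mu_y=\mu_x$ for $y\in\cF_\cA(x)$. Combined with the Gibbs property this gives $\limsup_{r\to 0}\mu_x(B_r^\cF(y))/\nu(B_r^\cF(y))=\infty$ for $\mu_x$-a.e.\ $y$. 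We would then invoke a Vitali/Besicovitch-type differentiation argument in dimension one for the pair $(\mu_x,\nu)$. Since $\nu$ has no atoms and full support, the infinite upper density forces $\mu_x\perp\nu$, i.e.\ $\mu_x$ is concentrated on a $\nu$-null set $Z_x$.

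Given singularity, we conclude as follows. The partitions $\cC_n$ refine and their mesh goes to zero uniformly by the expansion \eqref{e.exp}, so they generate the Borel $\sigma$-algebra on $\cF_\cA(x)$. By the standard martingale fact (Dobrushin/Pinsker), the relative entropies $\sum_P\mu_x(P)\log(\mu_x(P)/\nu(P))$ are nondecreasing in $n$ and converge to the full relative entropy $D(\mu_x\|\nu)$, which is $+\infty$ because $\mu_x\not\prec\nu$. Hence for any $b$ there is $N(x,b)$ with relative entropy $\ge c'+b$ for $n\ge N$, which yields the claimed inequality.

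The main obstacle is the singularity step: deducing $\mu_x\perp\nu$ from an infinite $\limsup$ defined through the centered balls $B_r^\cF(y)$ and the quantity $e^{-n_0}$ rather than $\nu$ itself. We would handle it with the one-dimensional Besicovitch covering lemma, which holds for arbitrary Radon measures on a curve, in the following form. For each $\ell$ and $\delta$, cover the set of points where the density ratio exceeds $\ell$ at some scale below $\delta$ by balls of bounded multiplicity. This gives $\nu(\{\text{ratio}>\ell\})\le C\ell^{-1}\widetilde L$, while this set has full $\mu_x$-measure. Letting $\ell\to\infty$ produces a $\nu$-null set of full $\mu_x$-measure, which is the desired singularity.
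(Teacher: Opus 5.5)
There is a genuine gap in your first step, the bound $\sum_P\mu_x(P)\log\frac{1}{\nu(P)}\le nH_\cF+c'$. What you actually prove is $n_0(P)\ge n$, i.e.\ $\nu(P)\lesssim e^{-nH_\cF}$. An upper bound on $\nu(P)$ gives a \emph{lower} bound on $\log\frac{1}{\nu(P)}$, so your identity only yields $H_{\mu_x}(\cC_n)\ge nH_\cF-c'-D_n(\mu_x\|\nu)$, which is useless.

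The bound you need is $\nu(P)\gtrsim e^{-nH_\cF}$, i.e.\ $n_0(P)\le n+c$, i.e.\ $|f^n(P)|_\cF$ bounded below. This holds for a single-generation piece $A\in(f^{-n}(\cF_\cA))_{\cF_\cA(x)}$ (except the two end pieces), because $f^n(A)$ is a whole $\cF_\cA(z)$ of length $\ge a$. It fails for elements of the join $\bigvee_{j=1}^n f^{-j}(\cF_\cA)$. Such a $P$ is cut out by endpoints coming from different generations $j\le n$ and by the endpoints of $\cF_\cA(x)$. Since $\cA$ is not Markov, these endpoints can be arbitrarily close to one another, so $\nu(P)/e^{-nH_\cF}$ is not bounded below. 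Under the hypothesis, $\mu_x\perp\nu$, so nothing controls how much $\mu_x$-mass sits on such anomalously small pieces. The cross-entropy term may therefore exceed $nH_\cF$ by an unbounded amount. In that case the divergence of $D_n(\mu_x\|\nu)$, which your Besicovitch and Dobrushin steps do establish correctly, no longer implies the conclusion.

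The paper's proof never weights join pieces by $\nu$. It fixes a compact $K\subset\cB_\infty$ with $\mu_x(K)>0.99$, splits by $\{K,K^c\}$, and uses only $H\le\log\#$. On $K^c$ the crude count $\#(\cF_\cA^n)_{\cF_\cA(x)}\le c\,e^{nH_\cF}$ suffices. The join pieces meeting $K$ are counted through the single-generation pieces meeting $K$ (the combinatorial Claim), and for those the lower bound $\nu(A)\gtrsim e^{-jH_\cF}$ is valid. Combined with $\nu(K)=0$ (your Besicovitch argument) and $\nu(U_\delta)<\delta$, this gives $\#(\cF_\cA^n)^K_{\cF_\cA(x)}\le c\,\delta e^{nH_\cF}$, hence $H_{\mu_x}\le nH_\cF+c+0.99\log\delta$.

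You could repair the cross-entropy term by replacing $\nu$ with the uniform measure on $\cC_n$ (or a mixture with it). But then the reference measure depends on $n$ and Dobrushin monotonicity is lost. Proving that the relative entropy diverges then reduces to exactly this count of pieces meeting a $\nu$-null compact set.
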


\begin{proof}
	Assume that $\mu(\cB_\infty) = 1$. Then for $\mu$-almost every $x\in\bM$ we have $\mu_x(\cB_\infty) = 1$. We fix such an $x$ and take a compact subset $K = K_x\subset \cB_\infty$ with $\mu_x(K) > 0.99$.
	
	Now, consider the finite measurable partition $\cC = \{K,K^c\}$. We have, for any $n\in\NN$,
	\begin{align*}
	&	H_{\mu_x}\left(\bigvee_{j=1}^{n}f^{-j}(\cF_\cA)\right)\\&\le H_{\mu_x}(\cC) + H_{\mu_x}\left(\bigvee_{j=1}^{n}f^{-j}(\cF_\cA)\bigg|\cC\right)\\
		&\le \log 2 + \mu_x(K) \cdot H_{\mu_x|_K}\left(\bigvee_{j=1}^{n}f^{-j}(\cF_\cA)\right) + \mu_x(K^c)\cdot H_{\mu_x|_{K^c}}\left(\bigvee_{j=1}^{n}f^{-j}(\cF_\cA)\right).
	\end{align*}	
	Similar to the proof of Lemma \ref{l.apbd}, we use the fact that $H_\nu(\cC)\le \log\#\cC_\nu$ for any probability measure $\nu$ and any partition $\cC$ with only finitely many $\nu$-positive measure set. 
	As before, we denote by 
	\begin{equation*}
		\begin{split}
			(\cF_\cA^n)_{\cF_\cA(x)} = \bigg\{A:\, &A \in \bigvee_{j=1}^{n}f^{-j}(\cF_\cA), A \cap \cF_\cA(x)\ne\emptyset\bigg\},
		\end{split}
	\end{equation*}
	which cuts $\cF_\cA(x)$ into finitely many disjoint intervals. We write
	$$
	(\cF_\cA^n)_{\cF_\cA(x)}^{K} = \left\{ A\in(\cF_\cA^n)_{\cF_\cA(x)}: A\cap K\ne\emptyset\right\}, 
	$$
	and similarly 
	$$
	(\cF_\cA^n)_{\cF_\cA(x)}^{K^c}  = \left\{ A\in (\cF_\cA^n)_{\cF_\cA(x)}: A\cap K^c\ne\emptyset\right\}.
	$$
	Then we must have 
	\begin{equation}\label{e.H1}
		\begin{split}
		&H_{\mu_x}\left(\bigvee_{j=1}^{n}f^{-j}(\cF_\cA)\right)\\
		&\le\log 2 + \mu_x(K) \cdot \log\#(\cF_\cA^n)_{\cF_\cA(x)}^K +  \mu_x(K^c)\cdot\log\#(\cF_\cA^n)_{\cF_\cA(x)}^{K^c}.		
		\end{split}
	\end{equation}	
	For $\#(\cF_\cA^n)_{\cF_\cA(x)}^{K^c}$ we use Lemma \ref{l.apbd}  (more precisely, Equation \eqref{e.a1}) to obtain 
	\begin{equation}\label{e.kc}
		\#(\cF_\cA^n)_{\cF_\cA(x)}^{K^c}\le \#(\cF_\cA^n)_{\cF_\cA(x)}\le  c_1e^{nH_\cF}, 
	\end{equation}
	where $c_1$ is some positive constant depending on $\cF$, $\cA$ and $f$ but not on $x$ or $n$.
	
	Below we control the cardinality of $(\cF_\cA^n)_{\cF_\cA(x)}^K$. Recall that $(f^{-j}(\cF_\cA))_{\cF_\cA(x)}$ are those elements of $f^{-j}(\cF_\cA)$ that intersects with $\cF_\cA(x)$.
	
	\medskip
	\noindent {\em Claim}. It holds that 
	$$
	\#(\cF_\cA^n)_{\cF_\cA(x)}^K\le 2 \sum_{j=1}^{n} \#\left\{A\in (f^{-j}(\cF_\cA))_{\cF_\cA(x)}: A\cap K\ne\emptyset \right\}.
	$$
	
	\begin{proof}[Proof of the claim]{
		The proof is similar to the proof of Lemma \ref{l.count} with only two differences. Firstly, if $A\in (f^{-k}(\cF_\cA))_{\cF_\cA(x)}$ does not intersect with $K$ then it will not create any element of $\#(\cF_\cA^k)_{\cF_\cA(x)}^{K}$. Secondly, if an endpoint $x$ meeting $A$ is associated with some element of $(f^{-j}(\cF_\cA))_{\cF_\cA(x)}$ (where $j<k$) that does not intersect $K$, then it will not create a new element of $(\cF_\cA^k)_{\cF_\cA(x)}^K$. This means that each $A\in (f^{-k}(\cF_\cA))_{\cF_\cA(x)}$ with $A\cap K\ne\emptyset$ creates $k'(A) + 1$ elements where $k'(A)$ is the total number of endpoints of those $B\in(f^{-j}(\cF_\cA))_{\cF_\cA(x)} $ with $B\cap K\ne\emptyset$. Then the claim follows by summing over $A$.
		}
%
%
%
		
	\end{proof}
	We continue our proof of Lemma \ref{l.w2}. Let $\nu\in\Gamma(\cF_\cA(x))$ be fixed. Due to the definition of $\cB_\ell$ and the fact that $K\subset \cB_\infty$, we must have $\nu(K)= 0$ as otherwise we get $\mu_x(K) > \ell\cdot \nu(K)$ for all $\ell \in \NN$, meaning $\mu_x(K) = \infty$ which is impossible. Let $\delta>0$ be arbitrary; since $K$ is compact, we can take $r =r(\delta)>0$ sufficiently small such that the set $U_\delta = B_{r(\delta)}(K)$ satisfies $\nu(U_\delta)<\delta.$ Since $\cF$ is an $f$-expanding foliation, there exists $N_1 = N_1(\delta)$ such that for any integer $j\ge  N_1(\delta)$, one has 
	$$
	A\in (f^{-j}(\cF_\cA))_{\cF_\cA(x)}^K\,\, \implies\,\, A\subset U_\delta.
	$$
	On the other hand, \eqref{e.exp} and \eqref{e.size} implies that  
	for every $A\in (f^{-j}(\cF_\cA))_{\cF_\cA(x)}$ (except for possibly two) it must holds that 
	\begin{equation}\label{e.n_0a}
		|n_0(A) - j|\le c_2
	\end{equation}
	for some constant $c_2>0$ only depending on $f$ and the partition $\cA$. 
	
	By Proposition \ref{p.ac4} and \eqref{e.n_0a}, for any $A\in (f^{-j}(\cF_\cA))_{\cF_\cA(x)}^K$,  it must hold that  
	$$
	\nu(A) \ge c_3 e^{-jH_\cF},
	$$
	where $c_3>0$ is a constant that does not depend on $\delta$ or $n$. 
	Together with $\nu(U_\delta)<\delta$ we obtain 
	\begin{align*}
	\#(f^{-j}(\cF_\cA))_{\cF_\cA(x)}^K&\le \frac{\delta}{c_3 e^{-jH_\cF} }\\
	& = \frac{\delta}{c_3} e^{jH_{\cF}}, \,\, \mbox{ whenever $j \ge N_1(\delta)$}.
	\end{align*}
	For the case $j< N_1(\delta)$ we have the coarse estimate similar to \eqref{e.a0} and \eqref{e.a1}: 
	$$
	\#(f^{-j}(\cF_\cA))_{\cF_\cA(x)}^K\le \frac{c_4}{a} e^{(j-n_0(\cF_\cA(x)))H_\cF},
	$$
	for all  $ j\in [n_0(\cF_\cA(x)),N_1(\delta))\cap\NN$. Here $c_4$ is a constant independent of $n$ and $\delta$.
	
	Combining the previous bounds, we obtain, for some $c_5 > 0$ independent of $n$ and $\delta$,
	\begin{align*}
		\#	(\cF_\cA^n)_{\cF_\cA(x)}^{K} &\le c_5 +  2\left(\sum_{j=n_0(\cF_\cA(x))}^{N_1(\delta)-1} + \sum_{j = N_1(\delta)}^{n}\right)\# (f^{-j}(\cF_\cA))_{\cF_\cA(x)}^K\\
		&\le c_5 + 2\sum_{j=n_0(\cF_\cA(x))}^{N_1(\delta)-1} \frac{c_4}{a} e^{(j-n_0(\cF_\cA(x)))H_\cF} + 2\sum_{j = N_1(\delta)}^{n} \frac{\delta}{c_3} e^{jH_\cF}\\
		&\le c_5 +c_6 e^{N_1(\delta)H_\cF} + \delta\cdot {c_7} e^{nH_\cF},
	\end{align*}
	where all constants $c_i$ are positive and do not depend on $n$ or $\delta.$ Note that for all $n\gg N_1(\delta)$  one has 
	$$
	e^{N_1(\delta)H_\cF} < \delta e^{nH_\cF}.
	$$
	This implies that for every $\delta>0$, there  exists $N_2(\delta)\gg N_1(\delta)$, such that for all $n>N_2(\delta)$ one has 
	\begin{equation}\label{e.a2}
		\#(\cF_\cA^n)_{\cF_\cA(x)}^K \le  {\delta}{c_8} e^{nH_\cF}
	\end{equation}
	where $c_8>0$ is a constant that does not depend on $\delta$ or $n$.
 
	Combining \eqref{e.H1}, \eqref{e.kc}, \eqref{e.a2} we obtain, for all $n>N_2(\delta),$
	\begin{align*}
		&H_{\mu_x}\left(\bigvee_{j=1}^{n}f^{-j}(\cF_\cA)\right)\\
		&\le \log 2 + \mu_x(K)\left(\log\delta +\log c_8 + nH_\cF \right) +\mu_x(K^c) \left(\log c_1 + nH_\cF\right) \\
		& = \Big(\log 2 +\mu_x(K)\cdot\log c_8 + \mu_x(K^c)\cdot  \log c_1\Big) + \big(\mu_x(K) + \mu_x(K^c)\big) nH_\cF\\
		&\hspace{1cm} + \mu_x(K)\log\delta\\
		& =  c_9 + nH_\cF +\mu_x(K)\log\delta,
	\end{align*}
	where $c_9$ is a constant that does not depend on $n$ or $\delta$ but possibly depend on $x$ (since $K$ depends on $x$).
	
	Recall that $\mu_x(K) > 0.99$ and the choice of $\delta$ is arbitrary. In particular, given any $b>0$ we may choose $\delta = \delta(x,b)$ close to zero such that 
	$$
	\log c_9 + 0.99\log\delta<-b. 
	$$
	It follows that for such $\delta$ and every $n > N_2(\delta) = N_2(x,b)$ one has 
	$$
	H_{\mu_x}\left(\bigvee_{j=1}^{n}f^{-j}(\cF_\cA)\right)<nH_\cF-b,
	$$
	finishing the proof of Lemma \ref{l.B}.
\end{proof}

Now we are ready to show that $\mu_x\prec \nu$ for $\mu$-almost every $x$ and any $\nu\in\Gamma(\cF_\cA(x))$. Note that 
$$
H_\mu\left(\bigvee_{j=1}^n f^{-j}(\cF_\cA)\Big|\cF_\cA\right) =\int H_{\mu_x}\left(\bigvee_{j=1}^n f^{-j}(\cF_\cA)\right)\,d\mu(x).
$$
Similar to Lemma \ref{l.apbd}, here the conditioning is dropped for the integrand because each $\mu_x$ is supported on the partition element $\cF_\cA(x)$. 

Let $b=100$. We choose $\delta>0$ sufficiently small such that
\begin{equation}\label{e.delta1}
	-100(1-\delta) + \delta L_2<0
\end{equation}
where $L_2>0$ is given by Lemma \ref{l.apbd} independent of $x$ and $n$. Then, we select a subset $K_\delta$ consisting of $\mu$-typical points, such that (1) $\mu(K_\delta)>1-\delta$, and (2) for every $x\in K_\delta$, the constant $N= N(x,100)$ given by Lemma \ref{l.B} satisfies $N\le  N_0$ for some constant $N_0$ sufficiently large. Here we increase $N_0$ to make it measurable w.r.t.\ $x$. For any $n>N_0$ we write, 
using Proposition \ref{p.ac4}, Lemma \ref{l.apbd} and Lemma \ref{l.B}, 
\begin{align*}
H_\mu\left(\bigvee_{j=1}^n f^{-j}(\cF_\cA)\Big|\cF_\cA\right)=	&\int H_{\mu_x}\left(\bigvee_{j=1}^n f^{-j}(\cF_\cA)\right)\,d\mu(x)\\
	&= \left(\int_K + \int_{K^c}\right) H_{\mu_x}\left(\bigvee_{j=1}^n f^{-j}(\cF_\cA)\right)\,d\mu(x)\\
	&\le \int_K (nH_\cF -100)d\mu + \int_{K^c} (nH_\cF+L_2)\ d\mu\\
	&= nH_\cF -100\mu(K) + L_2\mu(K^c)\\
	&\le nH_\cF - 100(1-\delta) + \delta L_2\\
	&< nH_\cF,
\end{align*}
where the last inequality is due to \eqref{e.delta1}.  However, Lemma \ref{l.mono} shows that 
$$
H_\mu\left(\bigvee_{j=1}^n f^{-j}(\cF_\cA)\Big|\cF_\cA\right)\ge nh_\mu^\cF(f) = nH_\cF
$$
for any $n\in\NN$, which is a contradiction. We conclude that $\mu(\cB_\infty) = 0$. This implies that $\mu_x\prec \nu$ for $\mu$-almost every $x$ and any $\nu\in\Gamma(\cF_\cA(x))$.  For otherwise, one can obtain a positive $\mu$-measure set $B$ such that for any $x\in B$ one must have $\mu_x(B)>0$ and $\nu(B) = 0$ for some $\nu\in\Gamma(\cF_\cA(x))$; then $\mu_x$-almost every point in $B$ must belong to $\cB_\infty$ due to the Lebesgue-Besicovitch Density Theorem (see, for instance, \cite[Theorem 1.32]{EG}) and Proposition \ref{p.ac4}. This shows that $\mu(\cB_\infty)>0$, a contradiction.

\subsection{Finishing the proof of Theorem \ref{t.1}}
To simplify notation, we will continue writing $\mu_x = \mu_x^{\cF,\cA}$. 
The goal of this section is to prove the following:
\begin{proposition}\label{p.2}
	Under the assumptions of Theorem \ref{t.1}, for $\mu$-almost every $x\in\bM$, the conditional measure $\mu_x$ satisfies the Gibbs property: there exists $L_0>1$ such that for any $\cF$-disk $I\subset \cF_\cA(x)$, one has
	$$
	\mu_x(I)\asymp_{L_0} e^{-n_0(I)H_\cF}.
	$$
\end{proposition}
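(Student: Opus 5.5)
We already know $\mu_x\prec\nu$ for $\mu$-a.e.\ $x$ and every $\nu\in\Gamma(\cF_\cA(x))$, and Proposition \ref{p.ac4} gives $\nu(I)\asymp_{\widetilde L}e^{-n_0(I)H_\cF}$. So Proposition \ref{p.2} is equivalent to showing that the density $\rho_x=d\mu_x/d\nu$ is bounded above and below by uniform constants, $\nu$-a.e.\ on $\cF_\cA(x)$. The plan is to obtain these bounds through the invariance of conditional measures under $f$ combined with the conformal property (Theorem \ref{t.properties}(4)). The upper bound should then follow from a variational, entropy-maximality argument.

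\emph{Step 1 (transport of densities).} Since $\mu$ is invariant, $f^n_*\mu_{x}$ restricted to $\cF_\cA(f^nx)$ equals a normalized multiple of $\mu_{f^nx}$. This holds because $f^{-n}\cF_\cA$ refines $\cF_\cA$ along $\cF$, so conditional measures of $\mu$ on $\cF_\cA\vee\cdots$ are restrictions of $\mu_x$. Likewise, by Theorem \ref{t.properties}(4), $f^n_*\nu_x$ restricted to $\cF_\cA(f^nx)$ is comparable to $e^{-nH_\cF}\nu_{f^nx}$ up to $L^2e^{NH_\cF}$. It follows that the ratio $\rho_x(y)/\rho_x(z)$, for $y,z$ in the same element $A$ of $\bigvee_{j\le n}f^{-j}\cF_\cA$, is uniformly comparable to $\rho_{f^nx}(f^ny)/\rho_{f^nx}(f^nz)$. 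In other words, oscillation of $\rho$ on small atoms is transported to oscillation on large atoms.

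\emph{Step 2 (entropy identity).} I will express the conditional entropy through the densities. On one hand,
\[H_{\mu_x}\Big(\bigvee_{j=1}^n f^{-j}\cF_\cA\Big)=-\sum_A\mu_x(A)\log\mu_x(A).\]
On the other hand, $\nu(A)\asymp e^{-nH_\cF}$ for all but boundedly many atoms, by Proposition \ref{p.ac4} and \eqref{e.n_0a}. Hence
\[H_{\mu_x}=nH_\cF-\sum_A\mu_x(A)\log\frac{\mu_x(A)}{\nu(A)}+O(1).\]
The middle term is a relative entropy of the discretized $\mu_x$ with respect to the discretized $\nu$ (both probability measures), so it is nonnegative. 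Since $h^\cF_\mu=H_\cF$, Lemma \ref{l.mono} forces $\int\sum_A\mu_x(A)\log(\mu_x(A)/\nu(A))\,d\mu\le O(1)$ uniformly in $n$. Letting $n\to\infty$, martingale convergence gives
\[\int\!\!\int\log\rho_x\,d\mu_x\,d\mu(x)<\infty,\]
and the same bound holds with uniform constants for the pushed-forward pieces.

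\emph{Step 3 (uniform bounds, the main obstacle).} Finiteness of the relative entropy does not yet give $L^\infty$ bounds. For these I would define the measurable functions $M(x)=\operatorname{ess\,sup}\rho_x/\operatorname{ess\,inf}\rho_x$, or more robustly the oscillation of $\log\rho_x$ at scale comparable to $n_0$-level $k$. Step 1 shows that the oscillation at level $n+k$ at $x$ is controlled by the oscillation at level $k$ at $f^nx$, up to a fixed multiplicative constant $L^4e^{2NH_\cF}$ that does not accumulate, because the comparison in Theorem \ref{t.properties}(4) is a one-shot comparison rather than a product over $n$. Poincaré recurrence and ergodicity then let me choose $n$ with $f^nx$ in a set where the level-$0$ oscillation is bounded by some $M_0$. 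Thus every atom at every level has oscillation of $\rho_x$ at most $C M_0$. By Lebesgue differentiation, $\rho_x$ then takes values within a factor $CM_0$ of its average, which is $\asymp1$ since both measures are probability measures.

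The delicate point is that $M_0$ must be uniform. I would first use Step 2 together with Chebyshev to find a positive-measure set where $\rho$ is bounded at level $0$. I would then make sure the transport constant in Step 1 is exactly the one from Theorem \ref{t.properties}(3),(4), so that the final $L_0$ depends only on $L$, $N$, and this $M_0$. Finally, $\mu_x(I)=\int_I\rho_x\,d\nu$ together with Proposition \ref{p.ac4} gives $\mu_x(I)\asymp_{L_0}e^{-n_0(I)H_\cF}$.
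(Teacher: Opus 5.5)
There is a genuine gap in Step 3, and Step 2 cannot close it.

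\textbf{Missing seed set.} Your upgrade to $L^\infty$ bounds needs a seed: a set $G$ with $\mu(G)>0$ on which $M(x)=\operatorname{ess\,sup}\rho_x/\operatorname{ess\,inf}\rho_x\le M_0$. Step 2, even granting its $O(1)$, only bounds the averaged relative entropy $\int\!\int\log\rho_x\,d\mu_x\,d\mu$. Chebyshev then gives a positive-measure set on which $\int\log\rho_x\,d\mu_x$ is bounded. It does not give a set on which $\rho_x$ is bounded above and away from zero. Finite relative entropy is compatible with $M(x)=\infty$ for $\mu$-a.e.\ $x$, and nothing else in your argument excludes this. (The $O(1)$ is also unjustified. \eqref{e.n_0a} concerns elements of $f^{-j}(\cF_\cA)$. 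Atoms of the join $\bigvee_{j=1}^n f^{-j}(\cF_\cA)$ are elements of $f^{-n}(\cF_\cA)$ further cut by endpoints from lower levels, as in Lemma \ref{l.count}. So $\nu(A)\gtrsim e^{-nH_\cF}$ is not available for all but boundedly many atoms.)

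\textbf{Propagation in the wrong direction.} Choosing $n$ with $f^nx\in G$ controls the oscillation of $\rho_x$ only on the one level-$n$ atom containing $x$. Other atoms would need other times. Moreover, bounded oscillation inside each small atom says nothing about the ratio of $\rho_x$ between different atoms, which is exactly $M(x)$. So ``every atom at every level has oscillation at most $CM_0$'' does not follow, and neither does the Lebesgue-differentiation conclusion. Step 1 does work in the reverse direction. For typical $x$ and large $n$, $\cF_\cA(x)=f^n(J)$ with $J\subset\cF_\cA(f^{-n}x)$, so Theorem \ref{t.properties}(4) gives $M(x)\le L^4M(f^{-n}x)$. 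Recurrence of $f^{-n}x$ to $G$ would then spread a bound from $G$ to a.e.\ $x$, but only once $G$ is known to exist.

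\textbf{What is missing, and how the paper gets it.} You need a source of uniform two-sided bounds that does not come from entropy. The paper uses Sinai's averaging argument.
\begin{enumerate}
\item Fix a typical $y$ and a compact $K\subset\cF_\cA(y)$ of Birkhoff-generic points with $\nu_y(K)>0$. This is the only place where $\mu_y\prec\nu_y$ from Section \ref{ss.3.2} is used. It guarantees $\frac1n\sum_{k<n}(f^k)_*(\nu_y|_K)\to\mu$.
\item Replace $K$ by the unions $U_k$ of full pieces $f^{-k}(\cF_\cA(z_j))$; this does not change the limit.
\item By Theorem \ref{t.properties}(1),(3), each $(f^k)_*(\nu_y|_{U_k^j})$ is $L^4$-comparable to an element of $\Gamma(\cF_\cA(z_j))$. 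Hence it obeys the Gibbs bound of Proposition \ref{p.ac4} with a uniform constant.
\item This bound survives convex combinations and the weak-* limit, and \eqref{e.conditional} transfers it to $\mu_x$.
\end{enumerate}
This gives $L_0=L^4\widetilde L e^{H_\cF}$ directly, with no seed set and no recurrence.
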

Indeed we will see that $L_0 = L^4\cdot\widetilde L\cdot e^{H_\cF}$  where $\widetilde L$ is given by Proposition \ref{p.ac4}.  It is easy to see that Proposition \ref{p.2}, together with Theorem \ref{t.properties} (1), imply Theorem \ref{t.1} with $L_1 = L\cdot L_0$.

The rest of this section is dedicated to the proof of Proposition \ref{p.2}.  We adapt a classical argument due to Sinai \cite{Sinai} for SRB measures of hyperbolic diffeomorphisms.

We take a $\mu$-typical point $x\in\bM$ and recall that $\cA(x)$ is contained in a foliation box $\cB_k$ (see the discussion at the beginning of Section \ref{ss.partition}). To simplify notation we may assume that $\cA(x)\subset \cB_1$. We also let the $\cF$-disk $I\subset \cF_\cA(x)$ be fixed.

We also fix any point $y\in \bM$ for which $\mu_y$ is well-defined and $\mu_y\prec \nu_y$ for any $\nu_y\in\Gamma(\cF_\cA(y))$, thanks for the result of Section \ref{ss.3.2}. On $\cF_\cA(y)$ we fix a compact set $K = K(y)$ with positive $\mu_y$ measure such that for every $z\in K:$
\begin{itemize}
	\item $z$ is a typical point of $\mu$ in the sense of Birkhoff: one has 
	$$
	\frac{1}{n}\sum_{k=0}^{n-1} \delta_{f^k(z)} \to  \mu.
	$$ 
	\item $\mu_z$ is well-defined and coincides with $\mu_y.$
\end{itemize}
Such a $K$ exists because points on $\cF_\cA(y)$ satisfying both requirements have full $\mu_y$-measure. Also note that $\nu_y(K)>0$ for any $\nu_y\in\Gamma(\cF_\cA(y))$ since $\mu_y\prec\nu_y$ by the previous subsection. We fix such a $y.$

Next, we consider the following probability measures:
$$
w_n:= \frac{1}{n}\sum_{k=0}^{n-1} (f^k)_*(\nu_y|_{K}),
$$
where $\nu_y|_K = \frac{\nu_y(\cdot\,\cap K)}{\nu_y(K)}$ is the conditional measure of $\nu_y$ on $K$.

\begin{lemma}\label{l.w1}
	$w_n$ converges in weak-* topology to $\mu$.
\end{lemma}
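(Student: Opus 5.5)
The plan is to compare $w_n$ with the corresponding averages of the Dirac measures along points of $K$, and then invoke dominated convergence. Put $\hat\nu := \nu_y|_K$. This is a probability measure carried by $K$: it is well-defined because $\nu_y(K)>0$, which follows from $\mu_y(K)>0$ and $\mu_y\prec\nu_y$ (Section \ref{ss.3.2}). For each $z\in K$ define
$$
e_n(z):=\frac1n\sum_{k=0}^{n-1}\delta_{f^k(z)}.
$$
By linearity of push-forward and Fubini,
$$
w_n=\frac1n\sum_{k=0}^{n-1}\int (f^k)_*\delta_z\,d\hat\nu(z)=\int_K e_n(z)\,d\hat\nu(z).
$$
This identity holds in the sense that, for every continuous $\varphi:\bM\to\RR$,
$$
\int\varphi\,dw_n=\int_K\Big(\frac1n\sum_{k=0}^{n-1}\varphi(f^k(z))\Big)\,d\hat\nu(z).
$$

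Now fix $\varphi\in C(\bM)$. The first defining property of $K$ is that every $z\in K$ is Birkhoff-typical, meaning $e_n(z)\to\mu$ in the weak-* topology. Consequently the integrand $\frac1n\sum_{k<n}\varphi(f^k z)$ converges to $\int\varphi\,d\mu$ for \emph{every} $z\in K$, not merely almost every $z$. This matters: $\hat\nu$ need not be absolutely continuous with respect to $\mu$ or its conditionals, so a merely $\mu$-almost-everywhere statement would not suffice. The pointwise statement avoids the issue entirely.

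The integrand is bounded by $\|\varphi\|_\infty$ uniformly in $n$ and $z$, and $\hat\nu$ is a probability measure. Dominated convergence therefore gives $\int\varphi\,dw_n\to\int\varphi\,d\mu$. Since $\varphi$ was an arbitrary continuous function, $w_n\to\mu$ in the weak-* topology.

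The only step needing care is measurability of $z\mapsto\frac1n\sum_{k<n}\varphi(f^kz)$ with respect to $\hat\nu$. This is immediate, since the function is continuous. Note also that $\hat\nu$ is a Borel measure on the compact set $K$, because $\nu_y$ is a Borel probability measure on $\overline{\cF_\cA(y)}$. I expect no real obstacle here; the substantive work was the choice of $K$ consisting entirely of Birkhoff-generic points, together with $\nu_y(K)>0$, both of which were arranged before the lemma.
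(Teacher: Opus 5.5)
Your proposal is correct and follows the paper's argument: write $w_n(\varphi)$ as the $\nu_y|_K$-integral of the Birkhoff averages of $\varphi$, note that by the choice of $K$ these averages converge to $\mu(\varphi)$ at every point of $K$, and conclude by dominated convergence. You also point out that convergence at every point of $K$, not just $\mu_y$-almost everywhere, is what is needed, since at this stage only $\mu_y\prec\nu_y$ is known; that observation is correct.
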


\begin{proof}
	Let $\phi:\bM\to\RR $ be any continuous function. We write
	\begin{align*}
		w_n(\phi) &= \int\phi\ d\left(\frac{1}{n}\sum_{k=0}^{n-1} (f^k)_*(\nu_y|_{K})\right)\\
		& = \frac1n\sum_{k=0}^{n-1} \int \phi\circ f^{k} \ d\nu_y|_{K}\\
		& = \int \left( \frac1n\sum_{k=0}^{n-1} \phi\circ f^k(x) \right) \ d\nu_y|_K(x).
	\end{align*}
Note that $\frac1n\sum_{k=0}^{n-1} \phi\circ f^k(y) $ converges to $\mu(\phi)$ pointwise  on $K$. By the dominated convergence theorem we see that $w_n(\phi)$ must converge to $\nu_y|_K(\mu(\phi))= \mu(\phi).$ Since $\phi$ is arbitrary, we conclude that $w_n$ converges to $\mu$ in weak-* as required.
\end{proof}

Next we approximate $K$ by a sequence of open (for the relative topology on $\cF_\cA(y)$) neighborhoods $U_n$ in the following way: 
\begin{equation}\label{e.U}
U_n := \bigcup_{z\in f^{n}(\cF_\cA(y)): \cF_\cA(z)\cap f^n(K)\ne\emptyset} f^{-n}(\cF_\cA(z)).
\end{equation}
The construction implies $K\subset U_n$. Since the diameter of  sets of the form $f^{-n}(\cF_\cA(z))$ are exponentially small in $n$ (uniformly in $z$) and $K$ is compact, we see that $\bigcap_{n\ge 0} U_n = K,$ and hence 
$$
\nu_y(U_n)\to \nu_y(K).
$$ 

Next, we define a new sequence of probability measures:
\begin{equation}\label{e.tw}
	\tilde w_n:= \frac{1}{n}\sum_{k=0}^{n-1} (f^k)_*(\nu_y|_{U_k})
\end{equation}
where, as before, $\nu_y|_{U_k}$ is the conditional measure of $\nu_y$ on $U_k$.

\begin{lemma}\label{l.w2}
	$\tilde w_n$ converges in weak-* topology to $\mu$. 
\end{lemma}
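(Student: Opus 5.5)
Since Lemma \ref{l.w1} gives $w_n\to\mu$, the plan is to compare $\tilde w_n$ with $w_n$ directly and show that the total variation of $\tilde w_n - w_n$ tends to zero. For that it suffices to show $\|\nu_y|_{U_k}-\nu_y|_K\|_{TV}\to 0$ as $k\to\infty$. Indeed, push-forward by $f^k$ does not increase total variation, so the Cesàro averages then satisfy
$$
\|\tilde w_n - w_n\|_{TV}\le \frac1n\sum_{k=0}^{n-1}\|\nu_y|_{U_k}-\nu_y|_K\|_{TV}\to 0 .
$$
Testing against any continuous $\phi$ then gives $|\tilde w_n(\phi)-w_n(\phi)|\le \|\phi\|_\infty\|\tilde w_n-w_n\|_{TV}\to 0$, so $\tilde w_n(\phi)\to\mu(\phi)$.

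The key step is the total variation estimate for the conditioned measures. We have $K\subset U_k$ by construction, and $\nu_y(U_k)\to\nu_y(K)>0$ by the discussion after \eqref{e.U}. Positivity of $\nu_y(K)$ holds because $\mu_y(K)>0$ and $\mu_y\prec\nu_y$ by Section \ref{ss.3.2}. For any Borel set $E$ we can write
$$
\nu_y|_{U_k}(E)-\nu_y|_K(E) = \frac{\nu_y(E\cap U_k)}{\nu_y(U_k)}-\frac{\nu_y(E\cap K)}{\nu_y(K)}.
$$
We split this as
$$
\frac{\nu_y(E\cap(U_k\setminus K))}{\nu_y(U_k)} + \nu_y(E\cap K)\left(\frac1{\nu_y(U_k)}-\frac1{\nu_y(K)}\right).
$$
The first term is bounded by $\nu_y(U_k\setminus K)/\nu_y(K)$. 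The second is bounded by $1-\nu_y(K)/\nu_y(U_k)$. Both bounds are independent of $E$ and tend to zero. This yields the claim.

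The only point requiring care is the convergence $\nu_y(U_k)\to\nu_y(K)$, which we take from the text preceding the lemma. It rests on the sets $U_k$ being decreasing, or at least shrinking to $K$. Since $U_k$ need not be monotone in $k$, I would check it instead via the inclusion $U_k\subset B_{r_k}(K)$ within $\cF_\cA(y)$, where $r_k\to 0$ is the uniform exponential bound on the diameters of the sets $f^{-k}(\cF_\cA(z))$. Continuity from above of the finite measure $\nu_y$ on the decreasing neighborhoods $B_{r}(K)$, whose intersection is $K$ because $K$ is compact, then gives $\nu_y(U_k)\to\nu_y(K)$. I expect this to be the main (though mild) obstacle; the rest is routine.
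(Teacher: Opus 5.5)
Your proof is correct and follows essentially the same route as the paper. The paper also splits $\nu_y(\phi\circ f^k|_{U_k})$ into the contributions of $K$ and $U_k\setminus K$, obtains the bound $|\phi|_{C^0}\bigl(1-\nu_y(K)/\nu_y(U_k)+\nu_y(U_k\setminus K)/\nu_y(U_k)\bigr)\to 0$, and concludes that $\tilde w_n$ and $w_n$ share the limit $\mu$; your total-variation formulation is a slightly cleaner packaging of this. Your check of $\nu_y(U_k)\to\nu_y(K)$ via $K\subset U_k\subset B_{r_k}(K)$ with $r_k\to 0$ is a sound justification of a step the paper only asserts from $\bigcap_n U_n=K$, which by itself would need monotonicity of the $U_n$.
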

\begin{proof}
	Let $\phi:\bM\to\RR$ be any continuous function. We consider 
	\begin{align*}
		&|(f^k)_*(\nu_y|_K)(\phi) - (f^k)_*(\nu_y|_{U_k})(\phi)|\\ &= \left|\frac{\nu_y(\phi\circ f^{k}|_ K)}{\nu_y(K)} - \frac{\nu(\phi\circ f^{k}|_{U_k})}{\nu_y(U_k)}\right|\\
		&= \left|\frac{\nu_y(\phi\circ f^{k}|_ K)}{\nu_y(K)} - \left(\frac{\nu_y(\phi\circ f^{k}|_{K})}{\nu_y(K)} + \frac{\nu_y(\phi\circ f^{k}|_{U_k\setminus K})}{\nu_y(K)}\right)\frac{\nu_y(K)}{\nu_y(U_k)}\right|\\
		& \le \frac{\nu_y(|\phi|\circ f^{k}|_ K)}{\nu_y(K)}\left(1 - \frac{\nu_y(K)}{\nu_y(U_k)}\right) + \frac{\nu_y(|\phi|\circ f^{k}|_{U_k\setminus K})}{\nu_y(K)}\frac{\nu_y(K)}{\nu_y(U_k)}\\
		&\le {|\phi|_{C^0}}\left(1 - \frac{\nu_y(K)}{\nu_y(U_k)} +  \frac{\nu_y(U_k\setminus K)}{\nu_y(U_k)}\right) ,
	\end{align*}
	which converges to $0$ as $k\to\infty$. This shows that $\tilde w_n$ and $w_n$ must have the same limit, which has to be $\mu$  due to Lemma \ref{l.w1}. 
\end{proof}

We recall the construction of the conditional measures $\mu_x$ by Rokhlin.
Let $\Phi:\cB_1\to \RR^{\dim\bM} = \RR\times \RR^{\dim\bM-1}$ be a continuous map that sends each local leaf of $\cF$ to a line parallel to the first coordinate axis $\RR$.
By our assumptions on $\cF$, $\Phi$ restricted to each local leaf of $\cF$ is a $C^{1+\alpha}$ diffeomorphism. Without loss of generality, we assume that $\cF_{loc}(x)$, the local leaf in $\cB_1$ containing $x$, is sent to the first coordinate axis $\RR$ and $\Phi(x)$ is the origin. For each $r>0$, we enlarge the first coordinate axis $\RR$ by $r$ to obtain a cylinder 
$$
B_r(\RR) := \RR\times B_r^{\RR^{\dim\bM-1}}(0),
$$ 
where $B_r^{\RR^{\dim\bM-1}}(0)$ is the ball in $\RR^{\dim\bM-1}$ centered at the origin with radius $r$. 
We then denote by $\mathbf B_r(\cF_\cA(x))$ the set $\Phi^{-1} \left(B_r(\RR)\right)\cap \cA(x).$ $\mathbf B_r(\cF_\cA(x))$ can be seen as an open (for the relative topology on $\cA(x)$) tubular neighborhood of $\cF_\cA(x)$ with size $r$, consists entirely of local leafs $\cF_\cA(z)$, and shrinks to $\cF_\cA(x)$ as $r\to 0$. Since $x$ is assumed to be a typical point of $\mu$, one must have $\mu(\mathbf B_r(\cF_\cA(x)))>0$ for any $r>0.$

Recall that $I\subset \cF_\cA(x)$ is an $\cF$-disk that has been fixed. We define the cylinder around $I$ of size $r$ by 
$$
I_r: = \Phi^{-1} \left(\Phi(J)\times  B_r^{\RR^{\dim\bM-1}}(0) \right)\subset \mathbf B_r(\cF\cA(x)).
$$
By Rokhlin's construction of conditional measures (or by using the Martingale Convergence Theorem), we see that the conditional measure $\mu_x$ is the weak-* limit of $\mu$ conditioned on the cylinder $I_r.$ More precisely, 
\begin{equation}\label{e.conditional}
\mu_x(I) = \lim_{r\to0}\mu|_{\mathbf B_r(\cF_\cA(x))}(I_r). 	
\end{equation}

{
}

The next lemma is the key step in proving Proposition \ref{p.2}.

\begin{lemma}\label{l.w3}
	Let $L_0 = L^4 \widetilde L e^{H_\cF}$. Then, for every $r>0$ sufficiently small (depending on $I$) and $n\in\NN$ sufficiently large, it holds that
	$$
	\tilde w_n|_{\mathbf B_r(\cF_\cA(x))}(I_r) \asymp_{L_0} e^{-n_0(I)H_\cF}.
	$$ 
\end{lemma}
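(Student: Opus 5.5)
The plan is to decompose each summand $(f^k)_*(\nu_y|_{U_k})$ into pieces supported on local leaves $\cF_\cA(z)$, and to show that every piece lying inside $\mathbf B_r(\cF_\cA(x))$ gives $I_r$ relative mass comparable to $e^{-n_0(I)H_\cF}$. By \eqref{e.U}, $U_k$ is a disjoint union (up to endpoints) of the disks $f^{-k}(\cF_\cA(z))$ with $z\in f^k(\cF_\cA(y))$ and $\cF_\cA(z)\cap f^k(K)\neq\emptyset$. Hence
$$
(f^k)_*(\nu_y|_{U_k}) = \frac{1}{\nu_y(U_k)}\sum_{z} (f^k)_*\big(\nu_y|_{f^{-k}(\cF_\cA(z))}\big)\,\nu_y\big(f^{-k}(\cF_\cA(z))\big),
$$
and each term $(f^k)_*(\nu_y|_{f^{-k}\cF_\cA(z)})$ is a probability measure on the whole local leaf $\cF_\cA(z)$. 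We also need the elements containing the two endpoints of $f^k(\cF_\cA(y))$ to contribute at most two partial pieces. Since $\nu_y(U_k)\ge \nu_y(K)>0$, these pieces have total weight $O(e^{-kH_\cF})$ and vanish after Cesàro averaging.

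For a full piece, we apply Theorem \ref{t.properties}(1) and (4). The measure $(f^k)_*\nu_y$ restricted to $\cF_\cA(z)$ is, after normalization, equivalent to any $\nu_z\in\Gamma(\cF_\cA(z))$, with density in $[L^{-2},L^2]$ times a constant. Normalizing removes that constant, so the piece $\eta_z$ satisfies $d\eta_z/d\nu_z\asymp_{L^4}1$. The lower bound \eqref{e.size} keeps $n_0(\cF_\cA(z))$ bounded, and Proposition \ref{p.ac4} gives $\nu_z(J)\asymp_{\widetilde L} e^{-n_0(J)H_\cF}$ for sub-disks $J\subset\cF_\cA(z)$.

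Now suppose $\cF_\cA(z)\subset \mathbf B_r(\cF_\cA(x))$. The set $I_r\cap\cF_\cA(z)$ is the holonomy image $J_z$ of $I$ inside the foliation box. For $r$ small (depending on $I$), continuity of the foliation and of $f^m$ on compact pieces shows that $f^{n_0(I)}(J_z)$ has length close to $|f^{n_0(I)}(I)|_\cF$. Therefore $n_0(J_z)\in\{n_0(I),n_0(I)+1\}$, and this accounts for the factor $e^{H_\cF}$ in $L_0$. Consequently
$$
\eta_z(I_r)\asymp_{L^4\widetilde L e^{H_\cF}} e^{-n_0(I)H_\cF}.
$$

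Finally, $\mathbf B_r(\cF_\cA(x))$ is a union of whole local leaves. Thus $\tilde w_n|_{\mathbf B_r}$ is a convex combination of the full pieces $\eta_z$ with $\cF_\cA(z)\subset\mathbf B_r$, plus the negligible boundary pieces. A convex combination preserves the two-sided bound, so the estimate on $I_r$ passes to $\tilde w_n|_{\mathbf B_r}$. For this we need $\tilde w_n(\mathbf B_r)$ bounded below, which holds for large $n$ because $\tilde w_n\to\mu$ by Lemma \ref{l.w2} and $\mu(\mathbf B_r)>0$. We would pass to a slightly smaller open tube to use weak-* convergence if needed; this only affects the boundary pieces.

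The main obstacle is the uniform comparison $n_0(J_z)\approx n_0(I)$ across all nearby leaves. The argument must ensure the error is at most one iterate, since otherwise the constant exceeds $e^{H_\cF}$. The endpoint case $|f^{n_0(I)}(I)|_\cF=1$ requires an approximation from inside and outside by slightly perturbed disks, as in the remark after Lemma \ref{p.gibbs}.
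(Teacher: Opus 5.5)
Your proposal is correct and follows essentially the same route as the paper: decompose $U_k$ into the pull-backs $f^{-k}(\cF_\cA(z))$, compare each pushed-forward piece with $\Gamma(\cF_\cA(z))$ up to the factor $L^4$, and conclude with Proposition \ref{p.ac4} together with $|n_0(I_r\cap\cF_\cA(z))-n_0(I)|\le 1$ for small $r$. The differences are minor: you get the $L^4$ comparison from the conformal property (4) plus normalization, rather than from (3) followed by (1), and you treat explicitly two points the paper passes over — the two endpoint pieces, which cost only an $o(1)$ error in the constant for finite $n$, and the positivity of $\tilde w_n(\mathbf B_r(\cF_\cA(x)))$.
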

\begin{proof}
	Recall from \eqref{e.tw} that $\tilde w_n= \frac{1}{n}\sum_{k=0}^{n-1} (f^k)_*(\nu_y|_{U_k})$. For each $k$, ${U_k}$ is a finite disjoint union of $U_k^j = f^{-k}(\cF_\cA(z_j))$ (recall \eqref{e.U}), each of which is an $\cF$-disk. This means that $\tilde w_n|_{\mathbf B_r(\cF_\cA(x))}$ is a finite convex combination of measures of the form $(f^k)_*(\nu_y|_{U_k^i})$.
 	 By Theorem \ref{t.properties} (3), we have 
	\begin{equation}\label{e.w3a}
		\frac{d\nu_y|_{U_k^j} }{d\nu_k^j}\asymp_{L^4}1
	\end{equation}
	for every $j$ and any $\nu_k^j\in\Gamma(U_k^j)$. Also note that  $$
	(f^k)_*(\nu_k^j) \in (f^k)_*(\Gamma(U_k^j)) =  (f^k)_*(\Gamma(f^{-k}(\cF_\cA(z_j))))  = \Gamma(\cF_\cA(z_j))
	$$ 
	by Theorem \ref{t.properties} (1).
	
	On the other hand, each $\cF_\cA(z_j)$ (except for possibly two associated to the endpoints of $f^k(\cF_\cA(y))$) either lies entirely in $\mathbf B_r(\cF_\cA(x))$, or is disjoint with it; only the former counts towards $\tilde w_n|_{\mathbf B_r(\cF_\cA(x))}$. For those lying in $\mathbf B_r(\cF_\cA(x))$, $I_r\cap \cF_\cA(z_j)$ is an $\cF$-disk of length approximately the same as $I$ (as long as $r$ is sufficiently small). With $I$ fixed, we may shrink $r$ if necessary to obtain that
	$$
	|n_0(I_r\cap \cF_\cA(z_j)) - n_0(I_r)|\le 1.
	$$
	Then, we use Proposition \ref{p.ac4} to obtain 
	\begin{equation}\label{e.w3b}
		\nu(I_r)\asymp_{\widetilde L} e^{-n_0(I_r\cap\cF_\cA(z_j))H_\cF} \asymp_{\widetilde L\cdot e^{H_\cF}}   e^{-n_0(I)H_\cF}
	\end{equation}
	for every $\nu\in\Gamma(\cF_\cA(z_j))$. In particular, \eqref{e.w3b} holds for $	(f^k)_*(\nu_k^j)$.
	Combining \eqref{e.w3a} and \eqref{e.w3b}, we have
	\begin{equation}\label{e.w3c}
		(f^k)_*(\nu_y|_{U_k^j})(I_r)\asymp_{L^4} (f^k)_*(\nu_k^j)(I_r)\asymp_{L_0}  e^{-n_0(I)H_\cF},
	\end{equation}
	where $L_0 = L^4\widetilde Le^{H_\cF}.$
	
	Next, we note that $\tilde w_n|_{\mathbf B_r(\cF_\cA(x))}$ is a finite convex combination of $(f^k)_*(\nu_y|_{U^j_k})$. Therefore, \eqref{e.w3c} leads to 
	$$
	\tilde w_n|_{\mathbf B_r(\cF_\cA(x))}(I_r) \asymp_{L_0}  e^{-n_0(I)H_\cF}.
	$$
	This finishes the proof of Lemma \ref{l.w3}.
\end{proof}

\begin{proof}[Proof of Proposition \ref{p.2}]
	Combining Lemma \ref{l.w2} and Lemma \ref{l.w3} we have that 
	$$
	\mu|_{\mathbf B_r(\cF_\cA(x))}(I_r) \asymp_{L_0}  e^{-n_0(I)H_\cF}.
	$$
	Then, Equation \ref{e.conditional} shows that 
	$$
	\mu_x(I)\asymp_{L_0}  e^{-n_0(I)H_\cF}.
	$$
	This finishes the proof of Proposition \ref{p.2}.
\end{proof}

Now Theorem \ref{t.1} is a direct consequence of Proposition \ref{p.2} and the Gibbs property of $\nu$ (Theorem \ref{t.properties}  (2)). It also follows from the Gibbs property that the support of any ergodic measure of maximal $\cF$-entropy must consist of entire $\cF$-leaves.

\section{Rigidity}
In this section we consider the case when the weak Margulis measures in $\Gamma(D)$ are equivalent to the leaf volume. Under this assumption we obtain rigidity: the log Jacobian of $f$ along $\cF$ must be cohomologous to the constant $H_\cF$, the topological entropy along $\cF$.  
\begin{theorem}\label{t.2}
	Let $f$ be a $C^{1+\alpha}$ diffeomorphism preserving an expanding foliation $\cF$ with homogeneous growth. Assume that $\mu$, an ergodic measure of maximal $\cF$-entropy, is also a Gibbs $\cF$-state of $f$. Then there exists a $\mu$ measurable function $\phi$ satisfying the cohomological equation
	\begin{equation}\label{e.ch}
		\log |D_xf|_{T_x\cF}|  = \phi\circ f(x) - \phi(x) + H_\cF 
	\end{equation} 
	at $\mu$ almost every point. 
\end{theorem}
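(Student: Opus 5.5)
The plan is to compare two descriptions of the conditional measures $\mu_x=\mu^{\cF,\cA}_x$: the Gibbs $\cF$-state description, and the weak Margulis description from Theorem~\ref{t.1}. Since $\mu$ is a Gibbs $\cF$-state, for $\mu$-a.e.\ $x$ the measure $\mu_x$ is equivalent to $\Leb_{\cF_\cA(x)}$. For a.e.\ $x$, its density $\rho_x$ is given by the standard formula
$$
\rho_x(z)\propto \prod_{k\ge1}\frac{J(f^{-k}(x))}{J(f^{-k}(z))},\qquad J(w)=|D_wf|_{T_w\cF}|.
$$
By $C^{1+\alpha}$ distortion, this density is bounded above and below on $\cF_\cA(x)$, continuous, and uniformly so in $x$. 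Combining with Theorem~\ref{t.1}, $d\nu/d\Leb_{\cF_\cA(x)}\in[c^{-1},c]$ for every $\nu\in\Gamma(\cF_\cA(x))$, with $c$ uniform in $x$. Feeding this into the Gibbs property gives, for every subdisk $I\subset\cF_\cA(x)$,
$$
|I|_\cF\asymp e^{-n_0(I)H_\cF}.
$$

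The natural candidate for the transfer function comes from comparing $\mu_x$ with $\mu_{f(x)}$. By invariance of $\mu$ and uniqueness of disintegration (as in the proof of Lemma~\ref{l.b1}), $f_*\mu_x$ and $\mu_{f(x)}$ are proportional on $f(\cF_\cA(x))\cap\cF_\cA(f(x))$. Writing both in Lebesgue densities, we get, for $\mu$-a.e.\ $x$,
$$
\frac{\rho_x(x)}{J(x)}=c(x)\,\rho_{f(x)}(f(x)),
$$
where $c(x)=\mu_x\big(f^{-1}\cF_\cA(f(x))\big)$ is a measurable normalizing constant. Hence
$$
\log J(x)=\log\rho_x(x)-\log\rho_{f(x)}(f(x))-\log c(x).
$$
So $\log J$ is cohomologous (with a bounded measurable transfer function $\psi(x)=-\log\rho_x(x)$) to $-\log c(x)$. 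The remaining task is to show that $-\log c$ is itself cohomologous to $H_\cF$ via a measurable function.

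This is the step I expect to be the main obstacle. The approach is to rewrite $c(x)$ via the weak Margulis structure instead of Lebesgue. Fix, for each partition element, a reference measure $\nu_x\in\Gamma(\cF_\cA(x))$. By Theorem~\ref{t.1}, $\mu_x=g_x\,\nu_x$ with $g_x\in[L_1^{-1},L_1]$. Lemma~\ref{l.jac} and Lemma~\ref{l.inv} give $f_*\nu_x=e^{-H_\cF}\theta_x\,\nu_{f(x)}$ on the overlap, with $\theta_x\in[L^{-2},L^2]$ bounded. Repeating the previous comparison with $\nu$ in place of Lebesgue expresses $-\log c(x)$ as $H_\cF$ plus a coboundary of the bounded function $\log g$, plus the term $\log\theta_x$.

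The difficulty is that $\theta_x$ is only bounded and not a coboundary, and the section $x\mapsto\nu_x$ need not be measurable; the remark after Theorem~\ref{t.properties} warns about this. To get around it, I would work only with quantities that are measurable by construction. Let $S_n=\sum_{k<n}(\log J\circ f^k-H_\cF)$ be the Birkhoff sums. Iterating the two displays above shows that $S_n(x)$ equals a bounded measurable coboundary plus an error that is uniformly bounded in $n$. Concretely, $e^{S_n(x)}$ is comparable to $|f^n I|_\cF/|I|_\cF\cdot e^{-nH_\cF}$ for small $I\ni x$, and this is $\asymp 1$ by the estimate $|I|_\cF\asymp e^{-n_0(I)H_\cF}$ from the first paragraph together with $n_0(f^nI)=n_0(I)-n$. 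So $\sup_n|S_n(x)|<\infty$ for $\mu$-a.e.\ $x$, with a uniform bound. By the measurable Gottschalk--Hedlund type criterion (bounded Birkhoff sums for an ergodic invertible system imply a measurable coboundary, e.g.\ $\phi=-\limsup_n S_n$, or a Banach-limit average of $-S_n$), we obtain a $\mu$-measurable $\phi$ satisfying \eqref{e.ch}. I would check first that this limsup choice is measurable and satisfies $\phi\circ f-\phi=\log J-H_\cF$ exactly, which follows from $S_{n+1}=\log J-H_\cF+S_n\circ f$.
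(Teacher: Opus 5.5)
Your argument is correct, but it takes a genuinely different route from the paper. The obstacle you describe (no measurable choice of $x\mapsto\nu_x$, and a bounded factor $\theta_x$ that need not be a coboundary) is exactly what the paper sidesteps by never choosing reference measures. It replaces $\nu(B)$ by the measurable proxy $e^{-n_0(B)H_\cF}$, which transforms \emph{exactly} under $f$ because $n_0(f(B))=n_0(B)-1$. It then defines $\phi(x)$ as the $\limsup$, over $\cF$-disks $B\ni x$ with $|B|_\cF\to 0$, of $\log\bigl(|B|_\cF\, e^{n_0(B)H_\cF}\bigr)$. The cohomological equation follows from that identity and $|f(B)|_\cF/|B|_\cF\to\lambda_x$, with no error term. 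The only analytic input is that $\phi$ is finite a.e., which uses just positivity of the Gibbs density on the sets $K_\delta$, Theorem~\ref{t.1}, and Lebesgue differentiation. The price is an analytic-set argument for $\mu$-measurability and a transfer function that is merely a.e.\ finite.

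You instead pay with a stronger input: the canonical form $\prod_{k\ge1}J(f^{-k}x)/J(f^{-k}z)$ of the densities of a Gibbs $\cF$-state. This is a Ledrappier-type theorem that you should cite, and it needs bounded distortion along $\cF$, as does \eqref{e.dist} in the paper. It gives the \emph{uniform} estimate $|I|_\cF\asymp e^{-n_0(I)H_\cF}$, hence $\sup_n|S_n|\le C$ $\mu$-a.e., and then $\phi=-\limsup_n S_n$ is a bounded Borel solution. This buys more than the paper. Each $S_n$ is continuous, so $|S_n|\le C$ holds on all of $\supp\mu$. At a periodic point $p\in\supp\mu$ of period $m$, boundedness of $S_{km}(p)=kS_m(p)$ forces $|D_pf^m|_{T_p\cF}|=e^{mH_\cF}$. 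This yields periodic-point conclusions like those in Section~\ref{s.5} (at periodic points in $\supp\mu$) without a Liv\v{s}ic-type upgrade of a measurable solution.

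Two minor points. First, your second and third paragraphs (the constants $c(x)$, the sections $\nu_x$, the factors $\theta_x$) are never used. The proof is your first paragraph together with the Birkhoff-sum argument at the end. Second, to apply the plaque estimate to $f^nI$ you need $f^nI\subset\cF_\cA(f^nx)$. This holds for $I$ small enough, because each $\mu_y$ is non-atomic, so for $\mu$-a.e.\ $x$ every $f^nx$ lies in the leafwise interior of its plaque.

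The canonical-density theorem can even be avoided. Set $\kappa(x)=\lim_{I\to x}|I|_\cF/\mu_x(I)$; then Proposition~\ref{p.2} gives $S_n=\log\kappa\circ f^n-\log\kappa+O(1)$. Taking the $\limsup$ of $S_n$ only along times $n$ with $f^nx\in\{M^{-1}\le\kappa\le M\}$ still produces a measurable solution. Note that the paper's $\phi$ is itself $\log\kappa+O(1)$; its definition is designed precisely so that this $O(1)$ disappears.
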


\subsection{Constructing a measurable function $\phi$}

For definiteness of conditional measures of $\mu$, we will keep using the measurable partition $\cF_\cA$ constructed in Section \ref{s.3}.
Recall that $\cB_{k}, k=1,\cdots, N_0$ is a collection of foliation boxes covering $\bM$, and each local leaf $\cF_{loc}(x)$, the connected component of $B_i\cap \cF(x)$ containing $x$, is a one-dimensional segment and therefore orientable (although $f$ may not preserve this orientation). 


Now let $\mu$ be an ergodic measure of maximal $\cF$-entropy which is also a Gibbs $\cF$-state. This means that the conditional measures of $\mu$ on $\cF$-disks are absolutely continuous with respect to the leafwise Lebesgue measure $\Leb_{\cF_\cA(x)}$, and  the density $\frac{d\mu_x}{d\Leb_{\cF_\cA(x)}}$ is positive for $\mu$-almost every $x$. By Theorem \ref{t.1} we have that measures in $\Gamma(\cF_\cA(x))$, at typical points of $\mu$, are also absolutely continuous with respect to $\Leb_{\cF_\cA(x)}$.
This invites us to define the following function:
	$$
	\phi(x): = \lim_{n\to 0}\sup_{x\in B, |B|_\cF<1/n}  \log\left(\frac{|B|_\cF}{e^{-n_0(B)H_\cF}}\right),
	$$
where the supremum is taken over all open $\cF$-disks $B$, containing $x$, with diameter less than $1/n$. The limit exists because of monotonicity. 	
 We claim that $\phi$ is $\mu$-measurable. The proof is similar to the proof of the universal measurability of $\mathcal B_\ell$ in Section \ref{s.3}. We set 
	$$
	F_n(x) = \sup_{x\in B, |B|_\cF<1/n}  \log\left(\frac{|B|_\cF}{e^{-n_0(B)H_\cF}}\right),
	$$
and note that each $\cF$-disk $B$ containing a point $y\in\bM$ can be uniquely identified as a triple $B_{y,r_1,r_2} := (y, r_1,r_2)$ with $r_1,r_2\in \RR^+$ being the distance between $y$ and the left and right endpoint of $B$. This invites us to define
$$
h(y,r_1,r_2) =  \log\left(\frac{|B_{y,r_1,r_2}|_\cF}{e^{-n_0(B_{y,r_1,r_2})H_\cF}}\right) = \log\left(\frac{r_1+r_2}{e^{-n_0(B_{y,r_1,r_2})H_\cF}}\right).
$$
It is easy to see that $h:\bM\times\RR\times\RR \to\RR$ is Borel measurable. Let $\pi_1:\bM\times\RR\times\RR\to \RR$ be the projection to the first coordinate. Then, 
$$
\{F_n(x)>a\} = \pi_1\left\{(y,r_1,r_2): r_1 + r_2 <1/n, h(y,r_1,r_2) >a \right\}
$$
is an analytic set and therefore $\mu$-measurable. It follows that the function $F_n(x)$ is $\mu$-measurable for every $n$, and the same holds for $\phi.$


Next we show that $\phi$ is $\mu$-almost everywhere finite. To this end we take any $\delta>0$ and choose a compact set $K_\delta$ such that $\mu(K_\delta)> 1-\delta$, and a constant $C_\delta>0$ such that. 
\begin{equation}\label{e.bdd1}
\frac{d\mu_x}{d\Leb_{\cF_\cA(x)}}\ge C_\delta>0, \,\, \forall x\in K_\delta.
\end{equation}
 Recall from Theorem \ref{t.1} that at $\mu$ almost every point $x$, the conditional measure $\mu_x : = \mu_x^{\cF,\cA}$ is equivalent to any reference measure $\nu\in\Gamma(\cF_\cA(x))$ with Radon-Nikodym satisfying $\frac{d\mu_x}{d\nu}\asymp_{L_1} 1$ where $L_1>1$ is a constant. Combining this with \eqref{e.bdd1}, we see that $\frac{d\Leb_{\cF_\cA(x)}}{d\nu}\le L_1 C_\delta^{-1}$ for $x\in K$. The Lebesgue-Besicovitch Density Theorem yields 
 	$$
 	\lim_{|I|_\cF\to 0}\,\frac{|I|_\cF}{\nu(I)} \le L_1 C_\delta^{-1},
 	$$ 	
 where $I$ is any $\cF$-disk containing $x$.	
 It then follows from the Gibbs property of $\nu$ (Proposition \ref{p.ac4}) that for every $\cF$-disk $B\ni x$ sufficiently small, the function
$$
\psi(x) = \frac{|B|_\cF}{e^{-n_0(B)H_\cF}}
$$
must be bounded by the constant $2C_\delta^{-1} \cdot L_1\cdot \tilde L$ where $\tilde L> 1$ is the constant given by Proposition \ref{p.ac4}. This shows that $\phi(x)$ is bounded on $K_\delta$ for every $\delta>0$; therefore $\phi$ is  finite almost everywhere.

\subsection{$\phi$ solves the cohomological equation}
Below we show that $\phi$ is a solution to the cohomological equation \eqref{e.ch}. 
To this end, for $\mu$-almost every $x\in \bM$ we write 
$$
\lambda_x : =| D_xf|_{T_x\cF}|.
$$
We fix $x$ and take a sequence of $\cF$-disks $B_n\ni x$ with $|B_n|_\cF <1/n$ and 
$$
\log\left(\frac{|B_n|_\cF}{e^{-n_0(B_n)H_\cF}}\right) > F_n(x) - 1/n.
$$
This means that  
\begin{equation}\label{e.rn}
\phi(x) = \lim_{n\to \infty} \log\left(\frac{|B_{n}|_\cF}{e^{-n_0(B_n)H_\cF}}\right).
\end{equation}

At $y = f(x)$ we 
define
\begin{equation*}\label{e.tp}
\widetilde \phi(y) = \lim_{n\to \infty} \log\left(\frac{|f(B_{n})|_\cF}{e^{- n_0(f(B_n))H_\cF}}\right),
\end{equation*}
where the sequence of disks $(B_n)$ are chosen as $x = f^{-1}(y)$ as before.

\begin{lemma}\label{l.r1}
	The limit defining $\widetilde\phi(y)$ exists and coincides with $\phi(x) + \log\lambda_x - H_\cF$. In particular, $\widetilde\phi$ is $\mu$-measurable and independent of the choice of the sequence $(B_n)$.
\end{lemma}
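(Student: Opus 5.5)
The argument is a direct two-term computation. For each $n$ we split
$$
\log\left(\frac{|f(B_n)|_\cF}{e^{-n_0(f(B_n))H_\cF}}\right) = \log\frac{|f(B_n)|_\cF}{|B_n|_\cF} + \log\left(\frac{|B_n|_\cF}{e^{-n_0(B_n)H_\cF}}\right) + \big(n_0(f(B_n)) - n_0(B_n)\big)H_\cF .
$$
The second term tends to $\phi(x)$ by \eqref{e.rn}. For the third term, note that $|B_n|_\cF<1/n\le 1$ eventually, so $n_0(B_n)\ge 1$. Hence $n_0(f(B_n)) = n_0(B_n)-1$ exactly, by the definition \eqref{e.n}; this is the same identity already used in the proof of Lemma \ref{l.b1}. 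The third term is therefore identically $-H_\cF$.

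It remains to show that the first term tends to $\log\lambda_x$. Since $B_n\ni x$ and $|B_n|_\cF\to 0$, the disks $B_n$ shrink to $x$ inside the leaf $\cF(x)$. By the mean value theorem along the one-dimensional $C^1$ leaf,
$$
\frac{|f(B_n)|_\cF}{|B_n|_\cF} = \frac{1}{|B_n|_\cF}\int_{B_n} |D_zf|_{T_z\cF}|\, d\Leb_\cF(z).
$$
This is an average of $z\mapsto |D_zf|_{T_z\cF}|$ over $B_n$. That function is continuous on $\bM$, because $f$ is $C^1$ and $T\cF$ is a continuous line field. The average therefore lies within $\sup_{z\in B_n}\big||D_zf|_{T_z\cF}|-\lambda_x\big|\to 0$ of $\lambda_x$, and since $\lambda_x>\lambda>1$ by \eqref{e.exp}, the logarithm converges to $\log\lambda_x$. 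Only the $C^1$ hypothesis is needed here; no distortion estimate enters.

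Adding the three limits shows that the limit defining $\widetilde\phi(y)$ exists and equals $\phi(x)+\log\lambda_x-H_\cF$. The right-hand side does not depend on $(B_n)$, because $\phi(x)$ is defined intrinsically as a limit of suprema. Measurability follows from the same formula: $\phi$ is $\mu$-measurable by the previous subsection, $x\mapsto\log\lambda_x$ is continuous, and composing with $f^{-1}$ preserves $\mu$-measurability since $\mu$ is $f$-invariant.

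The only delicate point is that \eqref{e.rn} is known just for the near-maximizing sequence $(B_n)$, and $f(B_n)$ need not be near-maximizing for the supremum defining $\phi(y)$. This is exactly why the lemma is stated for $\widetilde\phi$ and not for $\phi(f(x))$, so it does not obstruct the present proof. In the subsequent step, showing $\widetilde\phi=\phi$ almost everywhere, this is where I expect the real obstacle to lie.
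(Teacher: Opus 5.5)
Your proof is correct and follows the paper's argument. It uses the same factorization into the length ratio $|f(B_n)|_\cF/|B_n|_\cF\to\lambda_x$, the factor $|B_n|_\cF/e^{-n_0(B_n)H_\cF}\to e^{\phi(x)}$ from \eqref{e.rn}, and the constant $e^{-H_\cF}$ from $n_0(f(B_n))=n_0(B_n)-1$, while making explicit what the paper leaves implicit: $|B_n|_\cF<1$ forces $n_0(B_n)\ge 1$, the ratio converges by continuity of $z\mapsto|D_zf|_{T_z\cF}|$ along the $C^1$ leaf, and measurability transfers through $f^{-1}$ by $f$-invariance of $\mu$.
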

\begin{proof}
	Note that $n_0(f(B_n))= n_0(B_n) -1$. We write 
	\begin{align*}
		\frac{|f(B_n)|_\cF}{e^{- n_0(f(B_n))H_\cF}} &=  \frac{|f(B_n)|_{\cF}}{|B_n|_\cF} \cdot  \frac{|B_n|_\cF}{e^{- n_0(f(B_n))H_\cF}}\\
		&= \frac{|f(B_n)|_{\cF}}{|B_n|_\cF} \cdot  \frac{|B_n|_\cF}{e^{ -n_0(B_n) H_\cF}}\cdot e^{-H_\cF}.
	\end{align*}
	The first term converges to the unstable Jacobian of $f$ at $x$, which is $\lambda_x$; the second term converges to $e^{\phi(x)}$ due to our choice of $B_n$ (see \eqref{e.rn}). The lemma follows. 
\end{proof}

\begin{lemma}\label{l.r2}
	At $\mu$-almost every $x$ we have 
	$$
	\widetilde\phi(f(x)) = \phi(f(x)). 
	$$
\end{lemma}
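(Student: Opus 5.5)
The plan is to prove both inequalities $\widetilde\phi(f(x))\le\phi(f(x))$ and $\widetilde\phi(f(x))\ge\phi(f(x))$ directly from the definition of $\phi$ as a limit of suprema. We use Lemma \ref{l.r1} to identify $\widetilde\phi(f(x))=\phi(x)+\log\lambda_x-H_\cF$. The argument is pointwise: it works at every $x$ for which both $\phi(x)$ and $\phi(f(x))$ are finite. By the finiteness established above and the invariance of $\mu$, this is a set of full $\mu$-measure.

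\emph{Upper bound.} Let $(B_n)$ be the sequence chosen at $x$ as in \eqref{e.rn}. Each $f(B_n)$ is an open $\cF$-disk containing $f(x)$, and by \eqref{e.exp} $|f(B_n)|_\cF\le K_f|B_n|_\cF<K_f/n$. Given $m$, for all $n$ large we have $|f(B_n)|_\cF<1/m$, so
$$
\log\left(\frac{|f(B_n)|_\cF}{e^{-n_0(f(B_n))H_\cF}}\right)\le F_m(f(x)).
$$
Letting $n\to\infty$ and using Lemma \ref{l.r1}, we get $\widetilde\phi(f(x))\le F_m(f(x))$ for every $m$. Letting $m\to\infty$ gives $\widetilde\phi(f(x))\le\phi(f(x))$.

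\emph{Lower bound.} We run the same argument with $f^{-1}$. Choose disks $B'_n\ni f(x)$ with $|B'_n|_\cF<1/n$ and
$$
\log\left(\frac{|B'_n|_\cF}{e^{-n_0(B'_n)H_\cF}}\right)>F_n(f(x))-1/n,
$$
so that this quantity tends to $\phi(f(x))$. The disks $f^{-1}(B'_n)$ contain $x$, and $|f^{-1}(B'_n)|_\cF\le\lambda^{-1}|B'_n|_\cF$, so they are admissible in $F_m(x)$ for large $n$. Since $|B'_n|_\cF<1$ for $n\ge 2$, we have $n_0(B'_n)\ge 1$, and hence $n_0(f^{-1}(B'_n))=n_0(B'_n)+1$. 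Because $f$ is $C^1$ and $B'_n$ shrinks to $f(x)$, we also have $|f^{-1}(B'_n)|_\cF/|B'_n|_\cF\to\lambda_x^{-1}$. Writing
$$
\frac{|f^{-1}(B'_n)|_\cF}{e^{-n_0(f^{-1}(B'_n))H_\cF}}=\frac{|f^{-1}(B'_n)|_\cF}{|B'_n|_\cF}\cdot\frac{|B'_n|_\cF}{e^{-n_0(B'_n)H_\cF}}\cdot e^{H_\cF},
$$
and passing to the limit, we obtain $\phi(x)\ge\phi(f(x))-\log\lambda_x+H_\cF$. Equivalently, $\phi(f(x))\le\phi(x)+\log\lambda_x-H_\cF=\widetilde\phi(f(x))$.

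Combining the two bounds gives the lemma at every point where both values are finite. The only delicate step is this finiteness: if $\phi(f(x))=+\infty$, the lower-bound argument forces $\phi(x)=+\infty$, and the inequalities degenerate. This is handled by the a.e. finiteness of $\phi$ proved above, applied at both $x$ and $f(x)$, since $f$ preserves $\mu$. One also needs to note that the limits in the displays exist: the middle factor converges by the choice of $B'_n$, and the first converges by $C^1$ continuity of $Df|_{T\cF}$. No distortion estimate beyond $C^1$ is required here.
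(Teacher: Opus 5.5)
Your proposal is correct and follows essentially the paper's route. You get $\widetilde\phi(f(x))\le\phi(f(x))$ by pushing the near-optimal disks $B_n$ forward, which makes them admissible for $F_m(f(x))$. You get the reverse inequality by pulling back near-optimal disks at $f(x)$, using $n_0(f^{-1}(B))=n_0(B)+1$ and $|f^{-1}(B)|_\cF/|B|_\cF\to\lambda_x^{-1}$, which is exactly the content of the paper's Claims 1 and 2.

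Your version is slightly tidier in two places. Choosing a near-optimal sequence at $f(x)$ directly replaces the paper's ``arbitrary convergent sequence'' formulation. Bounding $|f(B_n)|_\cF<K_f/n$ also avoids the misindexed $F_{\lceil 2\lambda_x n\rceil}(x)$ step in the paper.
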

\begin{proof}
	First we will show that for any sequence of $\cF$-disks $\widetilde B_n\ni f(x)$ with $|\widetilde B_n|_\cF<1/n$ such that the limit 
	\begin{equation}\label{e.adef}
	A = A(\{\widetilde B_n\}):=\lim_{n\to\infty} \frac{|\widetilde B_{n}|_\cF}{e^{-n_0(\widetilde B_n)H_\cF}}	
	\end{equation}
	exists, one must have 
	\begin{equation*}\label{e.A}
		A\le \widetilde\phi(f(x)). 
	\end{equation*}
	The proof is divided into the following two claims.
		
	\medskip 
	\noindent {\em Claim 1.}
	The limit 
	\begin{equation}\label{e.ta}
		\widetilde A = \lim_{n\to \infty} \log\left(\frac{|f^{-1}(\widetilde B_n)|_\cF}{e^{-n_0(f^{-1}(\widetilde B_n))H_\cF}}\right).
	\end{equation}
	exists and satisfies $\widetilde A = A - \log\lambda_x + H_\cF.$
	
	\begin{proof}[Proof of Claim 1]
		The proof is essentially the same as the proof of Lemma \ref{l.r1} but in the reserved direction. We write
		\begin{align*}
			\frac{|f^{-1}(\widetilde B_n)|_\cF}{e^{-n_0(f^{-1}(B_{\tau_n,*}^\cF(f(x))))H_\cF}} &=  \frac{|f^{-1}(\widetilde B_n)|_{\cF}}{|\widetilde B_n|_\cF} \cdot  \frac{|\widetilde B_n|_\cF}{e^{-n_0(f^{-1}(\widetilde B_n))H_\cF}}\\
			&= \frac{|f^{-1}(\widetilde B_n)|_{\cF}}{|\widetilde B_n|_\cF}  \cdot  \frac{|\widetilde B_n|_\cF}{e^{ -n_0(\widetilde B_n) H_\cF}}\cdot e^{H_\cF};
		\end{align*}
	here to obtain the second equality, we use the fact that $n_0(f^{-1}(\widetilde B_n)) = n_0(\widetilde B_n) + 1.$
	
	Note that as $|B_n|_\cF\to 0,$ the first term converges to $|D_{f(x)}f^{-1}|_{T_{f(x)}\cF}|$ which is $1/\lambda_x$. The second term converges to $e^A$ by \eqref{e.adef}. This shows that $e^{\widetilde A} = e^{A+H_\cF}/\lambda_x$, and the claim follows by taking logarithm. 
	
	\end{proof}

	\medskip 
	\noindent {\em Claim 2.} For any sequence $\{\widetilde B_m\}$ containing $f(x)$ with $|\widetilde B_m|_\cF <1/m$ such that the limit $\widetilde A$ defined by \eqref{e.ta} exists, we have $\widetilde A\le \phi(x).$
	
	\begin{proof}[Proof of Claim 2.]
		We observe that $f^{-1}(\widetilde B_n)$ is an $\cF$-disk containing $x$. Since $f$ is uniformly expanding on each leaf of $\cF$, we have $|f^{-1}(\widetilde B_n)|_\cF < |\widetilde B_n|_\cF < 1/n$. By the definition of $\phi(x)$, we have  
		$$
		\widetilde A = \lim_{n\to\infty} \log\left(\frac{|f^{-1}(\widetilde B_n)|_\cF}{e^{-n_0(f^{-1}(\widetilde B_n)) H_\cF}}\right) \le \lim_{n\to\infty } F_n(x) = \phi(x),
		$$
		as claimed. 
	\end{proof}
	
	Combining Claim 1, Claim 2 and Lemma \ref{l.r1}, we obtain
	$$
	A-\log\lambda_x + H_\cF = \widetilde A \le \phi(x) = \widetilde\phi(f(x)) -\log\lambda_x +H_\cF,
	$$
	that is,  $A\le \widetilde \phi(f(x))$. Since the sequence $\{\widetilde B_n\}$ in the definition of $A$ were taken arbitrarily, we conclude that 
	\begin{equation}\label{e.ge}
		\phi (f(x))\le \widetilde \phi (f(x)).
	\end{equation}
	
	It remains to derive the reversed inequality. The proof is similar to the proof of Claim 2. Note that for each $\cF$-disk $B\ni x$ with $|B|_\cF < 1/n,$ $f(B)$ is an $\cF$-disk containing $f(x)$ with $|f(B)|_\cF< 2\lambda_x/n$ (as long as $n$ is sufficiently large). This shows that 
	\begin{equation}\label{e.le}
		\tilde\phi(f(x)) =\lim_{n\to\infty} \log \frac{|f(B_n)|_\cF}{e^{n_0(f(B_n))H_\cF}}\le \lim_{n\to\infty}F_{\lceil 2\lambda_x n\rceil}(x) =  \phi(f(x)).		
	\end{equation}
	Here $\lceil a\rceil$ denotes the smallest integer greater than $a.$ 
	Combining \eqref{e.ge} and \eqref{e.le} yields $\tilde\phi(f(x)) = \phi(f(x))$. The proof of Lemma \ref{l.r2} is now complete.
\end{proof}

Combining Lemma \ref{l.r1} and \ref{l.r2}, we obtain
$$
\phi(f(x)) = \phi(x)+\log\lambda_x - H_\cF,
$$
that is, $\phi$ is a $\mu$-measurable solution to the cohomological equation \eqref{e.ch}. This concludes the proof of Theorem \ref{t.2}.

\section{Transversal holonomy}\label{s.holonomy}
In this section we consider the absolute continuity of holonomy maps induced by a transversal foliation $\widetilde\cF$. For the sake of clarity, we will write $\cF_r(x)$ instead of $B_r^\cF(x)$ in previous sections for the $\cF$-disk of radius $r$ centered at $x$; $\widetilde\cF_r(x)$ is defined similarly. 

\begin{definition}\label{d.TF}
	Given an expanding foliation $\cF$ with one-dimensional leaves, we say that a foliation $\widetilde \cF$ is a transversal foliation, if:
	\begin{enumerate}
		\item (Regularity) $\widetilde\cF$ is a continuous foliation with $C^1$ leaves. 
		\item (Invariance) $f$ maps $\widetilde\cF$-leaves to $\widetilde\cF$-leaves.
		\item (Transversality) $\widetilde \cF$ is transversal to $\cF$ at every point.
	\end{enumerate}
\end{definition}
It follows that $\widetilde\cF$-leaves are $(\dim\bM-1)$-dimensional. Furthermore, $\widetilde\cF$ and $\cF$ form a {\em (topological) local product structure}: there exists $\delta>0$ such that for every $x\in \bM$, every $y\in \cF_\delta(x)$ and $z\in \widetilde\cF_\delta(x)$ satisfies that $\widetilde\cF_{4\delta} (y)$ and $\cF_{4\delta}(z)$ intersection transversely at a unique point, which we will denote by $[y,z].$ Furthermore, this map is surjective in the sense that every point in a small neighborhood of $x$ can be written uniquely as $[y,z]$ for some $y\in \cF_\delta(x)$ and $z\in \widetilde\cF_\delta(x)$.

Next we consider local holonomy maps induced by a transversal foliation. Let $\delta>0$ be as above. Given $x\in\bM$ and $y,z\in \widetilde\cF_\delta(x)$, we define the holonomy $H^{loc}_{y,z}:\cF_\delta(y)\to \cF_\delta(z)$ as the map that sends each point $p\in \cF_\delta(y)$ to the unique transversal intersection between $\widetilde \cF_{4\delta}(p)$ and $\cF_{4\delta}(z)$. The goal of this section is to show that under proper conditions, $(H^{loc}_{y,z})_*$ is absolutely continuous with respect to measures in $\Gamma (\cF_{\delta}(y))$ and $\Gamma( \cF_{\delta}(z))$, with uniformly controlled Jacobian.

\begin{definition}\label{d.NE}
	 We say that $f$ is not expanding along a transversal foliation $\widetilde \cF$, if there exists two positive constants $\delta_1 < R$ such that for every $x\in \bM$ and $n\in \NN$, it holds that 
	 $$
	 f^n(\widetilde\cF_{\delta_1}(x))\subset \widetilde\cF_{R}(f^n(x)).
	 $$ 
\end{definition}
Note that $R$ is possibly very large (even when compared to the diameter of the manifold). Being not expanding is weaker than Lyapunov stability (in which case $R$ must be arbitrary), and is satisfied by many systems. One of them is the perturbation of time-one maps of geodesic flows, as we will see in Section \ref{s.geodesic}.

\begin{theorem}\label{t.holonomy}
	Let $\cF$ be an expanding foliation for a $C^{1+\alpha}$ diffeomorphism $f$ with homogeneous growth, and $\widetilde\cF$ is a transversal foliation along which $f$ is not expanding. Then there exists $L_H>1$ such that for all $x\in\bM,$ $y,z\in \widetilde\cF_\delta(x)$ and every $\nu_y\in \Gamma(\cF_\delta(y))$, it holds that $(H^{loc}_{y,z})_*\nu_y$ is absolutely continuous with respect to any $\nu_z\in \Gamma(\cF_\delta(z))$, with 
	$$
	\frac{d(H^{loc}_{y,z})_*\nu_y}{d\nu_z}\asymp_{L_H} 1.
	$$ 
\end{theorem}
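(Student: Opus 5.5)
The plan is to reduce the statement, as in the proofs of Lemmas \ref{p.ac2} and \ref{l.jac}, to a comparison of $n_0$-values on sub-disks. By the Gibbs property (Theorem \ref{t.properties}(2)), together with the fact that $|\cF_\delta(y)|_\cF = 2\delta$ is fixed (so $n_0(\cF_\delta(y))$ is uniformly bounded, as in Proposition \ref{p.ac4}), for every $\cF$-disk $I\subset\cF_\delta(y)$ we have $\nu_y(I)\asymp e^{-n_0(I)H_\cF}$. Likewise $\nu_z(J)\asymp e^{-n_0(J)H_\cF}$ for $J\subset\cF_\delta(z)$, with uniform constants. Since $H^{loc}_{y,z}$ is a homeomorphism between one-dimensional disks, it maps sub-disks to sub-disks. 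It therefore suffices to find a constant $c>0$, independent of $x,y,z,I$, with
$$
|n_0(I) - n_0(H^{loc}_{y,z}(I))|\le c \quad\text{for every $\cF$-disk } I\subset \cF_\delta(y).
$$
Then $(H^{loc}_{y,z})_*\nu_y(J)\asymp \nu_z(J)$ for every sub-disk $J$, which gives absolute continuity with a Radon--Nikodym derivative in $[L_H^{-1},L_H]$ (by density of intervals, as in Lemma \ref{p.ac2}).

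To prove the bounded $n_0$ discrepancy, let $J=H^{loc}_{y,z}(I)$ and $n=n_0(I)$, so $|f^n(I)|_\cF\in[1,K_f)$. The endpoints of $I$ and $J$ are joined by $\widetilde\cF$-plaques of size at most $\delta$; shrinking $\delta$ below $\delta_1$ if necessary, we may use Definition \ref{d.NE}. Invariance of $\widetilde\cF$ then gives that the endpoints of $f^n(I)$ and $f^n(J)$ are joined by $\widetilde\cF$-plaques of size at most $R$, and $f^n(J)$ is the image of $f^n(I)$ under the holonomy along $\widetilde\cF$ (this requires checking that the holonomy commutes with $f$, which follows from invariance of both foliations and continuity along the disk). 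The key geometric input is the following: by compactness and transversality, there is a constant $M=M(R)$ such that any $\cF$-disk of length in $[1,K_f)$, pushed along $\widetilde\cF$-paths of length at most $R$, lands on an $\cF$-disk of length in $[M^{-1},M]$.

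Given this, homogeneous growth finishes the argument. Cut $f^n(J)$, or extend it by a bounded number of iterates, to compare it with a disk of length in $[1,K_f)$. Definition \ref{d.HG} together with the expansion bounds \eqref{e.exp} then yields $|n_0(f^n(J))|\le c(M)$, hence $|n_0(J)-n|\le c$. Symmetrically, applying the same reasoning at time $n_0(J)$ with the roles of $y,z$ exchanged gives the reverse inequality.

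The main obstacle is the uniform length comparison under $\widetilde\cF$-holonomy at scale $R$. Here $R$ may be much larger than the local product scale, so this is not a local holonomy. My first approach is to cover the $\widetilde\cF$-path of length $R$ by a bounded number $N(R)$ of local product charts and compose local holonomies. Each local holonomy sends $\cF$-disks of length comparable to $1$ to $\cF$-disks of comparable length; only continuity is needed here, because lengths are bounded below and we need just $\asymp$ bounds at unit scale, not at small scales. This makes the comparison uniform with constant depending only on $R$. I also expect care to be needed in ensuring that the image disks stay inside the domains where holonomy is defined. This can be handled by subdividing $f^n(I)$ into boundedly many pieces of length less than the product scale.
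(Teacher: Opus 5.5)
Your proposal is correct and is essentially the paper's proof. Both arguments run the same way:
\begin{enumerate}
\item reduce via the Gibbs property to a uniform bound on $|n_0(I)-n_0(H^{loc}_{y,z}(I))|$;
\item iterate until a disk reaches the reference scale;
\item use non-expansion to keep all holonomy partners within $\widetilde\cF_R$;
\item conclude from a uniform lower bound on lengths of holonomy images at that scale (the paper's Lemma \ref{l.iota}).
\end{enumerate}
The differences are only bookkeeping. The paper uses $\vep=2\delta$, so that $n_0(\cF_\delta(y))=0$. It iterates to the first time $\tilde n_0=\inf_w n_0(I_w)$ at which some $I_w$ reaches size $\vep$, which gives the two-sided bound at once instead of your pairwise argument plus symmetry. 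It also proves the length bound by compactness and contradiction rather than by chaining $N(R)$ local holonomies.

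One caution applies equally to the paper. The map $f^n\circ H^{loc}_{y,z}\circ f^{-n}$ is the holonomy along the $f^n$-image of a short plaque path. Definition \ref{d.NE} only places that image inside $\widetilde\cF_R$, and it need not be homotopic to a path of length at most $R$ when $\widetilde\cF$ has leaves with nontrivial holonomy. So chaining local holonomies along a short path may compute a different map. The safe form of your key input is a compactness statement that uses only the pointwise displacement bound: if image lengths tended to zero, a limiting $\cF$-disk of length at least $1$ would lie in a compact ball of a single $\widetilde\cF$-leaf, and such a ball meets any $\cF$-disk in only finitely many points.
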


The proof of this theorem occupies the rest of this section. We shall assume from now on that $\delta < \delta_1$, since a local product structure at scale $\delta$ implies the same structure at smaller scales. 

\begin{proof}[Proof of Theorem \ref{t.holonomy}]
	First we recall Remark \ref{r.vep}: instead of requiring $|D|_\cF\ge 1$ when defining homogeneous growth and more importantly $n_0$, we can ask for $|D|_\cF\ge \vep$ for any fixed $\vep>0$. Here we will do so for a fixed $\vep = 2\delta$ where $\delta$ is the scale of the local product structure. 
	
	Let $x,y,z$ be as in the theorem, and note that without loss of generality we may take $z=x$ since $H^{loc}_{y,z} = H^{loc}_{x,z}\circ H^{loc}_{y,x} $. To simplify notation we write $D_w = \cF_\delta(w)$ for any $w\in\widetilde \cF_\delta(x)$. Fix any small $\cF$-disk $I_y\subset \cF_\delta(y)$ and denote by 
	$I_w = H_{y,w}(I_y)\subset \cF_\delta(w)$ for any $w\in \widetilde \cF_\delta(x)$. By the Gibbs property (Theorem \ref{t.properties} (2)), any $\nu_y\in \Gamma(\cF_\delta(y))$ and $\nu_x\in \Gamma(\cF_\delta(x))$ satisfy 
	$$
	\nu_y(I_y)\asymp_L\frac{e^{-n_0(I_y)H_\cF}}{e^{-n_0(D_y)H_\cF}},\,\, \nu_x(I_x)\asymp_L\frac{e^{-n_0(I_x)H_\cF}}{e^{-n_0(D_x)H_\cF}}
	$$
	Note that $|D_y|_\cF = |D_x|_\cF = 2\delta = \vep$, so $n_0(D_y) = n_0(D_x) = 0$. 
	
	The proof of the theorem relies on the following claim:
	
	\noindent {\em Claim.} There exists a constant $N_1\in\NN$, depending only on $f, \vep, \cF$ and $\widetilde \cF$, such that 
	$$
	|n_0(I_w) - n_0(I_x)|\le N_1
	$$
	for all $w\in \widetilde \cF_\delta(x).$
	
	Once this claim is proven, one can immediately obtain that 
	\begin{align*}
		\frac{(H_{y,x})_*(\nu_y)(I_x)}{\nu_x(I_x)} &= \frac{\nu_y(H_{y,x}^{-1}(I_x))}{\nu_x(I_x)}\\
		&= \frac{\nu_y(I_y)}{\nu_x(I_x)}\\
		&\asymp_{L_H} 1,
	\end{align*}
	where $L_H = L^2\cdot e^{N_1H_\cF}$. Since $I_y$ is arbitrary, we get that $	\frac{d(H_{y,x})_*\nu_y}{d\nu_x}\asymp_{L_H}1$ and the proof of the theorem is complete.
	
	It only remains to prove the claim.
	\begin{proof}[Proof of the claim]
		Without loss of generality we assume that $x\in I_x$; otherwise replace $x$ by any $x'\in I_x$ and $y$ by $y'= [x', y]$ and note that $H_{y,x}$ can be identified with $H_{y',x'}$ (shrink the domain if necessary).

		Define 
		$$
		\tilde n_0 = \inf_{w\in\widetilde\cF_\delta(x)} n_0(I_w).
		$$
		In other words, $\tilde n_0$ is the first time that some $I_w$ reaches length $\vep = 2\delta$ under iteration. In particular we have $f^{\tilde n_0}(I_w)< K_f\cdot \vep$ for all $w\in\widetilde \cF_{\delta}(x)$ where $K_f$ is the norm of $Df$ restricted to $T_\cF$ (recall \eqref{e.exp}).
		
		To simplify notation we write $\tilde I_w = f^{\tilde n_0}(I_w)$ and note that since $\delta<\delta_1$, all $\tilde I_w$ are uncentered $\cF$-disks containing $f^{\tilde n_0}(w)\in\widetilde\cF_R(f^{\tilde n_0}(x))$ since the center is not expanding. We claim that $|\tilde I_w|_\cF\ge \iota$ for some $\iota>0$ depending only on $\vep,f,\cF$ and $\widetilde \cF$; the key here is that $\iota$ does not depend on $x,y$, $I_y$ or $\tilde n_0$. Once this is proven, one takes
		\begin{equation}\label{e.N1}
		N_1 = \left\lceil\frac{\log\vep-\log\iota}{\log\lambda}\right\rceil
		\end{equation}
		
		where $\lambda>1$ is given by the mini-norm of $Df$ on $T\cF$ (recall \eqref{e.exp}). This is the maximum time it takes for an $\cF$-disk of size $\iota$ to become at least $\vep$. Then we have that 
		$$
		n_0(I_w)\le n_0(x) + N_1
		$$
		for all $w\in\widetilde\cF_\delta(x)$. After switching $x$ and $w$ we get that 
		$$
		|n_0(I_w)- n_0(x)| \le N_1
		$$
		and the claim follows.
		
		It only remains to show that existence of $\iota$. For this purpose we fix $w_0\in\widetilde \cF_\delta(x)$ (take  closure if necessary) for which $\tilde n_0 = n_0(I_{w_0})$ and define the {\em semi-global holonomy $H_{\tilde I_{w_0},\tilde I_w}: \tilde I_{w_0}\to\tilde I_w$} as follows:
		$$
		H_{\tilde I_{w_0},\tilde I_w}(z) =  f^{\tilde n_0} \circ H_{w_0,y}^{loc} \circ f^{-\tilde n_0}(z)
		$$
		Since $H^{loc}_{w_0,w}(I_{w_0}) = I_w$ we get that $H_{\tilde I_{w_0},\tilde I_w}$ maps $\tilde I_{w_0}$ homeomorphically to $\tilde I_w$ for all $w\in\widetilde\cF_\delta(x)$. Also note that $H_{\tilde I_{w_0},\tilde I_w}$ can be also obtained by gluing together general holonomy maps of small domains in the sense of \cite[Definition (4)]{Plante}. Here a general holonomy $\widetilde H_{x,y}$ is defined for all $x,y$ on the same leaf of $\widetilde \cF$ whose existence in a small $\cF$-neighborhood of $x$ is due to transversality and continuity of the foliations, although the sizes of the domain and the image will depend on $x,y$.  In our  case, $\tilde w_0:= f^{\tilde n_0}(w_0)$ and $\tilde w:=f^{\tilde n_0}(w)$ are both contained in $\widetilde \cF_R(f^{\tilde n_0}(x))$, so $H_{\tilde I_{w_0},\tilde I_w}$ can be treated as extending the domain of $\widetilde H_{\tilde w_0,\tilde w}$ to $\tilde I_{w_0}.$
		
		Therefore the existence of $\iota$ is reduced to the following lemma:
		
		\begin{lemma}\label{l.iota}
			There exists $\iota,\vep_0>0$ such that for all $x\in\bM$ and $w\in \widetilde \cF_R(x)$, the general holonomy $\widetilde H_{x,w}: \cF_{\vep_0}(x)\to \cF(w)$ is well-defined, and its image contains $\cF_{\iota}(w).$
		\end{lemma}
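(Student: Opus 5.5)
The proof is a compactness argument: chain together finitely many uniformly small holonomies along a $\widetilde\cF$-path of bounded length. I will use three local facts, all coming from continuity and transversality of $\cF$ and $\widetilde\cF$.

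First, there is a uniform local product structure at scale $\delta$. For every $p\in\bM$ and $q\in\widetilde\cF_\delta(p)$, the local holonomy $H^{loc}_{p,q}$ is defined on $\cF_\delta(p)$. It is a homeomorphism onto its image in $\cF(q)$.

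Second, this family of maps is equicontinuous, uniformly in $(p,q)$. This holds because the bracket $[\cdot,\cdot]$ is uniformly continuous on the compact set $\{(p,y,z): y\in\cF_\delta(p),\ z\in\widetilde\cF_\delta(p)\}$.

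Third, there is a quantitative inverse version. For every $\vep'\le\delta$ there is $\iota'=\iota'(\vep')>0$ such that, for all $p$ and all $q\in\widetilde\cF_\delta(p)$, the image $H^{loc}_{p,q}(\cF_{\vep'}(p))$ contains $\cF_{\iota'}(q)$. To prove this, apply the inverse holonomy $H^{loc}_{q,p}$, which is also uniformly equicontinuous. Points of $\cF_{\iota'}(q)$ are then sent into $\cF_{\vep'}(p)$ once $\iota'$ is small, because $H^{loc}_{q,p}(q)=p$.

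With these facts, fix $x\in\bM$ and $w\in\widetilde\cF_R(x)$. The leaf $\widetilde\cF(x)$ carries its induced Riemannian metric, and I interpret $\widetilde\cF_R(x)$ as the leafwise ball. Choose a leafwise path from $x$ to $w$ of length at most $R$. Subdivide it into points $x=p_0,p_1,\dots,p_m=w$ with $p_{i+1}\in\widetilde\cF_{\delta}(p_i)$ and $m\le M:=\lceil R/\delta\rceil+1$. The bound $M$ is independent of $x$ and $w$.

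Define the scales backwards: $\vep_m:=\delta$ and $\vep_{i}:=\iota'(\vep_{i+1})$. Because $\iota'$ is a fixed decreasing function, the worst case over $m\le M$ gives $\vep_0:=\iota'^{(M)}(\delta)$, the $M$-fold iterate of $\iota'$. Next, apply equicontinuity forward to choose $\vep_0'$ so small that each successive image $H^{loc}_{p_{i-1},p_i}\circ\cdots\circ H^{loc}_{p_0,p_1}(\cF_{\vep_0'}(x))$ stays inside $\cF_\delta(p_i)$. This guarantees that every composition is defined. Set $\vep_0$ to be the minimum of the two choices.

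The general holonomy is then $\widetilde H_{x,w}=H^{loc}_{p_{m-1},p_m}\circ\cdots\circ H^{loc}_{p_0,p_1}$ on $\cF_{\vep_0}(x)$. It does not depend on the chain chosen, since any two chains at this scale give the same Plante germ on the common domain; if needed, shrink the domain. Now take $\iota$ to be the smallest scale that survives the whole chain. At each step, the image of $\cF_{\vep_{i}}(p_i)$ contains $\cF_{\iota'(\vep_{i})}(p_{i+1})$, and these radii are bounded below by a constant depending only on $M$ and $\delta$. Choosing the $\vep_i$ as above makes the successive containments chain together, so $\widetilde H_{x,w}(\cF_{\vep_0}(x))\supset\cF_\iota(w)$ with $\iota>0$ uniform.

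The main obstacle is bookkeeping the two directions at once: the domain has to shrink to keep the compositions defined, while the image has to stay uniformly large. I would handle this by proving the uniform statement for a single step first, in both directions via equicontinuity of $H^{loc}$ and its inverse. Then I would induct on the number of steps $m\le M$, using only that $M$ is bounded.

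A subtle point is the meaning of $\widetilde\cF_R(x)$. If it is meant extrinsically rather than as a leafwise ball, then the bounded-chain-length argument needs the leafwise distance on $\widetilde\cF_R$-balls to be bounded. I would take this as part of the definition of $\widetilde\cF_R$.
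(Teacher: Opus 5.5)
Your route differs from the paper's, and it is correct in substance once two slips are fixed.

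\textbf{Comparison.} The paper argues by contradiction and compactness. If no uniform $\vep_0$ existed, it extracts $x_n\to x$ and $w_n\to w\in\widetilde\cF_{R+1}(x)$ with holonomy domains shrinking to zero. It then contradicts the fact that the limiting holonomy $\widetilde H_{x,w}$ has a positive domain which, by continuity and transversality, nearby holonomies inherit. For $\iota$, it argues that degenerating images would force $\widetilde H_{x,w}$ to collapse $\cF_{\vep_0}(x)$ to a point. You instead cut a leafwise path of length $\le R$ into at most $M\approx R/\delta$ plaque steps and iterate one uniform modulus of continuity of the local holonomies $M$ times. Your $\iota'$ is that same modulus, since $H^{loc}_{q,p}$ belongs to the same family. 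This yields explicit constants depending only on $\delta$, $R$ and the local product structure, and it shows exactly where the bound $R$ enters, namely in the number of steps. It also makes explicit something the paper's limit argument leaves implicit: why the holonomies $\widetilde H_{x_n,w_n}$ are controlled by a limiting holonomy along a limiting path of bounded length. Your leafwise reading of $\widetilde\cF_R(x)$ is the paper's own convention (Section \ref{s.holonomy}), so your final caveat is unnecessary.

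\textbf{First slip: the image-side recursion runs the wrong way.} With $\vep_i=\iota'(\vep_{i+1})$ the containments do not chain, since that would require $\iota'(\vep_i)\ge\vep_{i+1}$. That backward recursion is really the domain-side argument, the same as your choice of $\vep_0'$. For the image you must go forward. If the image after $i$ steps contains $\cF_{r_i}(p_i)$ with $r_i\le\delta$, then after $i+1$ steps it contains $\cF_{\iota'(r_i)}(p_{i+1})$. So set $r_0=\vep_0$, arrange $\iota'$ nondecreasing with $\iota'(r)\le r$, and take $\iota=(\iota')^{(M)}(\vep_0)$.

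\textbf{Second slip: the composition is not independent of the chain.} Holonomy depends on the homotopy class of the path in the leaf rel endpoints, and $\widetilde\cF$-leaves can have nontrivial holonomy. For example, for $g_1$ the $\cF^{cs}$-leaf through a closed geodesic of period $T$ is a cylinder. The holonomy around its core loop, acting on a local unstable transversal at a point $x$ of the orbit, is $g_T$ restricted to that transversal, an expanding germ fixing $x$. So if $R\ge T$, the trivial chain and the chain going once around both join $x$ to $w=x$ but give different germs, and shrinking the domain does not reconcile them. You do not need independence, because your bounds are uniform over all chains with at most $M$ steps. So drop that sentence, and state and prove the lemma for the holonomy along any leafwise path of length at most $R$. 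That is the only meaningful reading of $\widetilde H_{x,w}$ in any case.
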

		\begin{proof}
			First assume that there is no $\vep_0$ for which $\widetilde H_{x,w}$ is well-defined for all $w\in \widetilde\cF_R(x)$. Taking $\vep_0$ along the sequence $\frac1n$ we obtain sequences $(x_n), (w_n)\subset \bM$ with $w_n\in \widetilde \cF_R(x)$. Taking subsequence if necessary, we have $x_n\to x$ and $w_n\to w\in \widetilde \cF_{R+1}(x)$. Each $ \widetilde H_{x_n,w_n}: \cF(x_n)\to \cF(w_n)$ is only well-defined in $\cF_{\frac1n}(x_n)$. However, there exists $\vep_{x,w}>0$ for which $\widetilde H_{x,w}:\cF_{\vep_{x,w}}(x)\to\cF(w)$ is well-defined. Also keep in mind that $\cF_1(x_n)\to\cF_1(x)$ and $\cF_1(w_n)\to \cF_1(w)$ as $n\to\infty$. By transversality, continuity and the local product structure at both $x$ and $w$, for $n$ sufficiently large one can extend the domain of $\widetilde H_{x_n,w_n}$ to at least $\cF_{\frac{\delta_{x,w}}{2}}(x_n)$, which is a contradiction.
			
			The existence of $\iota>0$ is similar: assume that it does not exist, one obtains a sequence of points $x_n\to x$, $w_n\to w\in \widetilde \cF_{R+1}(x)$ for which $\widetilde H_{x_n,w_n}$ is defined on $\cF_{\vep_0}(x_n)$ but its image has only size $\frac1n.$ This means $\widetilde H_{x,w}$ must collapse $\cF_{\vep_0}(x)$ to the singleton $\{w\}$, which is impossible since $\widetilde H_{x,w}$ is a homeomorphism. The proof of the lemma is complete.
		\end{proof}
		
		Now that Lemma \ref{l.iota} is proven, we obtain that $\tilde I_{w} = \cR(H_{\tilde I_{w_0}, \tilde I_{w}})\supset \cR(\widetilde H_{\tilde w_0,\tilde w})$ which has size at least $\iota$; here $\cR(g)$ denotes the image of the map $g$. Then $N_1$ defined by \eqref{e.N1} satisfies the conclusion of the claim.
		
	\end{proof} 
	With the claim proven, the proof of Theorem \ref{t.holonomy} is complete.	
\end{proof}

\begin{remark}
	In the proof of Lemma \ref{l.iota}, $w_n$ being in $\widetilde \cF_R(x_n)$ for a uniform $R>0$ plays a crucial role; otherwise the limit $w$ may not be even in $\widetilde \cF(x)$.
\end{remark}

\section{Applications: Anosov diffeomorphisms on $\TT^n$ and partially hyperbolic diffeomorphisms on 3-nilmanifolds}\label{s.5}
In this section we provide several examples where the unstable foliation has the homogeneous growth property. As a result, Theorem \ref{t.properties},  \ref{t.1}, and \ref{t.2} all apply. 
\subsection{Anosov diffeomorphisms on $\TT^n$ with one-dimensional unstable bundle}\label{ss.5.1}
We start with a simple example. Let $f: \TT^n\to \TT^n$ be an Anosov diffeomorphism with $\dim E^u = 1$. Also denote by $A_f$ its linear part. Then, there is a topological conjugacy $h:\TT^n\to \TT^n$ with $h\circ f = A \circ h$, and $h$ maps each unstable leaf of $f$ to a linear unstable leaf of $A_f.$ See for instance \cite{Franks} and \cite{Manning}. In this case, it is well-known that $h$ is quasi-isometric: there exist constants $a>1, b>0$ such that for all $x,y$ on the same unstable leaf of $f$, one has
\begin{equation}\label{e.QI}
\frac1a \cdot d^u_A(hx,hy) - b \le d^u_f(x,y) \le a\cdot d^u_A(hx,hy) + b,
\end{equation}
where $d^u_f,d^u_A$ are the unstable leaf metrics of $f$ and $A$ respectively. This immediately leads to 
$$
	\frac1a\cdot |A^n(h(D))|_{\cF^u_A} - b\le |f^n(D)|_{\cF^u_f}\le a\cdot |A^n(h(D))|_{\cF^u_A} + b.
$$

Furthermore, we have that $|A^n(h(D))|_{\cF^u_A} = e^{nh_{top}(A)} |D|_{\cF^u_A} = e^{nh_{top}(f)} |D|_{\cF^u_A}$ since $A$ is linear, and $e^{h_{top}(A)}$ is the unstable eigenvalue of $A$. Therefore we have the following theorem:
\begin{theorem}\label{t.Anosov}
	Let $f: \TT^n\to \TT^n$ be a $C^1$ Anosov diffeomorphism with $\dim E^u = 1$. Then, the unstable foliation $\cF^u_f$ has homogeneous growth with $H_\cF = h_{top}(f)$.
	
	In this case, if $f$ is $C^{1+\alpha}$ and its unique measure of maximal entropy is also a Gibbs $u$-state, then the log Jacobian of $f$ on $\cF^u_f$ is cohomologous to a constant. {In particular, at every periodic point $p$ with period $n$, we have 
	$$
	|D_pf^n(v)| = e^{n h_{top}(f)}|v|
	$$
	for any vector $v\in E^u(p).$
	}
\end{theorem}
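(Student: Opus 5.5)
The plan has three parts: establish homogeneous growth from the quasi-isometry \eqref{e.QI}, feed the Gibbs $u$-state assumption into Theorem \ref{t.2}, and then upgrade the resulting measurable cohomology to pointwise equalities at periodic points.

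\textbf{Homogeneous growth.} Let $D$ be an unstable disk with $|D|_{\cF^u_f}\in[1,K_f)$. We need two-sided bounds on $|h(D)|_{\cF^u_A}$. Since $h$ is uniformly continuous with a uniformly continuous inverse, and $h(D)$ is an arc in a linear unstable line, its length is bounded above uniformly in terms of $K_f$. It is also bounded below uniformly: otherwise $h(D)$ would collapse to a point along a sequence, and by compactness so would $D$. Thus $|h(D)|_{\cF^u_A}\in[c^{-1},c]$.

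Since $A$ is linear, $|A^n(h(D))|=e^{nh_{top}(A)}|h(D)|$, and $h_{top}(A)=h_{top}(f)$ by conjugacy. The displayed inequality then gives
\[
a^{-1}c^{-1}e^{nh_{top}(f)}-b\le |f^n(D)|\le ac\,e^{nh_{top}(f)}+b.
\]
The additive $b$ is harmless for the upper bound. For the lower bound it only matters for $n\le N$, where $N$ is fixed so that $b\le \tfrac12 a^{-1}c^{-1}e^{Nh_{top}(f)}$. For $n\le N$ we use $|f^n(D)|\ge\lambda^n|D|\ge1$ from \eqref{e.exp}, which is comparable to $e^{nh_{top}(f)}$ with a constant depending on $N$. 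This yields $C_G$, and the lemma identifying $H_\cF$ with the entropy shows $H_\cF=h_{top}(f)$. The one point needing care is making the length-to-length comparison for arcs rigorous rather than just distance-to-distance. Since leaves are one-dimensional, the leaf distance between the endpoints of an arc equals its length, so \eqref{e.QI} applies directly.

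\textbf{Cohomology to a constant.} The unique MME $\mu$ is ergodic and is a measure of maximal $u$-entropy. Its partial entropy is at most $h_\mu(f)=h_{top}(f)=h^u_{top}$, and for Anosov systems unstable entropy equals metric entropy. Hence Theorem \ref{t.2} applies and gives a measurable $\phi$ with $\log\lambda_x=\phi\circ f-\phi+h_{top}(f)$ $\mu$-a.e.

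\textbf{Periodic points (main obstacle).} The periodic-point statement needs a continuous, or at least Hölder, transfer function, which a measurable one does not provide directly. I would invoke the measurable Livšic theorem for Hölder cocycles over Anosov diffeomorphisms with respect to the MME, which has full support and local product structure (the Pollicott–Walkden / Livšic result). The potential $\log\lambda_x-h_{top}$ is Hölder because $f$ is $C^{1+\alpha}$ and $E^u$ is Hölder. This upgrades $\phi$ to a Hölder function equal to it $\mu$-a.e., so the equation holds everywhere since $\operatorname{supp}\mu=\TT^n$.

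Summing the equation along the orbit of a periodic point $p$ of period $n$ gives $\log|D_pf^n|_{E^u}|=nh_{top}(f)$. Because $E^u$ is one-dimensional, this is exactly $|D_pf^n(v)|=e^{nh_{top}(f)}|v|$ for every $v\in E^u(p)$. If citing measurable Livšic is undesirable, an alternative is to use Theorem \ref{t.holonomy} with $\widetilde\cF=\cF^s$ together with the Gibbs property to get continuity of the densities. The citation route is the cleaner plan.
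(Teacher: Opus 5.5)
Your proposal is correct and follows the same route as the paper: homogeneous growth comes from the quasi-isometry \eqref{e.QI} together with the exact linear growth $|A^n(h(D))|=e^{nh_{top}(A)}|h(D)|$, and Theorem~\ref{t.2} then gives the cohomological statement. The paper's written proof is devoted almost entirely to justifying \eqref{e.QI}, which you take as given; it does this by lifting to $\RR^n$ and using the quasi-isometry of $\tilde\cF^u_f$ and $\|H-\mathrm{Id}\|<C_0$. You, in turn, supply what the paper leaves implicit: the uniform two-sided bound on $|h(D)|$ (its upper half is most easily read off from \eqref{e.QI}, since uniform continuity of $h$ on $\TT^n$ only controls ambient diameter), the treatment of the additive constant $b$ for small $n$, and the measurable Liv\v{s}ic upgrade that the periodic-point identity genuinely requires.
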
 

\begin{proof}
	Let $\tilde \cF^u_f$ and $\tilde \cF^u_A$
	be the lifting unstable foliation
	of $f$ and $A$ on $\RR^d$, respectively. Let $\tilde x$, $\tilde y$ be lifts of $x$, $y$ where $\tilde y \in \tilde\cF^u_f(\tilde x)$, and
	$H : \RR^d \to \RR^d$ be the lift of conjugacy $h$. Then we have
	$$
	d^u_A(hx, hy) = d_{\tilde\cF^u_A}(H\tilde x,H\tilde y) = d_{\RR^d}(H\tilde x,H\tilde y) \mbox{ and } d^u_f(x, y) = d_{\tilde \cF^u_f}(\tilde x, \tilde y).
	$$
	Since the foliation $\tilde \cF^u_f$ is quasi-isometric on $\RR^d$ (\cite[Lemma 2.2]{GS}), there exists $a > 1, b_1 > 0$ such that
	$$
	\frac1a\cdot d_{\RR^d}(\tilde x,\tilde y)-b_1\le d_{\cF^u_f}(\tilde x,\tilde y)\le a\cdot d_{\RR^d}(\tilde x,\tilde y)+b_1.
	$$
	Moreover, we have $\|H - Id_{\RR^d}\| < C_0$, which implies
	$$
		\frac1a\cdot d_{\RR^d}(H\tilde x,H\tilde y)-\left(\frac{2C_0}{a}+b_1\right)\le d_{\cF^u_f(\tilde x,\tilde y)}\le a\cdot d_{\RR^d}(H\tilde x,H\tilde y)+ (2aC_0+b_1).
	$$
	By taking $b = 2aC_0 + b_1$, we finish the proof.
\end{proof}

\subsection{Partially hyperbolic diffeomorphisms on 3-nilmanifolds} Now
we move on to partially hyperbolic diffeomorphisms on 3-nilmanifolds. We
consider two cases:

\subsubsection{(Partially) hyperbolic diffeomorphisms on $\TT^3$} Ma\~n\'e \cite{Ma} constructed one of the first robustly transitive, yet not uniformly hyperbolic diffeomorphism on $\TT^3$. It is obtained via a $C^0$ small perturbation at a fixed point of an Anosov diffeomorphism on $\TT^3$. His construction was later generalized to derived from Anosov (DA) diffeomorphisms: they are partially hyperbolic diffeomorphisms on $\TT^3$ that are isotopic to linear Anosov automorphisms. See for instance \cite{FPS} for a more detailed discussions.

We let $A$ be a linear Anosov automorphism on $\TT^3$ with eigenvalues $0 < \kappa_1<\kappa_2< 1 < \kappa_3$, and $f$ be a partially hyperbolic diffeomorphism in the isotopy class of $A$. In this case, $f$ has dominated splitting $E^{cs}\oplus E^u$ with $\dim E^{cs} = 2$, and $E^u$ integrates into a one-dimensional foliation $\cF^u$. 

It has been proven by Franks \cite{Franks} that there is a semi-conjugacy $h:\TT^3\to\TT^3$ with $h\circ f = A\circ h$. Note that $h$ maps each leaf of $\cF^u$ homeomorphically to a leaf of $\cF^u_A$ (see \cite[Corollary 7.7]{Potrie}). Moreover, the following has been proven in 
\cite{HP}.
\begin{proposition}\label{p.1}
	The unstable foliation $\cF^u$ of $f$ is quasi-isometric: there exist constants $a,b>0$ such that for any points $x,y$ on the same leaf of $\cF^u$, we have 
	$$
	d_{\tilde \cF^u}(\tilde x,\tilde y)\le a\cdot d_{\RR^3}(\tilde x,\tilde y) + b
	$$
	where $\tilde x, \tilde y$ are lifts of $x,y$ to the universal covering $\RR^3$, and $\tilde \cF^u$ is the lift of $\cF^u$.
\end{proposition}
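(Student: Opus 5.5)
The plan is to use the semiconjugacy $h$ (lifted to $H:\RR^3\to\RR^3$) as a ruler along unstable leaves. The lift satisfies $\|H-\mathrm{Id}_{\RR^3}\|<C_0$ and maps each leaf of $\tilde\cF^u$ homeomorphically onto an affine line parallel to $E^u_A$ (\cite[Corollary 7.7]{Potrie}). Leaf length will be bounded by the length of the $H$-image, and the $H$-image length is at most $d_{\RR^3}(\tilde x,\tilde y)+2C_0$. So the whole proof reduces to one uniform estimate:
$$
\exists\, c>0:\quad \text{every leaf segment } J\subset\tilde\cF^u \text{ with } |H(J)|\le 1 \text{ satisfies } |J|_{\tilde\cF^u}\le c .
$$

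Given this estimate, take $\tilde x,\tilde y$ on the same leaf and let $J$ be the leaf segment between them. Since $H$ is injective on the leaf, $H(J)$ is the straight segment from $H\tilde x$ to $H\tilde y$, whose length is $T\le d_{\RR^3}(\tilde x,\tilde y)+2C_0$. Subdivide $H(J)$ into $\lceil T\rceil$ consecutive subsegments of length at most $1$. Their preimages under $H|_{\tilde\cF^u(\tilde x)}$ subdivide $J$ into leaf segments, each of length at most $c$. Hence
$$
d_{\tilde\cF^u}(\tilde x,\tilde y)\le c\lceil T\rceil\le c\,d_{\RR^3}(\tilde x,\tilde y)+c(2C_0+1),
$$
which gives the proposition with $a=c$ and $b=c(2C_0+1)$.

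The main obstacle is the uniform bound $c$. I would prove it by contradiction using compactness. By $\ZZ^3$-equivariance of $H$ and $\tilde\cF^u$, we may assume the starting points $\tilde x_n$ of bad segments $J_n$ lie in a fundamental domain. These segments satisfy $|H(J_n)|\le1$ and $|J_n|\to\infty$. Pass to a subsequence with $\tilde x_n\to\tilde x$.

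Fix $R>0$ and let $J_n^R\subset J_n$ be the initial subsegment of length $R$ from $\tilde x_n$; it exists for $n$ large. By continuity of the foliation with $C^1$ leaves (uniformly on compact sets), the segments $J_n^R$ converge to the leaf segment $J^R\subset\tilde\cF^u(\tilde x)$ of length $R$ starting at $\tilde x$. Which side of $\tilde x$ the limit lies on is fixed by passing to a further subsequence, using a local orientation. By continuity of $H$, the set $H(J^R)$ lies in the limit of the sets $H(J_n^R)$. Each $H(J_n^R)\subset H(J_n)$ is contained in a ball of radius $1$ about $H\tilde x_n$, so $H(J^R)$ lies in the closed ball of radius $1$ about $H\tilde x$.

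Since $R$ is arbitrary, an entire leaf ray from $\tilde x$ is mapped by $H$ into this bounded set. This contradicts the fact that $H|_{\tilde\cF^u(\tilde x)}$ is a homeomorphism onto a line: such a map is proper, so it sends a leaf ray onto an unbounded ray of the line. The delicate point is making the convergence $J_n^R\to J^R$ precise, namely uniform convergence of leaf segments of fixed length in foliation charts. For this I would cover the length-$R$ segment by finitely many charts and propagate chart by chart.
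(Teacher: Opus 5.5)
Your proof is correct, and it takes a genuinely different route from the paper. The paper offers no argument for this proposition; it simply imports the statement from \cite{HP}. You instead derive it from two facts the paper quotes anyway. The first is that the lift $H$ of Franks' semiconjugacy is $\mathbb{Z}^3$-equivariant and within bounded distance $C_0$ of the identity. The second is that $H$ maps each leaf of $\tilde\cF^u$ homeomorphically onto an affine line parallel to $E^u_A$ (\cite[Corollary 7.7]{Potrie}).

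Your compactness step is sound. You translate the bad arcs into a fundamental domain and pass to the limit, using continuity of the $C^1$ unstable plaques and a fixed local orientation so that the limiting arcs are nested. This yields a leaf ray with bounded $H$-image, contradicting the fact that $H$ is a \emph{surjective} homeomorphism of the limit leaf onto a line. You correctly use surjectivity (properness); mere injectivity would not suffice.

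A side benefit is that your subdivision step gives $d_{\tilde\cF^u}(\tilde x,\tilde y)\le c\,(|H\tilde x-H\tilde y|+1)$. This is the upper half of \eqref{e.QI} directly, which is exactly what Theorem~\ref{t.DA} uses.

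The trade-off is that the heavy lifting moves onto the leafwise injectivity and surjectivity of $H$. In the literature such statements are usually obtained from the global product structure and the quasi-isometric behaviour of the foliations themselves; compare the paper's own remark on circularity in the nilmanifold case. So, relative to the literature, your argument shows that \cite[Corollary 7.7]{Potrie} together with the bounded distance of $H$ from the identity implies the proposition, rather than giving an independent proof. Citing \cite{HP} avoids that dependence. Your constant $c$ is also non-explicit, which is harmless here.
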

Combining this with the fact that the lift of Franks' semi-conjugacy $\tilde h$ is at a uniformly bounded distance to the identity map of $\RR^3$ (see for instance \cite[Section 10]{HP18}) and following the same proof as the Anosov case in Section \ref{ss.5.1}, we obtain:
\begin{theorem}\label{t.DA}
	$\cF^u$ has homogeneous growth, with $H_\cF = h_{top}(A) = \log \kappa_3$.
	
	In addition, if $f$ is $C^{1+\alpha}$ and the unique MME of $f$ is also a Gibbs $u$-state, then the log Jacobian of $f$ on $\cF^u$ is cohomologous to a constant.  In particular, at every periodic point $p$ with period $n$, we have 
		$$
		|D_pf^n(v)| = \kappa_3^n\cdot|v|
		$$
		for any vector $v\in E^u(p).$
	
\end{theorem}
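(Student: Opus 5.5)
We follow the scheme of Theorem~\ref{t.Anosov}, replacing the conjugacy by Franks' semi-conjugacy. Let $\tilde f,\tilde h:\RR^3\to\RR^3$ be lifts with $\tilde h\circ\tilde f=A\circ\tilde h$ and $\|\tilde h-Id_{\RR^3}\|<C_0$. By \cite[Corollary 7.7]{Potrie}, $\tilde h$ maps each leaf of $\tilde\cF^u$ homeomorphically onto a leaf of the linear unstable foliation $\tilde\cF^u_A$, which is a straight line in the $\kappa_3$-eigendirection. Fix an $\cF^u$-disk $D$ with $|D|_{\cF}\in[1,K_f)$, lift it to $\tilde D$ with endpoints $\tilde x,\tilde y$, and write $\tilde x_n=\tilde f^n\tilde x$, $\tilde y_n=\tilde f^n\tilde y$. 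Then $|f^n(D)|_\cF=d_{\tilde\cF^u}(\tilde x_n,\tilde y_n)$, and by linearity
\[
d_{\RR^3}(\tilde h\tilde x_n,\tilde h\tilde y_n)=\kappa_3^n\, d_{\RR^3}(\tilde h\tilde x,\tilde h\tilde y).
\]

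\textbf{Upper bound.} Proposition~\ref{p.1} and the bound on $\tilde h$ give
\[
|f^n(D)|_\cF\le a\,d_{\RR^3}(\tilde x_n,\tilde y_n)+b\le a\kappa_3^n d_{\RR^3}(\tilde h\tilde x,\tilde h\tilde y)+2aC_0+b.
\]
Moreover $d_{\RR^3}(\tilde h\tilde x,\tilde h\tilde y)\le |D|+2C_0\le K_f+2C_0$. Hence $|f^n(D)|_\cF\le C e^{n\log\kappa_3}$.

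\textbf{Lower bound.} Leaf length dominates Euclidean distance, so
\[
|f^n(D)|_\cF\ge d_{\RR^3}(\tilde x_n,\tilde y_n)\ge \kappa_3^n d_{\RR^3}(\tilde h\tilde x,\tilde h\tilde y)-2C_0.
\]
This is the main obstacle: we need $d_{\RR^3}(\tilde h\tilde x,\tilde h\tilde y)\ge c>0$ uniformly over all disks of length in $[1,K_f)$. Since $\tilde h$ is only a semi-conjugacy, it is not injective globally, but it is injective on each unstable leaf. Consider the space of pairs $(\tilde x,\tilde y)$ with $d_{\tilde\cF^u}(\tilde x,\tilde y)\in[1,K_f]$, taken modulo $\mathbb Z^3$. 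This space is compact by continuity of the foliation. The function $(\tilde x,\tilde y)\mapsto d(\tilde h\tilde x,\tilde h\tilde y)$ is continuous and $\mathbb Z^3$-periodic, because $\tilde h-Id$ is periodic. It is positive on this space by leafwise injectivity, so it attains a minimum $c>0$.

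Having $c$ is still not quite enough, since the subtraction of $2C_0$ can swamp $c\kappa_3^n$ for small $n$. To handle this, let $n_1$ be such that $c\kappa_3^{n_1}\ge 4C_0$. For $n\ge n_1$ the bound above gives $|f^n(D)|_\cF\ge \tfrac c2\kappa_3^n$. For $n<n_1$ we use $|f^n(D)|_\cF\ge\lambda^n\ge1\ge\kappa_3^{n-n_1}$. Combining the two ranges yields a uniform $C_G$, so $\cF^u$ has homogeneous growth with $H_\cF=\log\kappa_3=h_{top}(A)$.

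By the lemma following Definition~\ref{d.HG}, $H_\cF$ then equals the unstable topological entropy. For the second paragraph of the theorem, Theorem~\ref{t.2} applied to the unique MME $\mu$ gives a measurable $\phi$ with
\[
\log|Df|_{E^u}|=\phi\circ f-\phi+\log\kappa_3 \quad \mu\text{-a.e.}
\]
To pass to periodic points, a Livšic-type argument is needed. The plan is to use that $\mu$ is Gibbs along $\cF^u$, so the densities give a continuous version of $\phi$ on unstable leaves. Then use the Gibbs/$u$-state structure with full support of $\mu$ to approximate periodic orbits by $\mu$-typical orbits, and conclude $|D_pf^n|_{E^u}|=\kappa_3^n$. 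I expect this periodic-point deduction to require additional regularity input beyond what has been stated so far.
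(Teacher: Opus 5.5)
Your homogeneous-growth argument is correct and follows the paper's route. It uses Proposition~\ref{p.1}, the bound $\|\tilde h-Id\|<C_0$, and the fact that $A$ stretches linear unstable lines by exactly $\kappa_3$, as in Section~\ref{ss.5.1}. Two of your steps fill in details the paper passes over: the compactness argument giving a uniform lower bound on $d_{\RR^3}(\tilde h\tilde x,\tilde h\tilde y)$ from leafwise injectivity of $\tilde h$, and the separate treatment of small $n$.

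The genuine gap is in the second part. Theorem~\ref{t.2} applies only to an ergodic measure of maximal $\cF^u$-entropy. You apply it to the unique MME $\mu$ without checking that $h^u_\mu(f)=H_\cF=\log\kappa_3$, and this is not automatic:
\begin{itemize}
\item In general one only has $h^u_\mu(f)\le h_\mu(f)$, with strict inequality possible when $\mu$ has a positive exponent inside $E^{cs}$. The paper's nilmanifold discussion exhibits such a case: $\mu^+$ is an MME but not a measure of maximal $\cF^u$-entropy.
\item The Gibbs $u$-state hypothesis does not help. It only identifies $h^u_\mu(f)$ with $\int\log\|Df|_{E^u}\|\,d\mu$, which is a priori just $\le h^u_{top}(f)$.
\end{itemize}
The paper supplies exactly this step through \cite[Proposition 6.1]{UVYY1}: the unique MME of $f$ is also the unique measure of maximal $\cF^u$-entropy. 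Roughly, because $h$ is injective on unstable leaves, the unstable entropy of $\mu$ can be read off on the factor $A$, where $h_*\mu$ is the Haar measure. You need this input, or a substitute, before invoking Theorem~\ref{t.2}.

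The periodic-point identity is also left open in your proposal. The paper does not spell it out either, but no Liv\v{s}ic-type regularity is needed. One route:
\begin{itemize}
\item For $\mu$-typical $x$, bounded distortion ($f\in C^{1+\alpha}$, plaques of length in $[a,1]$) makes the Gibbs $u$-state densities on $\cF_\cA(x)$ uniformly bounded above and below.
\item Theorem~\ref{t.1} and Proposition~\ref{p.ac4} then give $|I|_\cF\asymp e^{-n_0(I)H_\cF}$, with a uniform constant, for $u$-disks $I$ in typical plaques. By approximation the same holds for small $u$-disks through any point of $\supp\mu$.
\item Apply this to $I=f^{-kn}(D)$ for a small $u$-disk $D\ni p$. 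Distortion gives $|I|_\cF\asymp\|D_pf^{kn}|_{E^u(p)}\|^{-1}|D|_\cF$, and $n_0(I)=kn+n_0(D)$.
\item Letting $k\to\infty$ forces $\|D_pf^{n}|_{E^u(p)}\|=\kappa_3^n$.
\end{itemize}
This still requires $p\in\supp\mu$, which has to be supplied separately, for instance by full support of $\mu$.
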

Here the second statement is due to the fact that the unique MME of $f$ is also the unique measure of maximal $\cF^u$-entropy. For more details, see \cite[Proposition 6.1]{UVYY1}.

\subsubsection{Partially hyperbolic diffeomorphisms on 3-nilmanifolds other than $\TT^3$}

Next we consider any partially hyperbolic diffeomorphism $f$ on a 3-nilmanifold $\bM$ other than $\TT^3$. These manifolds are quotients of the 3-Heisenberg group, and can be viewed as non-trivial circle bundles over $\TT^2$. These classes of diffeomorphisms have been studied in \cite{UVY} and \cite{RHRHTU}.

Denote by $E^s\oplus E^c\oplus E^u$ the dominated splitting of $f$ into one-dimensional bundles, and let $\cF^u$ be the one-dimensional unstable foliation. It has been shown in \cite[Section 8]{UVYY1} (see also \cite[Corollary 1.2]{H13}) that there exists a continuous surjective map $h: \bM\to\TT^2$ with the following properties:
\begin{enumerate}
	\item $ h\circ f = A \circ h$ where $A$ is some Anosov automorphisms of $\TT^2$; in other words, $h$ is a semi-conjugacy.
	\item $h$ maps each unstable leaf of $f$ homeomorphically to a linear unstable leaf of $A$.	
\end{enumerate}

We claim:
\begin{proposition}
	$h$ is a quasi-isometry on each unstable leaf of $h$ in the sense of \eqref{e.QI}.
\end{proposition}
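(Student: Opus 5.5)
Following the approach of the $\TT^3$ case, I will work on the universal cover and compare each unstable leaf with its image under the semi-conjugacy through a common ambient metric. Let $\pi:\widetilde\bM\to\bM$ be the universal cover, where $\widetilde\bM$ is the Heisenberg group $\mathcal H$ with a left-invariant metric. Let $\tilde h:\mathcal H\to\RR^2$ be a lift of $h$ that is equivariant with respect to the abelianization $\mathcal H\to\RR^2$, i.e.\ the quotient by the center. Write $p:\mathcal H\to\RR^2$ for this projection, which is a Lie group homomorphism; with suitable metrics it is $1$-Lipschitz. The first step is to show that $\tilde h$ is at bounded distance from $p$. This should follow from the construction in \cite[Section 8]{UVYY1}, where $h$ is obtained as a Franks-type semi-conjugacy for the induced Anosov map $A$ on the base $\TT^2$. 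Concretely, $\tilde h$ is the unique map within bounded distance of $p$ satisfying $\tilde h\circ\tilde f=A\circ\tilde h$, obtained as the fixed point of a contraction on bounded perturbations of $p$. So there is $C_0$ with $\|\tilde h-p\|<C_0$.

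Next I establish an upper comparison: for $\tilde y\in\tilde\cF^u(\tilde x)$, $d^u_f(x,y)\le a\,d^u_A(hx,hy)+b$. Since $\tilde h$ maps unstable leaves homeomorphically onto linear unstable lines of $A$, $d^u_A(hx,hy)=|\tilde h\tilde x-\tilde h\tilde y|$. This is the key geometric input and is where the non-toral structure enters. The leaf $\tilde\cF^u(\tilde x)$ projects under $p$ to a curve staying within $C_0$ of the line $E^u_A$ through $p(\tilde x)$. I would like $p$ restricted to the leaf to be uniformly quasi-isometric onto its image. Here I would invoke the quasi-isometry of $\tilde\cF^u$ in $\mathcal H$, proved for these systems (e.g.\ in \cite{H13}, or via the branching-foliation arguments of \cite{HP} adapted to nilmanifolds), together with the fact that the unstable direction is uniformly transverse to the center fibers. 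The plan to get this is to argue by contradiction with compactness. Suppose a long unstable arc has endpoints with nearby $\tilde h$-images. Then, by uniqueness of $\tilde h$ on leaves (injectivity on leaves), after renormalizing by $f^{-n}$ one finds unstable arcs of length $\ge 1$ whose $\tilde h$-images are arbitrarily short. Passing to a limit gives a nontrivial unstable arc collapsed by $h$, contradicting that $h$ is a homeomorphism on leaves. Since $A$ contracts linearly by a fixed factor under $f^{-1}$, this yields $|\tilde h\tilde x-\tilde h\tilde y|\ge c\,d^u_f(x,y)-b'$ for some constants $c,b'$.

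The lower comparison, $d^u_A(hx,hy)\le a\,d^u_f(x,y)+b$, is easier. We have $|\tilde h\tilde x-\tilde h\tilde y|\le |p\tilde x-p\tilde y|+2C_0\le d_{\mathcal H}(\tilde x,\tilde y)+2C_0\le d^u_f(x,y)+2C_0$. Combining the two comparisons gives \eqref{e.QI}.

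The main obstacle is the upper bound, i.e.\ the uniform injectivity estimate. The compactness argument needs the renormalization to be uniform, so I would first prove a local version: there is $\eta>0$ such that every unstable arc of length $1$ has $\tilde h$-image of length at least $\eta$. This follows by compactness of $\bM$ and continuity of $h$ on the space of unit unstable arcs, using that each such image is nondegenerate. Then, for an arc of length $\ell$, iterate backward until its length is about $1$. Lengths shrink under $f^{-1}$ by at least $\lambda$ and at most $K_f$, while $A$-images shrink exactly by $\kappa_u$ per step. Comparing the backward times needed for both then gives the linear lower bound, up to additive constants. Subadditivity along subdivision of the arc into unit pieces would be an alternative route if the time comparison turns out not to be clean.
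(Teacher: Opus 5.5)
\textbf{Gap.} Your main argument for the hard inequality $d^u_f(x,y)\le a\,d^u_A(hx,hy)+b$ is backward renormalization followed by ``comparing the backward times''. This cannot give a linear bound. Take an unstable arc $\gamma$ of length $\ell$ and let $n$ be the first time with $|f^{-n}\gamma|_u\le 1$. A priori you only know $\log\ell/\log K_f\lesssim n\lesssim\log\ell/\log\lambda$. The $\tilde h$-image, by contrast, shrinks by exactly $\kappa_u^{-1}$ per step, and $\kappa_u<K_f$. So all you obtain is
$|\tilde h\gamma|\ge\eta\,\kappa_u^{n}\gtrsim\eta\,\ell^{\log\kappa_u/\log K_f}$,
which is a sublinear power law. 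A sharp comparison $n=\log\ell/\log\kappa_u+O(1)$ is essentially the homogeneous growth of $\cF^u$, which this proposition is meant to deliver, so it is not available to you. Your contradiction argument has the same defect. It correctly shows that $|\tilde h\gamma|\to\infty$ uniformly as $|\gamma|_u\to\infty$. But the sentence ``since $A$ contracts linearly by a fixed factor under $f^{-1}$, this yields $|\tilde h\tilde x-\tilde h\tilde y|\ge c\,d^u_f(x,y)-b'$'' does not follow, because $f^{-1}$ and $A^{-1}$ contract the leaf at different rates, and the rate for $f^{-1}$ is non-uniform.

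\textbf{Fix.} The fallback in your last sentence is the right argument, but the property it needs is additivity, not subadditivity. Subadditivity of image lengths holds for any map by the triangle inequality, and it only gives the other (easy) inequality. The point is that $\tilde h$, restricted to a lifted unstable leaf, is continuous and injective into a line, hence strictly monotone. Cut $[\tilde x,\tilde y]$ at consecutive points $\tilde x_0=\tilde x,\tilde x_1,\dots$ at unit leaf distance. Their images are consecutive non-overlapping segments, so
$|\tilde h\tilde x-\tilde h\tilde y|\ge\eta\lfloor d^u_f(x,y)\rfloor$,
where $\eta$ is your compactness constant. For that constant to be well defined you should note that $\tilde h(g\tilde z)-\tilde h(g\tilde w)=\tilde h(\tilde z)-\tilde h(\tilde w)$ for deck transformations $g$, so the quantity descends to the compact space of unit unstable arcs in $\bM$. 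The same subdivision, with $M$ the supremum of image lengths of arcs of length at most $1$, gives
$|\tilde h\tilde x-\tilde h\tilde y|\le M\lceil d^u_f(x,y)\rceil$.
So your Step 1 (bounded distance of $\tilde h$ to $p$) and the appeal to quasi-isometry of $\tilde\cF^u$ in $\mathcal H$ become unnecessary.

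\textbf{Comparison with the paper.} Repaired this way, your proof takes a different route. The paper combines the bounded distance of $\tilde h=\Pi\circ\tilde h_1$ to $\Pi$ with the bi-Lipschitz property of $\Pi$ on unstable leaves of the nil-automorphism. It ultimately relies on the quasi-isometry of $\cF^u_f$ from \cite{H13}, as its own remark concedes. Your version needs only that $h$ is continuous and maps each unstable leaf homeomorphically onto a linear unstable leaf.
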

\begin{proof}
	Denote by $\mathbb H$ the 3-Heisenberg group consisting of matrices of the form $$H(x,y,z):=\begin{pmatrix}
		1 & x & z\\
		0 & 1 & y\\
		0 & 0 & 1
	\end{pmatrix}$$
	and by $\Pi: \mathbb H\to \RR^2$ the group homomorphism $\Pi(H(x,y,z)) = (x,y).$ Then, $\mathbb H$ is the universal covering of $\bM$.
%
%

	The semi-conjugacy $h: \bM\to \TT^2$ lifts naturally to some $\tilde h:\mathbb H \to \RR^2$ in the following way: first find a (center) leaf conjugacy $h_1: \bM\to\bM$ that leaf conjugates $f$ to a nil-automorphism $B$ (the existence is given by \cite{H13}), then lift $h_1$ to some $\tilde h_1:\mathbb H\to\mathbb H$ that center leaf conjugates $\tilde f$ and $\tilde B$. Then, $\tilde h = \Pi \circ \tilde h_1$ semi conjugates $\tilde f$ and $\tilde A:\RR^2\to\RR^2$. Observe that:
	\begin{enumerate}
		\item $\tilde h$ sends each leaf of $\tilde \cF_f^u$ homeomorphically to a linear leaf of $\tilde A.$
		\item $\tilde h$ is at uniformly bounded $C^0$-distance to $\Pi$ because $\tilde h_1$ is at a  bounded distance to the identity map on $\mathbb H$.
		\item  $\Pi$ is bi-Lipschitz with uniformly bounded Lipschitz norm when restricted to each unstable leaf of $\tilde B$, since those leaves have uniform angle with the center leaves of $\tilde B$ which are $A(x,y,tz)$.
	\end{enumerate}  
	Combining these facts, we see that $\tilde h$ is a quasi-isometry, and so is $h$.
\end{proof}

\begin{remark}
	Here the proof is in fact a circular reasoning. In \cite{H13} it was first proven that $\cF^u_f$ is quasi isometric, and therefore $\tilde f$ has a global product structure. The semi-conjugacy $h:\bM\to\TT^2$ was built using this fact. 
\end{remark}

The following theorem follows immediately. 
\begin{theorem}\label{t.5.4}
	Let $f$ be any $C^1$ partially hyperbolic diffeomorphism on a $3$-dimensional nilmanifold other than $\TT^3$. Then $\cF^u_f$ has homogeneous growth, with $H_\cF = h_{top}(A) = h_{top}(f)$.
	
	Furthermore, if $f$ is $C^{1+\alpha}$ and the unique measure of maximal $\cF^u$-entropy of $f$ is also a Gibbs $u$-state, then the log Jacobian of $f$ on $\cF^u$ is cohomologous to a constant via a measurable solution $\phi$. {In particular, at every periodic point $p$ with period $n$, we have 
		$$
		|D_pf^n(v)| = e^{nh_{top}(f)}|v|
		$$
		for any vector $v\in E^u(p).$
	} 
\end{theorem}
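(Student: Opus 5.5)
Theorem \ref{t.5.4} has three parts: homogeneous growth of $\cF^u_f$, cohomology of the log Jacobian to a constant, and the statement at periodic points. We handle them in that order.

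\emph{Homogeneous growth.} We follow the proof of Theorem \ref{t.Anosov}, using the quasi-isometry of $h$ on unstable leaves just established. Let $D$ be an $\cF^u_f$-disk with $|D|_\cF\in[1,K_f)$. The inequalities \eqref{e.QI} give
$$
\tfrac1a|A^n(h(D))|_{\cF^u_A}-b\le |f^n(D)|_{\cF^u_f}\le a|A^n(h(D))|_{\cF^u_A}+b.
$$
Since $A$ is linear, $|A^n(h(D))|_{\cF^u_A}=e^{nh_{top}(A)}|h(D)|_{\cF^u_A}$. Applying \eqref{e.QI} once more, $|h(D)|$ is bounded above by $a(K_f+b)$.

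A lower bound on $|h(D)|$ is also needed. It does not follow directly from \eqref{e.QI} when $b\ge 1/a$, so I would get it from a compactness argument instead. The map $h$ is a homeomorphism on each leaf and is continuous on $\bM$. Consider the length of $h$ applied to unstable disks of length in $[1,K_f]$. This is a positive continuous function on a compact family, since such disks vary continuously with their basepoint and length. It is therefore bounded below by some $c>0$.

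The additive constant $b$ is absorbed because $e^{nh_{top}(A)}c$ is bounded below. For small $n$ we instead use the a priori bounds $\lambda^n\le|f^n(D)|/|D|\le K_f^n$. Together these give $|f^n(D)|_\cF\asymp_{C_G}e^{nh_{top}(A)}$, so $H_\cF=h_{top}(A)$.

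It remains to check $h_{top}(A)=h_{top}(f)$. The preceding lemma gives $H_\cF=h^u_{top}(f)$. For 3-nilmanifolds other than $\TT^3$ the center direction contributes no entropy (the center leaves are the fibers of the circle bundle). I would cite \cite{UVYY1} or \cite{H13} for the equality $h_{top}(f)=h_{top}(A)$.

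\emph{Cohomology.} If $f$ is $C^{1+\alpha}$ and the unique measure of maximal $\cF^u$-entropy $\mu$ is a Gibbs $u$-state, then Theorem \ref{t.2} applies directly. It gives a $\mu$-measurable $\phi$ with $\log|Df|_{E^u}|=\phi\circ f-\phi+h_{top}(f)$, holding $\mu$-almost everywhere.

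\emph{Periodic points.} This is the main obstacle, since measurable cohomology alone says nothing at individual periodic points. I would first try a Livšic-type upgrade. By Theorem \ref{t.1}, the conditionals of $\mu$ are equivalent to the $\Gamma$-measures, which have full support on leaves. Uniqueness then gives that $\mu$ has full support. Moreover, $\log|Df|_{E^u}|$ is Hölder and $\mu$ has the Gibbs/local product structure given by Theorem \ref{t.holonomy}, which applies to the $cs$-foliation since it is non-expanding. A measurable Livšic theorem in the style of Wilkinson or Avila–Kocsard–Liu would then make $\phi$ continuous, and the equality $|D_pf^n|_{E^u}|=e^{nh_{top}(f)}$ would follow by summing the cohomological equation along the orbit of $p$.

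If that upgrade fails, the fallback is to compare measures. Given a periodic orbit with exponent $\chi\neq h_{top}(f)$, take its periodic measure $\mu_p$. Integrating the equation against $\mu_p$ requires $\phi$ to be defined on the orbit, which is exactly where continuity is needed. So the realistic route is the continuity upgrade, and it relies on the accessibility-type results for nilmanifolds in \cite{UVYY1}.
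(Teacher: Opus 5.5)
Your treatment of homogeneous growth and of the cohomological equation follows the paper. The paper says the theorem ``follows immediately'' from the leafwise quasi-isometry of $h$, run through the proof of Theorem~\ref{t.Anosov}, together with Theorem~\ref{t.2} applied to the unique (hence ergodic) measure of maximal $\cF^u$-entropy. Your compactness argument for a uniform lower bound on $|h(D)|$ fills in a step the paper leaves implicit.

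The genuine gap is the periodic-point clause. The paper gives no separate argument for it, and the route you propose does not go through, for three reasons.
\begin{enumerate}
\item The measurable-to-continuous part of Wilkinson's Theorem A concerns volume-preserving $f$ and volume-measurable solutions. That is why the paper invokes it only in the subsequent volume-preserving theorem. Here $\mu$ is a Gibbs $u$-state, $f$ is arbitrary, and Livšic-type results in the Avila--Kocsard--Liu style concern cocycles over hyperbolic bases.
\item Theorem~\ref{t.holonomy} controls $cs$-holonomies of the leafwise reference measures only. It says nothing about the conditionals of $\mu$ transverse to $\cF^u$, which is what you would need to make $\phi$ essentially $s$-invariant. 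So it gives no local product structure for $\mu$.
\item Uniqueness does not give full support. Theorem~\ref{t.1} only says that $\supp\mu$ is $\cF^u$-saturated.
\end{enumerate}

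The missing idea is that you can bypass the regularity of $\phi$ altogether. Put $S_m(y)=\log|D_yf^m|_{E^u}|-mH_\cF$ and $J^u=|Df|_{E^u}|$.

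By Proposition~\ref{p.2}, for $\mu$-a.e.\ $x$ one has $\mu_x(I)\asymp_{L_0}e^{-n_0(I)H_\cF}$ for every $\cF$-disk $I\subset\cF_\cA(x)$. For a Gibbs $u$-state of a $C^{1+\alpha}$ map, the leaf densities are bounded above and below by uniform constants, via the standard formula $\rho(y)/\rho(z)=\prod_{k\ge1}J^u(f^{-k}z)/J^u(f^{-k}y)$. Hence $|I|_\cF\asymp e^{-n_0(I)H_\cF}$ with uniform constants.

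Bounded distortion up to time $n_0(I)$ turns this into $|S_{n_0(I)}(y)|\le C$ for $y\in I$. As $r$ grows, $n_0(\cF_r(y))$ decreases in unit steps. So you get $\sup_{m\ge0}|S_m(y)|\le C$ for every $y$ at distance at least $a/4$ from $\partial\cF_\cA(x)$, with $a$ as in \eqref{e.size}. Thus the closed set $G=\{y:\sup_m|S_m(y)|\le C\}$ has positive $\mu$-measure.

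The identity $S_m(f^ky)=S_{m+k}(y)-S_k(y)$, together with ergodicity, puts $\supp\mu$ inside the closed set $\{\sup_m|S_m|\le 2C\}$. At a periodic point $p\in\supp\mu$ of period $n$, the sequence $S_{kn}(p)=kS_n(p)$ stays bounded, which forces $S_n(p)=0$. Since $\dim E^u=1$, this is the claim.

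This reaches \emph{every} periodic point only once $\supp\mu=\bM$. That requires an input independent of uniqueness, such as minimality of $\cF^u$.
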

Here we remark that the uniqueness of the measure of maximal $\cF^u$-entropy is proven in \cite{UVYY1}. This measure is also a measure of maximal entropy, but the latter may not be unique. 

{
Indeed it was proven in \cite{RHRHTU} that a dichotomy holds: either there exists a unique measure of maximal entropy with zero center exponent, or there exists two ergodic measures of maximal entropy, denoted by $\mu^\pm$. The center exponent of $\mu^-$ (resp. $\mu^+$) is negative (resp. positive), and it was shown in \cite{UVYY1} that it is the unique measure of maximal $\cF^u$-entropy (resp. the unique measure of maximal $\cF^s$-entropy by considering $f^{-1}$). 
} 

Using \cite[Theorem A]{Wilkinson}, we can improve the measurable solution $\phi$ to a continuous solution provided that $f$ is volume preserving.
%

\begin{theorem}
	Assume that $f$ is a $C^{1+\alpha}$ volume preserving partially hyperbolic
	diffeomorphism on a 3-dimensional nilmanifold other than $\TT^3$. If the
	measure of maximal entropy of $f$ coincide with Lebesgue volume
	$$\mu_{MME} = m,$$
	then
	\begin{itemize}
		\item  if $\lambda^c(m) \ge 0$, then there exists a continuous function $\tilde\phi_s$ satisfying
		$$
		\log |D_xf|_{E^s(x)}| = \tilde\phi_s\circ f(x) - \tilde\phi_s + h_{top}(f)$$
		\item if $\lambda^c(m) \le  0$, then there exists a continuous function $\tilde\phi_u$ satisfying
		$$
		\log |D_xf|_{E^u(x)}| = \tilde\phi_u\circ f(x) - \tilde\phi_u + h_{top}(f)
		$$
		\item if $\lambda^c(m) = 0$, then $f$ is smoothly conjugate to a rotation extension
		of the Anosov automorphism $A$ on $\TT^2$.	
	\end{itemize}
\end{theorem}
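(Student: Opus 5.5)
The plan is to feed Theorem \ref{t.2} into the rigidity result \cite[Theorem A]{Wilkinson}, one case of the center exponent at a time. Since $m$ is the MME and $f$ is volume preserving and $C^{1+\alpha}$, $m$ has absolutely continuous conditionals along both $\cF^u$ and $\cF^s$ (Pesin theory or absolute continuity of the strong foliations), so $m$ is a Gibbs $u$-state and a Gibbs $s$-state.

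\textbf{Case $\lambda^c(m)\le 0$.} First I identify $m$ as the unique measure of maximal $\cF^u$-entropy. By the dichotomy of \cite{RHRHTU}, either the MME is unique with zero center exponent, or there are two ergodic MMEs $\mu^\pm$. Since $m$ is the MME, we are in the first alternative, or $m=\mu^-$ in the second. By \cite{UVYY1}, in either situation $m$ is the unique measure of maximal $\cF^u$-entropy, and it is ergodic. Theorem \ref{t.5.4} then applies, because $\cF^u$ has homogeneous growth with $H_\cF=h_{top}(f)$. It gives a measurable $\phi_u$ with
$$\log|D_xf|_{E^u(x)}|=\phi_u\circ f-\phi_u+h_{top}(f) \quad m\text{-a.e.}$$
The map $x\mapsto \log|D_xf|_{E^u}|$ is Hölder, and $f$ is a volume preserving, accessible partially hyperbolic diffeomorphism. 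Accessibility holds for nilmanifolds other than $\TT^3$, since the strong bundles are not jointly integrable there. Hence \cite[Theorem A]{Wilkinson} upgrades $\phi_u$ to a continuous (indeed Hölder) $\tilde\phi_u$ agreeing a.e., and the equation then holds everywhere by continuity.

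\textbf{Case $\lambda^c(m)\ge 0$.} Apply the same argument to $f^{-1}$, whose unstable foliation is $\cF^s$. Theorem \ref{t.5.4} for $f^{-1}$ gives a measurable solution for $\log|Df^{-1}|_{E^s}|$. Rewriting $\log|D_{f(x)}f^{-1}|_{E^s}|=-\log|D_xf|_{E^s(x)}|$ yields the stated equation for $E^s$, after a sign change of the transfer function. The identity $h_{top}(f^{-1})=h_{top}(f)$ supplies the constant.

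\textbf{Case $\lambda^c(m)=0$.} Both previous cases apply, so the stable and unstable Jacobians are each continuously cohomologous to the constants $-h_{top}$ and $h_{top}$. Volume preservation gives $\log|Df|_{E^s}|+\log|Df|_{E^c}|+\log|Df|_{E^u}|$ coboundary to $0$. Therefore $\log|Df|_{E^c}|$ is a continuous coboundary. I would then conjugate via these transfer functions to a metric in which $Df$ is isometric on $E^c$ and conformal on $E^u$ and $E^s$. Using the leaf conjugacy of \cite{H13} to the nil-automorphism, I would then argue, as in the classification of rotation extensions (smoothness of $\cF^c$ as a circle fibration, and periodic data matching $A$ by Theorem \ref{t.5.4}), that $f$ is smoothly conjugate to a rotation extension of $A$.

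The main obstacle is this last case, in particular getting smoothness rather than mere continuity. I expect to need bootstrapping of the transfer functions' regularity (Livšic-type regularity for partially hyperbolic systems) and citing a known rigidity result. The middle steps are largely bookkeeping once the identification $m=\mu^-$ or the unique MME is secured.
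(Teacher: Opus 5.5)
Your treatment of the first two bullets follows the paper's argument. For $\lambda^c(m)\le 0$ you identify $m$ as the unique (ergodic) measure of maximal $\cF^u$-entropy, apply Theorem~\ref{t.5.4} to get a measurable solution, and upgrade it with \cite[Theorem A]{Wilkinson}; for $\lambda^c(m)\ge 0$ you pass to $f^{-1}$. The paper's only extra remark, that $\phi$ is merely $\mu$-measurable, is harmless, since such a function agrees a.e.\ with a Borel one.

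Two small corrections.
\begin{itemize}
\item In the stable case the constant must be $-h_{top}(f)$. The displayed $+h_{top}(f)$ is a typo: integrating against $m$ would give $\lambda^s(m)=h_{top}(f)>0$. Your computation actually yields $\log|D_xf|_{E^s(x)}|=\psi\circ f(x)-\psi(x)-h_{top}(f)$ with the \emph{same} transfer function $\psi$ obtained for $f^{-1}$. It is the constant, not $\psi$, that changes sign.
\item Accessibility of conservative partially hyperbolic diffeomorphisms of non-toral $3$-nilmanifolds is a theorem of Rodriguez Hertz--Rodriguez Hertz--Ures: invariant $su$-laminations cannot exist there. Your stated reason does not suffice, because failure of joint integrability of $E^s\oplus E^u$ at some point only produces an open accessibility class.
\end{itemize}

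The genuine gap is the third bullet, which you flag yourself, and your suggested fix would not close it. From the two continuous cohomologies and volume preservation you correctly get that $\log|Df|_{E^c}|$ is a continuous coboundary, i.e.\ $E^c$ carries a continuous $f$-invariant metric. With the leaf conjugacy of \cite{H13}, or directly from the zero-exponent alternative of \cite{RHRHTU}, this gives at most a \emph{topological} conjugacy to a rotation extension.

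A smooth conjugacy to a rotation extension would carry the smooth circle fibration of the model onto an $f$-invariant foliation tangent to $E^c$. So smoothness of the center foliation is part of what must be proved. Liv\v{s}ic-type regularity of transfer functions controls the Jacobian cocycles, not the regularity of $\cF^c$. Moreover, with $f$ only $C^{1+\alpha}$ there is nothing to bootstrap. Thus ``argue as in the classification of rotation extensions'' is the entire content of this bullet, and it is missing.

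The paper writes no argument for this bullet either; its text only supplies the Theorem~\ref{t.5.4} plus \cite[Theorem A]{Wilkinson} upgrade. A route in the paper's spirit is the one used in Section~\ref{s.geodesic}:
\begin{itemize}
\item When $\lambda^c(m)=0$, $m$ is a Gibbs state and a measure of maximal $\cF^u$-entropy for both $f$ and $f^{-1}$.
\item Since center leaves are compact circles of bounded length, $f$ is not expanding along $\cF^{cs}$ and $f^{-1}$ is not expanding along $\cF^{cu}$.
\item Theorem~\ref{t.holonomy} then gives absolutely continuous $cs$- and $cu$-holonomies and, as in that section, an absolutely continuous $\cF^c$.
\end{itemize}
Even then, you still need an Avila--Viana--Wilkinson type rigidity theorem for volume-preserving systems with compact center leaves. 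The paper invokes such results (\cite{AVW} in Section~\ref{s.geodesic}) only for $C^\infty$ maps, whereas this statement assumes only $C^{1+\alpha}$.
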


\begin{remark}
	Even though the solution $\phi$ given by Theorem \ref{t.5.4} is only $\mu$-measurable rather than Borel measurable, one can directly check that \cite[Proposition 4.7]{Wilkinson} still applies, and therefore \cite[Theorem A]{Wilkinson} is applicable.
\end{remark}

\section{Applications: perturbations of time-one maps of geodesic flows}\label{s.geodesic}
Let $S$ be a compact surface with negative curvature. Write $\bM = T^1S$ for the unit tangent bundle of $S$, and $(g_t)_{t\in\RR}$ for the geodesic flow on $\bM$. It is well-known that $(g_t)_{t\in\RR}$ has a hyperbolic splitting $E^s\oplus \langle X\rangle \oplus E^u$. $g_1$, the time-one map, is naturally a partially hyperbolic diffeomorphism on $\bM,$ and we will consider its perturbation in the space of $C^1$ diffeomorphisms on $\bM$. Let $\cU$ be a small $C^1$ neighborhood of $g_1$, then every $f\in \cU$ is also partially hyperbolic. Furthermore, the unstable bundle of $f$ integrates into a foliation which we denote by $\cF^u(f)$.

The next proposition gathers some useful properties concerning diffeomorphisms in $\cU.$

\begin{proposition}\label{p.p}
	Let $g_1: \bM\to\bM$ be the time-one map of the geodesic flow on a compact surface of negative curvature, and $\cU$ be a $C^1$ open neighborhood of $g_1$ in $\mbox{Diff\ }^1(\bM)$. Then, for $\cU$ sufficiently small, the following hold for every $f\in \cU:$
	\begin{enumerate}
		\item (\cite[Theorem 7.1]{HPS}) $f$ is dynamically coherent, and is center leaf conjugated to $g_1$ via a homeomorphism $h_f$ that is $C^0$ close to the identity.
		\item (\cite[Lemma 2.5]{SY}) $f$ is not expanding on the center leaves in the sense that there exist $K_1,K_2>0$ such that for any $c\in\NN$, if $y\in \cF^c(x,f)$ with $d^c(x,y)<c K_1$ then 
		$$
		d^c(f^n(x),f^n(y))\le c K_2, \forall n\in\NN.
		$$ 
		Here $d^c$ is the distance on center leaves. 
		\item (\cite[Proposition 2.7]{SY}) The $c$-holonomy is globally defined within $cu$-leaves: if $y\in \cF^{c}(x,f)$ then the holonomy $h^{c}_{x,y}: \cF^{cu}(x,f)\to \cF^{cu}(y,f)$ is well-defined and continuous. 
	\end{enumerate}
\end{proposition}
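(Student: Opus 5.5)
The proposition collects three known facts, so the plan is to verify each item from the cited references, using only normal hyperbolicity and the one-dimensionality of the center. Throughout, shrink $\cU$ so that every $f\in\cU$ is partially hyperbolic with splitting $E^s\oplus E^c\oplus E^u$ close to that of $g_1$, with one-dimensional center.

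\textbf{Item (1).} The center foliation of $g_1$ is the orbit foliation of the geodesic flow. This foliation is $C^1$ and normally hyperbolic for $g_1$, hence plaque expansive. The stability theorem \cite[Theorem 7.1]{HPS} then gives, for $f$ $C^1$-close to $g_1$, an $f$-invariant center foliation $\cF^c(f)$ and a homeomorphism $h_f$, $C^0$-close to the identity, sending $f$-center leaves to $g_1$-orbits and conjugating the leaf dynamics. The center-stable and center-unstable foliations persist the same way, which gives dynamical coherence.

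\textbf{Item (2), non-expansion along the center.} I would argue via the ordering of one-dimensional center leaves rather than through any Lyapunov-stability estimate.
\begin{enumerate}
\item Since $f$ is $C^1$-close to $g_1$ and $h_f$ is $C^0$-close to the identity, $f$ moves each point along its center leaf, in the flow direction, by a center distance in $[1-\epsilon,1+\epsilon]$. In particular $f$ preserves the orientation of center leaves.
\item Let $K_1<1-\epsilon$ and take $y\in\cF^c(x,f)$ with $d^c(x,y)<K_1$. Then $y$ lies on the center segment $[x,f(x)]$, after replacing $x$ by $f^{-1}(x)$ if $y$ lies on the other side of $x$.
\item Because $f^n$ preserves the order on the center leaf, $f^n(y)$ lies on $[f^n(x),f^{n+1}(x)]$. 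That segment has center length at most $1+\epsilon+O(\epsilon)$, which we call $K_2$ (enlarged to allow for the one-step shift in the previous step).
\item For general $c\in\NN$ and $d^c(x,y)<cK_1$, cut the center segment $[x,y]$ into $c$ pieces of length less than $K_1$ and apply the triangle inequality.
\end{enumerate}
The point needing care is compact center leaves, which are closed orbits. There the argument is the same on the universal cover of the leaf.

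\textbf{Item (3), global $c$-holonomy inside $cu$-leaves.} Within a $cu$-leaf, $\cF^c$ and $\cF^u$ are transverse one-dimensional foliations with a uniform local product structure. For $y\in\cF^c(x,f)$ and $p\in\cF^u(x)$, I would define $h^c_{x,y}(p)$ as follows.
\begin{enumerate}
\item Pull back by $f^{-n}$, so that $f^{-n}(p)$ is exponentially close to $f^{-n}(x)$ along $\cF^u$.
\item Item (2) applied to $f^{-1}$ (the same argument works for $f^{-1}\in\cU^{-1}$ near $g_{-1}$) keeps the center distance between $f^{-n}(x)$ and $f^{-n}(y)$ bounded by $cK_2$.
\item A finite chain of local holonomies of bounded length is then defined at the small unstable scale.
\item Push forward by $f^n$.
\end{enumerate}
The result is independent of $n$ by invariance, and it is continuous because each step is. The main obstacle is uniformity: the local holonomy must be defined along a center path of length up to $cK_2$ with domain size independent of the base point. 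For this I would use a compactness argument in the spirit of Lemma \ref{l.iota}, together with completeness of the leaves, as in \cite[Proposition 2.7]{SY}.
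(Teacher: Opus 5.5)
\textbf{Comparison with the paper.} The paper gives no argument for this proposition: each item is quoted from \cite{HPS} and \cite{SY}. Your item (1) is the same citation. For (2) and (3) you reconstruct the content yourself, and the reconstruction is sound.

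The ordering argument in (2) uses only three facts: $f$ fixes every center leaf, preserves its orientation, and moves points about one unit along it. It gives the bound for all $n\in\mathbb{Z}$. It also controls the length of the image arc $f^n([x,y])$, which is exactly what (3) needs when you pull back by $f^{-n}$ and chain local holonomies along a center path of length at most $cK_2$.

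You were right to read the target in (3) as $\cF^u(x,f)\to\cF^u(y,f)$, since the literal $\cF^{cu}(x,f)\to\cF^{cu}(y,f)$ is the same leaf. On closed center leaves the holonomy depends on the chosen center path, so your map is the holonomy along $[x,y]$ transported by $f^{-n}$. The citations buy the paper brevity; your route buys a self-contained and fairly elementary proof with the uniformity in the base point made explicit.

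\textbf{The step to repair.} The one step whose justification needs repair is the first step of (2). Closeness of $f$ to $g_1$, and of $h_f$ to the identity, are ambient statements. They place $f(x)$ near the point at center distance $1$ from $x$ on $\cF^c(x,f)$. But center leaves recur (most are dense), so two ambient-close points of one leaf need not be close along the leaf. On closed leaves, ambient closeness also does not single out the correct lift of $f$ restricted to the leaf.

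What you need is the plaque-wise form of the stability statement: the leaf conjugacy can be chosen so that $h_f(f(x))$ lies in a small center plaque around $g_1(h_f(x))$. Cite the HPS stability theory for this leafwise form, as the standard fact that $C^1$-perturbations of time-one maps of Anosov flows are discretized Anosov flows; the bare ``$h_f$ is $C^0$-close to the identity'' does not give it. You should also say explicitly why $f$ fixes each center leaf $L$: $h_f(f(L))=g_1(h_f(L))=h_f(L)$, because $g_1$ fixes every orbit.

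With these two points added, your argument is complete.
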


\subsection{Homogeneous growth is robust near $g_1$}
The goal of this subsection is to prove:
\begin{theorem}\label{t.geodesic}
	Let $g_1: \bM\to\bM$ be the time-one map of the geodesic flow on a compact surface of negative curvature. Then, there exists a $C^1$ neighborhood $\cU$ of $g_1$, such that for every $f\in \cU$, $\cF^u(f)$ has homogeneous exponential growth with $H_\cF = h_{top}(f)$.  
\end{theorem}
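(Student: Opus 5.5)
Two things need proof: that the growth of $\cF^u(f)$-disks is homogeneous, and that the common rate equals $h_{top}(f)$. The plan is to reduce both to the unperturbed geodesic flow, where unstable horocycles grow at a uniform exponential rate equal to $h_{top}(g_1)$. The transfer between $g_1$ and $f$ will go through the center-unstable leaves, using the center holonomy from Proposition \ref{p.p}.

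\emph{Step 1: the geodesic flow.} For $g_1$ itself, let $D$ be a $\cF^u(g_1)$-disk with $|D|\in[1,K_f)$. I claim that $|g_n(D)|_{\cF^u}\asymp_{C} e^{nh_{top}(g_1)}$ with $C$ uniform. Since $g_t$ is a topologically mixing contact Anosov flow on $T^1S$, Margulis' construction for flows gives unstable conditionals $\mu^u$ with exact scaling $(g_t)_*\mu^u=e^{-th}\mu^u$, where $h=h_{top}(g_1)$. These measures have full support and no atoms, and they are continuous in the leaf. By compactness and continuity, every unstable arc of length in $[1,K_f)$ then has $\mu^u$-mass in a fixed interval $[c^{-1},c]$. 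Next, cover $g_n(D)$ by a bounded-overlap family of unstable arcs of unit length. Counting these arcs in two ways gives $|g_n(D)|\asymp \mu^u(g_nD)=e^{nh}\mu^u(D)\asymp e^{nh}$. (Alternatively, one could use Margulis' equidistribution of horocycle lengths directly.)

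\emph{Step 2: transfer to $f\in\cU$.} Let $D$ be a $\cF^u(f)$-disk. Let $h_f$ be the center leaf conjugacy from Proposition \ref{p.p}(1). It maps $\cF^{cu}(f)$-leaves to $\cF^{cu}(g_1)$-leaves, but it need not map unstable leaves to unstable leaves. I would instead compare $f^n(D)$ with unstable arcs of $g_1$ inside the image $cu$-leaf, by projecting along center leaves.

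Inside a $cu$-leaf, the center foliation is one-dimensional and transverse to $\cF^u$. By Proposition \ref{p.p}(3), center holonomy is globally defined between unstable leaves in the same $cu$-leaf. By Proposition \ref{p.p}(2), center leaves are not expanded: points at center distance at most $cK_1$ stay at center distance at most $cK_2$ for all $n\in\NN$.

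I would project $f^n(D)$ along $\cF^c(f)$ onto a $g_1$-unstable leaf $h_f$-related to it, and show that this map is a coarse quasi-isometry $d\mapsto d'$ with $a^{-1}d-b\le d'\le ad+b$. Once this holds, the argument is exactly the quasi-isometry argument in the proof of Theorem \ref{t.Anosov}. Combining it with Step 1 gives $|f^n(D)|\asymp_{C_G}e^{nh}$, where $h=h_{top}(g_1)$.

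The main obstacle is this uniform quasi-isometry. My first attempt would lift to the universal cover of the $cu$-leaves. The $g_1$-$cu$-leaves are weak unstable leaves, and they are planes foliated by center lines and horocycles with a global product structure. Via $h_f$, which is $C^0$-close to the identity, the $f$-$cu$-leaves inherit this global product structure. Proposition \ref{p.p}(2) then bounds the center displacement uniformly along the whole orbit. Because the unstable leaves are uniformly transverse to the center, this bounded displacement converts into bounded additive error in unstable length.

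\emph{Step 3: identifying the rate.} By the Lemma following Definition \ref{d.HG}, homogeneous growth forces $H_\cF=\chi_{\cF^u}(f)=h^u_{top}(f)$. Since the center direction has zero growth, as follows from Proposition \ref{p.p}(2), the unstable entropy satisfies $h^u_{top}(f)=h_{top}(f)$; this is the variational result of \cite{HHW} for systems with non-expanding center. Finally, the quasi-isometry in Step 2 shows that $h_{top}(f)=h_{top}(g_1)$, which identifies the rate.
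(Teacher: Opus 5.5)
\textbf{There is a genuine gap in Step~2, and it carries into Step~3.} No uniform coarse quasi-isometry between $f^n(D)$ and $g_n$ of a $g_1$-unstable arc exists, and the conclusion that the rate is $h_{top}(g_1)$ is false.

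Take $f=g_{1+\vep}$, which lies in $\cU$ for $\vep$ small. Its center leaves are the flow orbits, $h_f=\mathrm{id}$ is a center leaf conjugacy, and $\cF^u(f)=\cF^u(g_1)$. Proposition~\ref{p.p}(2) holds trivially, since the flow is isometric along orbits. Yet $f^n(D)=g_{n(1+\vep)}(D)$, so your own Step~1 gives $|f^n(D)|_{\cF^u}\asymp e^{n(1+\vep)h_{top}(g_1)}$, and $h_{top}(f)=(1+\vep)h_{top}(g_1)$.

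The faulty step is the sentence ``Proposition~\ref{p.p}(2) then bounds the center displacement uniformly along the whole orbit.'' That proposition only controls the center distance between two $f$-orbits starting on a common center leaf. It says nothing about the center drift between $h_f\circ f^n$ and $g_n\circ h_f$, which here equals $n\vep$. Sliding an unstable arc a center time $\tau$ rescales its length by a factor exponential in $\tau$. So unbounded drift is an unbounded multiplicative error, not a bounded additive one.

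Comparing instead with $g_{cn}$ for a single constant $c$ does not repair this. For a time change whose speed function is not cohomologous to a constant, different periodic orbits drift at different average rates, so the drift varies from orbit to orbit. Homogeneity of $|f^n(D)|_{\cF^u}$ only appears after averaging over the spreading disk, and your argument has no mechanism for that averaging.

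\textbf{What is missing is a comparison of $f$-unstable disks with one another, intrinsic to $f$, not with $g_1$.} The paper gets it as follows.
\begin{itemize}
\item Robust minimality of $\cF^u(g_1)$ gives $N_0$ such that, for every $f\in\cU$, $f^{N_0}(D)$ crosses every product box $\cN_\delta(x,f)$ whenever $|D|_u\ge\delta$.
\item $\cF^{cs}$ is a transversal foliation along which $f$ is not expanding (Lemma~\ref{l.holonomy}). Hence holonomies between the $f^n$-images stay inside $cs$-plaques of size $R$, and Lemma~\ref{l.iota} sends pieces of length $\vep_0$ to pieces of length at least $\iota$.
\item This yields Lemma~\ref{l.compare2}: $|f^n(D_1)|_u\asymp_{C_1}|f^n(D_2)|_u$ uniformly in the disks and in $n$.
\item Sub- and super-multiplicativity of $Z_n=\sup_{|D|_u=\delta}|f^n(D)|_u$ (Lemmas~\ref{l.submulti} and~\ref{l.up}) then give $Z_n\asymp e^{nH_\cF}$ with $H_\cF=h^u_{top}(f)$.
\item Proposition~\ref{p.sy} gives $h^u_{top}(f)=h_{top}(f)$.
\end{itemize}
The first half of your Step~3, that a non-expanding center forces $h_{top}(f)=h^u_{top}(f)$, is in the right spirit and survives. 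But the common rate is $h_{top}(f)$, which in general differs from $h_{top}(g_1)$. Your Step~1 is correct but plays no role in a correct proof.
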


The rest of this section is dedicated to the proof of this theorem. The proof requires the following lemmas.
 {
\begin{lemma}\label{l.holonomy}
	For $\cU$ sufficiently small, all $f\in \cU$ satisfies that the foliation $\cF^{cs}$ is a transversal foliation in the sense of Definition \ref{d.TF}, along which $f$ is not expanding in the sense of Definition \ref{d.NE}. Furthermore, the constant $R$ can be made uniform in $\cU$. 
\end{lemma}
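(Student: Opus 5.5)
The plan is to check the three conditions of Definition~\ref{d.TF} for $\widetilde\cF=\cF^{cs}(f)$, then deduce the non-expansion property of Definition~\ref{d.NE} from Proposition~\ref{p.p}(2), keeping track of uniformity in $\cU$.

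\emph{Transversal foliation.} For $f\in\cU$, partial hyperbolicity and dynamical coherence (Proposition~\ref{p.p}(1)) give an $f$-invariant center-stable foliation $\cF^{cs}(f)$ tangent to $E^s\oplus E^c$. Its leaves are $C^1$ and the foliation is continuous. Since $E^{cs}\oplus E^u=T\bM$ with angles uniformly bounded below, it is transverse to $\cF=\cF^u(f)$. Shrinking $\cU$, all cone fields and angles are uniform, so the local product scale $\delta$ can be chosen independently of $f\in\cU$.

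\emph{Non-expansion.} Take $w\in\cF^{cs}_{\delta_1}(x)$. Using the local product structure inside the $cs$-leaf, write $w$ via a point $y\in\cF^c(x)$ with $d^c(x,y)\le C\delta_1$ and $w\in\cF^s_{C\delta_1}(y)$, where $C$ depends only on the angle bound. Under forward iteration the stable segment contracts, so $d^s(f^n y,f^n w)\le C\delta_1$ for all $n$. For the center part, choose $\delta_1$ with $C\delta_1<K_1$ and apply Proposition~\ref{p.p}(2) with $c=1$; this gives $d^c(f^nx,f^ny)\le K_2$ for all $n$. A path from $f^nx$ to $f^nw$ inside $\cF^{cs}(f^nx)$ then has length at most $K_2+C\delta_1$. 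Hence $f^n(\cF^{cs}_{\delta_1}(x))\subset\cF^{cs}_R(f^nx)$ with $R=K_2+C\delta_1+1$.

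\emph{Main obstacle: uniformity.} The constants $K_1,K_2$ of \cite[Lemma 2.5]{SY} must be uniform over $f\in\cU$. I would argue that they come from the center leaf conjugacy $h_f$ being uniformly $C^0$-close to the identity, and from $g_1$ being an isometry along flow lines. Then $d^c(f^nx,f^ny)$ is comparable, up to an additive error bounded by $2\sup_f\|h_f-\mathrm{id}\|$ together with a quasi-isometry constant of center leaves, to $d^c(g_n h_f x, g_n h_f y)=d^c(h_f x,h_f y)$. This bound is uniform once $\cU$ is small, so $R$ is uniform in $\cU$.
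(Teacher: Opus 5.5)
Your verification of Definition~\ref{d.TF}, and your reduction of Definition~\ref{d.NE} to Proposition~\ref{p.p}(2), are correct. You split $w\in\cF^{cs}_{\delta_1}(x)$ into a center piece and a stable piece, use $c=1$ for the center piece and contraction for the stable piece, and take $R=K_2+C\delta_1+1$. This is the natural unpacking of the paper's one-line proof, valid provided $C,\delta_1,K_1,K_2$ are uniform over $\cU$.

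The gap is your argument that $K_1,K_2$ are uniform. Comparing $d^c(f^nx,f^ny)$ with $d^c(g_nh_fx,g_nh_fy)$ presupposes $h_f\circ f^n=g_n\circ h_f$. But $h_f$ is only a center \emph{leaf} conjugacy. Pointwise you only know $h_f(f(z))=g_{1+\tau(z)}(h_f(z))$ for some small displacement $\tau$, so $h_f(f^nz)=g_{n+\sum_{k<n}\tau(f^kz)}(h_f(z))$. The flow-time separation of $h_f(f^nx)$ and $h_f(f^ny)$ therefore differs from that of $h_f(x)$ and $h_f(y)$ by $\sum_{k<n}\bigl(\tau(f^ky)-\tau(f^kx)\bigr)$. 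The $C^0$-closeness of $h_f$ to the identity bounds this only by $2n\|\tau\|_{C^0}$, and no quasi-isometry constant removes this linear drift. Indeed, on a compact center leaf the rotation number of $f$ can differ from that of $g_1$, so no pointwise comparison of the kind you use is available. As written, this step gives nothing uniform in $n$.

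What does give uniform constants is monotonicity along center leaves.
\begin{itemize}
\item Since $g_1$ preserves every flow line and $h_f$ conjugates the induced actions on center leaves, each center leaf of $f$ is $f$-invariant, and $f$ preserves its orientation.
\item Since $h_f(f(z))$ lies in the local center plaque of $g_1(h_f(z))$, and center plaques of $f$ are $C^1$-close to flow segments, after shrinking $\cU$ one has $d^c(z,f(z))\in[1/2,2]$ for all $z\in\bM$ and all $f\in\cU$.
\item If $d^c(x,y)<1/2$, then $y$ lies between $f^{-1}(x)$ and $f(x)$ on $\cF^c(x)$ (for a compact center leaf, argue on its universal cover). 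Hence $f^n(y)$ lies between $f^{n-1}(x)$ and $f^{n+1}(x)$, and $d^c(f^nx,f^ny)\le\max\{d^c(f^{n-1}x,f^nx),d^c(f^nx,f^{n+1}x)\}\le 2$ for all $n\in\NN$.
\end{itemize}
So $K_1=1/2$ and $K_2=2$ work for every $f\in\cU$. Combined with the uniform choice of $C$ and $\delta_1$, which comes from the continuous dependence of $E^s,E^c$ and of the foliations on $f$, your $R$ is then uniform.

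Alternatively, you can simply take the constants of Proposition~\ref{p.p}(2) to be uniform on $\cU$, as the paper's proof implicitly does. What you cannot do is derive this uniformity from the $C^0$-closeness of $h_f$ to the identity in the way you propose.
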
 

\begin{proof}
	This lemma follows directly from compactness and the non-expanding property of $f$ on the center leaves.
\end{proof} 

Similar to Section \ref{s.holonomy}, we denote by $\cF^*_r(x,f)$, $* = cs,u$ the (closed) ball of radius $r$ centered at $x$ in the leaf $\cF^*(x,f)$.
Henceforth, when no confusion is caused, we will call $\cF^u(f)$-disks simply by $u$-disks.

Let $\delta>0$ be the size of the local product structure formed by local leaves of $\cF^{cs}$ and $\cF^u$, and keep in mind Remark \ref{r.vep}. By the continuity of invariant foliations, $\delta$ can be made uniform in a $\cU$. As in Section \ref{s.holonomy}, $H^{loc}_{y,z}:\cF^u_\delta(y,f)\to \cF^u_\delta(z,f)$ are well-defined for all $y,z\in \cF^{cs}_\delta(x)$ and all $x\in\bM.$
We define the product neighborhood of a point $x\in\bM$ at scale $\delta$ to be the set 
$$
\cN_\delta(x,f) = \bigcup_{y\in \cF^u_{\delta}(x,f)} \cF^{cs}_\delta(y,f).
$$
For each $y\in \cN_\delta(x,f)$ we write $D^{x}_y$ the connected component\footnote{Note that if $\delta$ is sufficiently small, then there is only one connected component.} of $\cF^u_{4\delta}(y,f)\cap \cN_\delta(x,f)$ that contains $y$.

For the geodesic flow $(g_t)_{t\in\RR}$, it is well-known that the unstable foliation $\cF^u(g)$ is minimal in the sense that every leaf is dense in $\bM$. By compactness, there exists $N_0\in\NN$, such that for every $u$-disks $D$ with $|D|_{u} \ge \delta$, we have $g_{N_0}(D)$ fully crosses $\cN_\delta(x,g_1)$ for every $x\in\bM$.
It is straightforward to verify that this property is $C^1$  robust in the following sense: there exists a small $C^1$ neighborhood $\cU$ of $g_1$ such that for every $f\in\cU$ and every $u$-disks $D$ with $|D|\ge \delta$, $f^{N_0}(D)$ must fully cross $\cN_\delta(x,f)$. Then, the following lemma will allow us to match the sizes of different $u$-disks under iteration.

%
%
%
%

\begin{lemma}\label{l.compare2}
	Let $K_f = \|f\|_{C^1}$. Then, there exists a constant $C_{1}>1$, such that for all $u$-disks $D_1$ and $D_2$ with $|D_i|_u\in [\delta,K_f\delta)$, one has
	\begin{equation}\label{e.compare}
		|f^{n}(D_1)|_u\asymp_{C_{1}}|f^{n}(D_1)|_u
	\end{equation}
	for all $n\in\NN$
\end{lemma}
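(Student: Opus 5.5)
The statement has a typo: the right-hand side should be $|f^n(D_2)|_u$. The plan is to compare $f^n(D_1)$ and $f^n(D_2)$ by inserting one into the other via the robust crossing property at time $N_0$, followed by a $cs$-holonomy. The effect of that holonomy on lengths will be controlled using the non-expansion of $f$ along $\cF^{cs}$ (Lemma \ref{l.holonomy}) together with Lemma \ref{l.iota}.

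\textbf{Step 1 (crossing).} Apply the robust crossing property to $D_1$. Since $|D_1|_u\ge\delta$, the disk $f^{N_0}(D_1)$ fully crosses $\cN_\delta(x,f)$ for every $x$. Take $x$ to be the midpoint of $D_2$. Then $f^{N_0}(D_1)$ contains a sub-disk $D_1'=D^x_{y}$ for some $y\in\cF^{cs}_\delta(x')$ with $x'\in\cF^u_\delta(x)$. By construction, $D_1'$ is the image of a $u$-disk $D_2'\subset\cF^u_\delta(x)$ under the local holonomy $H^{loc}$ along $cs$-plaques. The disk $D_2'$ has length comparable to $\delta$, so $D_2$ and $D_2'$ differ by at most a bounded number of disks of size $\asymp\delta$.

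\textbf{Step 2 (holonomy preserves growth up to constants).} Show that for $cs$-holonomy related $u$-disks $J, J'=H^{loc}(J)$ inside a product neighborhood,
$$
|f^n(J)|_u\asymp_{C'}|f^n(J')|_u\quad\text{for all }n.
$$
Subdivide $f^n(J)$ into pieces of length in $[\iota,2\iota]$ (the finitely many short initial times are trivial). By non-expansion, $f^n$ maps each $cs$-plaque of size $\delta<\delta_1$ into a $cs$-ball of radius $R$. Hence $f^n\circ H^{loc}\circ f^{-n}$ is a semi-global holonomy of $cs$-distance at most $R$, exactly as in the proof of the Claim in Theorem \ref{t.holonomy}. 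Lemma \ref{l.iota} applied in both directions then shows the following: each $u$-disk of length at least $\iota$ on one side corresponds to a $u$-disk of length at least some $\iota'$ on the other side, while disks of length at most $\vep_0$ map into disks of bounded length. The bounded length follows from a compactness argument identical to Lemma \ref{l.iota}, since the holonomy is continuous with uniformly bounded $cs$-displacement. The piece counts on the two sides are therefore comparable, and so are the total lengths.

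\textbf{Step 3 (assemble).} From Steps 1 and 2,
$$
|f^{n+N_0}(D_1)|_u\ge|f^n(D_1')|_u\ge C'^{-1}|f^n(D_2')|_u\ge c\,|f^n(D_2)|_u .
$$
The last inequality holds because $D_2$ is covered by a bounded number $m$ of disks each holonomy-comparable (via a chain in a product neighborhood) to a disk like $D_2'$, again by Step 2. Combining this with $|f^{n+N_0}(D_1)|_u\le K_f^{N_0}|f^n(D_1)|_u$ gives one inequality with $C_1=C'mK_f^{N_0}$. Exchanging $D_1$ and $D_2$ gives the other, which proves \eqref{e.compare}.

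\textbf{Main obstacle.} The main obstacle is Step 2, specifically the upper bound that short pieces are not stretched by the semi-global holonomy. Lemma \ref{l.iota} only gives a lower bound on the image size. I would first try applying Lemma \ref{l.iota} to the inverse holonomy, which is also of $cs$-distance at most $R$: a piece of length at least $\iota$ on the image side pulls back to length at least $\iota$. This bounds the count in the reverse direction, and symmetry then yields both bounds.
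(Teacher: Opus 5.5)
Your proposal is correct and is essentially the paper's argument. You use the crossing at time $N_0$ to place a $cs$-holonomy image of $D_2$ inside $f^{N_0}(D_1)$, subdivide $f^n(D_2)$, and bound each image piece from below via non-expansion along $\cF^{cs}$ and Lemma \ref{l.iota}. You then absorb $K_f^{N_0}$ and swap $D_1,D_2$, and you are right that the right-hand side should read $|f^n(D_2)|_u$.

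The only difference is that the final swap means the paper needs only the one-sided bound that $\vep_0$-pieces map to pieces of length at least $\iota$. So the upper bound you single out as the main obstacle is never used. Likewise, full crossing gives $D_2\subset\cF^u_\delta(x)=D_2'$, so the last inequality in your Step 3 is plain monotonicity.
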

\begin{proof}
	We will only consider the case where $|D_1|_u = |D_2|_u = \delta$; once this is proven, the general case can be obtained by covering each $D_i$ by no more than $\lceil K_f + 1\rceil$ many pieces, each of which has length one. 
	
	Let $D_1,D_2$ be any $u$-disks with $|D_i|_u=\delta$, $i=1,2$ and let $x_0$ be the center of $D_2$. By the discussion above, we see that $f^{N_0}(D_1)$ fully crosses $\cN_\delta(x_0,f)$. Denote by $\widetilde D_1$ the $u$-disk that is $f^{N_0}(D_1)\cap \cN_\delta(x_0,f)$,  and write $y_0\in\widetilde D_1$ the transversal point of intersection $\cF^{cs}_\delta(x_0)\pitchfork \widetilde D_1$. Below we will show that there exists a constant $C_0>1$, independent of $D_1,D_2$, such that 
	\begin{equation}\label{e.compare1}
	|f^{n}(\widetilde D_1)|_u \ge \frac{1}{C_0}|f^n(D_2)|_u, \mbox{ for all $n\in\NN$}.
	\end{equation}
	This immediate implies that 
	$$
	|f^{n+N_0}( D_1)|_u \ge|f^{n}(\widetilde D_1)|_u \ge \frac{1}{C_0}|f^n(D_2)|_u \ge  \frac{1}{K_f^{N_0}C_0}|f^{n+N_0}(D_2)|_u
	$$
	for all $n\in \NN$. Then the lemma follows by taking $C_1\gg\lambda^{N_0}C_0$ to cover the first $N_0$ iterates and switching $D_1$ with $D_2$. 
	
	Below we prove \eqref{e.compare1}.

	Recall that $\iota,\vep_0>0$ are the constants given by Lemma \ref{l.iota}.
	For any $n\in\NN$ we write $L_n = \lceil{|f^n(D_2)|_u}/{\vep_0}\rceil+ 1$, and cut $f^n(D_2)$ into $L_n$-many $u$-disks with lengths $\vep_0$ (except maybe for the last one) that only intersect at boundary points. These disks can be pulled back by $f^{-n}$ to become a finite partition of $D_2$ into $L_n$-many $u$-disks $D_2^1,\cdots, D_2^{L_n}$ with boundary points $x_1,\cdots, x_{L_n+1}$; each $D_2^j$ is sent by $H^{loc}_{x_0,y_0}$ to a disk $\widetilde D_1^j = H^{loc}_{x_0,y_0}(D_2^j)\subset\widetilde D_1$. $\{\widetilde D_1^j, j=1,\cdots, L_n\}$ form a finite partition of $\widetilde D_1$ into $u$-disks, all of which only intersect at their boundary points $y_j = H^{loc}_{x_0,y_0}(x_j)$. 
	
	Iterating by $f^n$, we see that the general holonomy $\widetilde H_{f^n(x_j),f^n(y_j)}$ must send each $f^{n}(D_2^j)$ to $f^n(\widetilde D_1^j)$. Since $f$ is not expanding on $\cF^{cs}(f)$ and $|f^{n}(D_2^j)|_u=\vep_0$ for all $j$ due to the construction, Lemma \ref{l.iota} shows that $|f^n(\widetilde D_i^j)|_u\ge \iota$ for each $j$. Summing over $j$, we obtain that 
	$$
	|f^n(\widetilde D_1)|\ge L_n\cdot\iota \ge \frac{\iota}{\vep_0}|f^n(D_2)|,
	$$
	as required. This finishes the proof of \eqref{e.compare1} and consequent that of Lemma \ref{l.compare2}.

\end{proof}

\begin{remark}
	We note that all the constants $\delta, \iota,\vep_0, R,K_f$ can be made uniform in $\cU$.
\end{remark}

}

Given a $u$-disk $D$, following the definition of unstable topological entropy in \cite{HSX}, we define 
$$
\chi_u(D) =\limsup_{n\to\infty}\frac1n\log|f^n(D)|.
$$ 
Then, Lemma \ref{l.compare2} implies that $\chi_u(D)$ is independent of $D$. 
Denote by $H_f$ the common value of $\chi_u(D)$. Then, by \cite{HSX} and \cite[Theorem C]{HHW}, we have
\begin{equation}\label{e.uentropy}
	H_\cF = \chi_u(D) = h_{top}^u(f).
\end{equation}

So far we have shown that the (limsup of) exponential growth rate of $|f^n(D)|_u$ are all equal to $H_\cF = h_{top}^u(f)$. It remains to obtain an asymptotic bound of the form $|f^n(D)|_u\asymp e^{nH_\cF}$. For this we use ideas similar to \cite[Section 6]{CPZ} and \cite[Section 6]{CPZ1}. The proof in our case is much simpler since we only need to estimate the length, whereas \cite{CPZ,CPZ1} consider the cardinality of $u$-separated sets.

We define
\begin{equation}\label{e.Z_n}
	Z_n :=\sup_{|D|_u=\delta} |f^n(D)|_u,
\end{equation}
where the supremum is taken over all $u$-disks $D$ with $|D|_u = \delta$.  The following lemma establishes a version of sub-multiplicativity for $Z_n.$

\begin{lemma}\label{l.submulti}
	For every $k,\ell\in\NN$ and any $u$-disk $D$ with $|D|_u\in [\delta,K_f\delta)$, we have 
	$$
	|f^{k+\ell}(D)|_u\le \frac{2}{\delta}Z_\ell \cdot  |f^k(D)|_u.
	$$
\end{lemma}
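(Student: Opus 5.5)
Cut the long disk $f^k(D)$ into pieces of length exactly $\delta$, and bound the $\ell$-th iterate of each piece by $Z_\ell$.

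\textbf{Step 1: subdivision.} Fix $k,\ell$ and the $u$-disk $D$. Put $m=\lceil |f^k(D)|_u/\delta\rceil$. Cut $f^k(D)$ into $m$ consecutive $u$-disks $J_1,\dots,J_m$ that meet only at endpoints, with $|J_i|_u=\delta$ for $i<m$ and $|J_m|_u\le\delta$. The last piece may be short. Since $f^k(D)$ is a curve of length at least $\delta$, I replace $J_m$ by the terminal sub-arc of $f^k(D)$ of length exactly $\delta$. The new $J_m$ overlaps $J_{m-1}$, but it still has length $\delta$ and still covers what it needs to. The $J_i$ then cover $f^k(D)$, and each has length exactly $\delta$.

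\textbf{Step 2: apply $Z_\ell$.} Since $f^\ell$ is injective on leaves, the disks $f^\ell(J_i)$ cover $f^{k+\ell}(D)$. By the definition \eqref{e.Z_n} of $Z_\ell$,
$$
|f^{k+\ell}(D)|_u\le \sum_{i=1}^m |f^\ell(J_i)|_u\le m\,Z_\ell .
$$

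\textbf{Step 3: count pieces.} Here I use the lower bound on the length of $f^k(D)$. Because $f$ expands $\cF^u$ by \eqref{e.exp} and $|D|_u\ge\delta$, we have $|f^k(D)|_u\ge\delta$. Therefore
$$
m\le \frac{|f^k(D)|_u}{\delta}+1\le \frac{2}{\delta}\,|f^k(D)|_u,
$$
which combined with Step 2 gives the claimed inequality. Only the lower bound $|D|_u\ge\delta$ is used, so the upper bound $K_f\delta$ plays no role.

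The only delicate point is the short last piece, because $Z_\ell$ is defined only for disks of length exactly $\delta$. The overlapping-replacement trick in Step 1 handles it. As an alternative, one could note that $|f^\ell(J)|_u$ is monotone under inclusion of disks, extend the short piece inside its leaf to length $\delta$, and use $Z_\ell$ on the extension.
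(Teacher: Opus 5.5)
Your proof is correct and follows the same route as the paper. You subdivide $f^k(D)$ into at most $|f^k(D)|_u/\delta+1$ pieces of length $\delta$, bound each $\ell$-th iterate by $Z_\ell$, and use $|f^k(D)|_u\ge\delta$ to absorb the $+1$. Your overlapping-replacement treatment of the short last piece also makes explicit the step the paper uses without comment, namely that a piece of length at most $\delta$ still has its $\ell$-th image bounded by $Z_\ell$.
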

\begin{proof}
	Let $D$ be any $u$-disk with $|D_u|\in [\delta,K_f\delta)$. 
	We cut $f^k(D)$ into $n_D(k):=\left(\lfloor |f^k(D)|_u/\delta\rfloor + 1\right)  $-many piece of $u$-disks with length $\delta$ (except for the last piece whose length is at most $\delta$), denoted by $D_1,\cdots, D_{n_D(k)}$. Then, we have
	\begin{align*}
		|f^{k+\ell}(D)|_u &= \sum_{j=1}^{n_D(k)}|f^\ell(D_j)|_u \le \sum_{j=1}^{n_D(k)} Z_\ell \\
		&\le \left(\frac{|f^k(D)|_u}{\delta} + 1\right) Z_\ell\\
		&\le \frac{2}{\delta}Z_\ell\cdot |f^k(D)|_u.
	\end{align*}
\end{proof}

The next proposition establishes the lower bound on the growth of $|f^n(D)|_u$.
\begin{proposition}\label{p.low}
	There exists a constant $C_2>0$ such that every $u$-disk $D$ with $|D|_u\in[1,K_f)$ satisfies 
	$$|f^n(D)|_u\ge C_2e^{nH_\cF}, \forall n\in\NN.$$
\end{proposition}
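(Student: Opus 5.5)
We derive the lower bound from the sub-multiplicativity in Lemma \ref{l.submulti}, using the standard Fekete-type argument as in \cite[Section 6]{CPZ}. Throughout we work with disks of length $\delta$ (Remark \ref{r.vep}). The case $|D|_u\in[1,K_f)$ then follows by cutting $D$ into a bounded number of pieces of length $\delta$, together with Lemma \ref{l.compare2}.

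\emph{Step 1: $Z_n$ controls every disk from below.} By Lemma \ref{l.compare2}, every $u$-disk $D$ with $|D|_u\in[\delta,K_f\delta)$ satisfies
$$|f^n(D)|_u\ge C_1^{-1}Z_n \quad\text{for all } n\in\NN.$$
It therefore suffices to show that $Z_n\ge c\,e^{nH_\cF}$ for a uniform $c>0$.

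\emph{Step 2: sub-multiplicativity of $Z_n$.} Take $D$ with $|D|_u=\delta$. Apply Lemma \ref{l.submulti}, then bound $|f^k(D)|_u\le Z_k$. Taking the supremum over $D$ gives
$$Z_{k+\ell}\le \tfrac{2}{\delta}\,Z_k Z_\ell.$$
Set $a_n=\log\big(\tfrac{2}{\delta}Z_n\big)$. Then $a_{k+\ell}\le a_k+a_\ell$, so $(a_n)$ is subadditive. By Fekete's lemma,
$$\lim_n \tfrac{a_n}{n}=\inf_n \tfrac{a_n}{n}.$$

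\emph{Step 3: identify the limit.} By Step 1 and the definition of $\chi_u$, the limit equals $\chi_u(D)=H_\cF$ for any disk of length $\delta$. Here Lemma \ref{l.compare2} is what allows us to pass from the supremum defining $Z_n$ to an individual disk, since all disks have comparable growth. Since the limit is also the infimum, $a_n\ge nH_\cF$ for every $n$, that is,
$$Z_n\ge \tfrac{\delta}{2}e^{nH_\cF}.$$
Combining this with Step 1 gives $|f^n(D)|_u\ge \tfrac{\delta}{2C_1}e^{nH_\cF}$ for all disks of length in $[\delta,K_f\delta)$. Passing to disks of length in $[1,K_f)$ by subdivision, as described above, produces the constant $C_2$.

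The only delicate point is Step 3. Fekete's lemma gives the limit along the sequence $Z_n$, whereas $H_\cF$ was defined as a limsup for a fixed disk. These agree because, by Lemma \ref{l.compare2}, $|f^n(D)|_u$ and $Z_n$ are comparable uniformly in $n$. Once that is noted, the infimum characterization immediately yields the uniform bound, with no additional loss in the constant.
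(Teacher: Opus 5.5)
Your proof is correct and follows the paper's route: the sub-multiplicativity from Lemma \ref{l.submulti}, the ``limit equals infimum'' principle giving $Z_n\ge\frac{\delta}{2}e^{nH_\cF}$, and then Lemma \ref{l.compare2} to pass to every disk. The only difference is cosmetic. You apply Fekete's lemma to $\log\bigl(\tfrac{2}{\delta}Z_n\bigr)$ and use Lemma \ref{l.compare2} to identify the limit, whereas the paper iterates the bound along multiples $nk$ for a fixed disk and cites \cite{CPZ} for the existence of the limit.
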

\begin{proof}
	Let $D$ be any $u$-disk with $|D|_u=1$. We apply Lemma \ref{l.submulti} iteratively to obtain
	$$
	|f^{nk}(D)|_u \le \frac{2}{\delta}Z_k |f^{(k-1)n}(D)|_u\le \cdots \le\left(\frac{ 2}{\delta}\right)^{k} (Z_n)^k.  
	$$
	Taking logarithm, dividing by $nk$ and sending $k$ to infinite\footnote{Here the limit of the left-hand side exists and equals $H_\cF$ is due to \cite[Lemma 6.2 and 6.3]{CPZ}.} yields
	$$
	H_\cF \le \frac{\log 2-\log\delta}{n} + \frac{\log Z_n}{n},
	$$
	which becomes 
	$$
	\log Z_n\ge -\log 2 +\log\delta+ nH_\cF,  
	$$
	that is, $Z_n\ge \frac\delta2e^{nH_\cF}$. It follows from Lemma \ref{l.compare2} that any $u$-disk $D$ with $|D|_u=\delta$ satisfies 
	$$
	|f^n(D)|_u\ge \frac{1}{C_{1}} Z_n\ge \frac{\delta}{2C_{1}}\cdot e^{nH_\cF},
	$$
	that is, the lemma holds with $C_2 = \delta(2C_{1})^{-1}$.
	For the general case $|D|_u\in[\delta,K_f\delta)$, the same estimate follows by considering a sub-disk with length $\delta$. 
\end{proof}

To obtain the upper bound, we establish the following form of super-multiplicativity for $Z_n.$
\begin{lemma}\label{l.up}
	For any $u$-disk $D$ with $|D|_u=\delta$ and any $k,\ell\in\NN$, one has 
	$$
	|f^{k+\ell}(D)|_u \ge \frac{1}{2\delta C_{1}}Z_\ell\cdot |f^k(D)|_u. 
	$$ 
\end{lemma}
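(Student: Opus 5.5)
The plan is to mirror the proof of Lemma \ref{l.submulti}, but cut $f^k(D)$ into pieces of length exactly $\delta$ and discard the last, shorter piece. The loss from discarding it is controlled by the comparison Lemma \ref{l.compare2}.

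\textbf{Step 1 (cutting).} Fix a $u$-disk $D$ with $|D|_u=\delta$ and $k,\ell\in\NN$. Since $f$ expands $u$-disks, $|f^k(D)|_u\ge \delta$. Set
$$
m=\lfloor |f^k(D)|_u/\delta\rfloor\ge 1 .
$$
Cut $f^k(D)$ into $m$ consecutive $u$-disks $D_1,\dots,D_m$, each of length exactly $\delta$, together with a remainder of length less than $\delta$. The disks $D_j$ meet only at endpoints and sit inside $f^k(D)$, so
$$
|f^{k+\ell}(D)|_u\ge \sum_{j=1}^m |f^\ell(D_j)|_u .
$$

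\textbf{Step 2 (comparison with $Z_\ell$).} Each $D_j$ has length $\delta$. By Lemma \ref{l.compare2}, applied to $D_j$ and to an arbitrary $u$-disk $D'$ with $|D'|_u=\delta$, we get $|f^\ell(D_j)|_u\ge C_1^{-1}|f^\ell(D')|_u$. Taking the supremum over $D'$ gives
$$
|f^\ell(D_j)|_u\ge C_1^{-1}Z_\ell .
$$
For this step I read \eqref{e.compare} as comparing $f^n(D_1)$ with $f^n(D_2)$; the printed version has an evident typo. Its proof via holonomy and Lemma \ref{l.iota} gives exactly that comparison, uniformly in $n$.

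\textbf{Step 3 (counting).} The previous two steps give $|f^{k+\ell}(D)|_u\ge m\,C_1^{-1}Z_\ell$. Since $m\ge 1$ and $m>|f^k(D)|_u/\delta-1$, we have
$$
m\ge \frac{|f^k(D)|_u}{2\delta}.
$$
Indeed, if $|f^k(D)|_u\le 2\delta$ this follows from $m\ge 1$; otherwise $m\ge |f^k(D)|_u/\delta-1\ge |f^k(D)|_u/(2\delta)$. Combining,
$$
|f^{k+\ell}(D)|_u\ge \frac{1}{2\delta C_1}Z_\ell\,|f^k(D)|_u,
$$
which is the claimed inequality.

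The main obstacle is Step 2: the argument needs the comparison constant $C_1$ to be uniform over all pairs of length-$\delta$ disks and all $n$. This is exactly what Lemma \ref{l.compare2} provides, through minimality (the uniform crossing time $N_0$) and the non-expansion of $\cF^{cs}$. The rest is bookkeeping, and the only care needed is the case where $|f^k(D)|_u$ is close to $\delta$, which Step 3 handles.
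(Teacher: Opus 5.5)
Your proof is correct and is essentially the paper's argument: cut $f^k(D)$ into $\lfloor |f^k(D)|_u/\delta\rfloor$ pieces of length $\delta$, bound each $|f^\ell(D_j)|_u$ from below by $C_1^{-1}Z_\ell$ via Lemma~\ref{l.compare2} (with the $D_1$/$D_2$ typo corrected, as you note), and count. The only difference is cosmetic: the paper merges the short remainder into the last piece (length in $[\delta,2\delta)$) and keeps an equality, whereas you discard it, which means Lemma~\ref{l.compare2} is only ever applied to disks of length exactly $\delta$; your check that $\lfloor |f^k(D)|_u/\delta\rfloor \ge |f^k(D)|_u/(2\delta)$ spells out the counting step the paper leaves implicit.
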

\begin{proof}
	Similar to Lemma \ref{l.submulti}, we cut $|f^k(D)|_u$ into $n'_D(k):=\lfloor |f^k(D)|_u/\delta \rfloor $ many piece of $u$-disks with length $\delta$ (except for the last one, whose length is in $[\delta,2\delta)$). Then, Lemma \ref{l.compare2} gives 
	\begin{align*}
	|f^{k+\ell}(D)|_u &= \sum_{j=1}^{n'_D(k)}|f^\ell(D_j)|_u \ge \frac{1}{C_{1}} \sum_{j=1}^{n'_D(k)} Z_\ell \\
	&\ge \frac{1}{2\delta C_{1}} Z_\ell\cdot  |f^k(D)|_u.
\end{align*}
\end{proof}
Then, the next proposition provides an upper bound for the growth of $|f^n(D)|$.
\begin{proposition}\label{p.up}
	There exists a constant $C_3>1$ such that every $u$-disk $D$ with $|D|_u\in[1,K_f)$ satisfies 
	$$|f^n(D)|_u\le C_3e^{nH_\cF}, \forall n\in\NN.$$
\end{proposition}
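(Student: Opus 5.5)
The upper bound will come from the super-multiplicativity in Lemma~\ref{l.up}, by a Fekete-type argument mirroring the proof of Proposition~\ref{p.low}. Lemma~\ref{l.up} holds for every $u$-disk $D$ with $|D|_u=\delta$. Fix such a disk, fix $n\in\NN$, and apply Lemma~\ref{l.up} iteratively with $\ell=n$ and $k=n,2n,\dots,(m-1)n$. This gives
$$
|f^{mn}(D)|_u\ \ge\ \Big(\frac{1}{2\delta C_1}\Big)^{m-1} Z_n^{m-1}\,|f^{n}(D)|_u\ \ge\ \Big(\frac{Z_n}{2\delta C_1}\Big)^{m-1}\frac{Z_n}{C_1},
$$
where the last inequality uses Lemma~\ref{l.compare2} to compare $|f^n(D)|_u$ with $Z_n$.

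Next, take logarithms, divide by $mn$, and let $m\to\infty$. The left-hand side tends to $H_\cF$; this is the same input used in Proposition~\ref{p.low}, namely \cite[Lemma 6.2 and 6.3]{CPZ} together with \eqref{e.uentropy}. We obtain
$$
H_\cF\ \ge\ \frac{\log Z_n-\log(2\delta C_1)}{n},
$$
that is, $Z_n\le 2\delta C_1\, e^{nH_\cF}$ for every $n$. Since $Z_n$ is the supremum over disks of length $\delta$, every $u$-disk of length exactly $\delta$ satisfies $|f^n(D)|_u\le 2\delta C_1 e^{nH_\cF}$.

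Finally, extend this to a general $u$-disk $D$ with $|D|_u\in[1,K_f)$, or $[\delta,K_f\delta)$ under Remark~\ref{r.vep}. Cover $D$ by at most $\lceil K_f/\delta\rceil+1$ sub-disks of length at most $\delta$, each contained in a disk of length exactly $\delta$, and sum. This gives $|f^n(D)|_u\le C_3e^{nH_\cF}$ with $C_3=(\lceil K_f/\delta\rceil+1)\,2\delta C_1$, which is independent of $D$ and $n$.

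The one delicate point is justifying that the limit of $\frac{1}{mn}\log|f^{mn}(D)|_u$ exists and equals $H_\cF$, rather than being merely a limsup. If I only knew $\chi_u(D)=H_\cF$ as a limsup, I would argue instead by contradiction. Suppose $Z_{n_0}>2\delta C_1 e^{(H_\cF+\eta)n_0}$ for some $n_0$ and some $\eta>0$. The displayed product bound then forces the liminf along multiples of $n_0$ to exceed $H_\cF+\eta$. But $\chi_u(D)=H_\cF$ bounds the limsup by $H_\cF$, which is a contradiction. Taking $\eta\to0$ recovers $Z_n\le 2\delta C_1e^{nH_\cF}$. So only the limsup characterization \eqref{e.uentropy} is genuinely needed, and the argument is robust to that subtlety.
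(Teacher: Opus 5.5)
Your proposal is correct and is essentially the paper's proof: iterate Lemma~\ref{l.up} along multiples of $n$, let the number of blocks tend to infinity to get $Z_n\le 2\delta C_1e^{nH_\cF}$, and then cover a general disk by disks of length $\delta$. You also observe that this direction only needs $\limsup_{n}\frac1n\log|f^n(D)|_u\le H_\cF$, so the existence of the limit, which the paper imports from the literature, is not required here; that observation is valid, and so is your more careful count of $\lceil K_f/\delta\rceil+1$ covering pieces.
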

\begin{proof}
	We use Lemma \ref{l.up} iteratively to obtain, for any $u$-disk $D$ with $|D|_u = \delta$,
	\begin{align*}
		|f^{kn}(D)|_u\ge \frac{1}{2\delta C_{1}}Z_n \cdot |f^{(k-1)n}D|_u \ge \cdots \ge \frac{1}{(2\delta C_{1})^k}\cdot Z_n^k.
	\end{align*}
	Taking logarithm, dividing by $kn$ and sending $k$ to infinite,\footnote{Here the limit of the left-hand side exists and equals $H_\cF$ is due to \cite[Lemma 6.4]{CPZ}.} we see that 
	$$
 H_\cF \ge -\frac{\log 2\delta C_{1}}{n} + \frac{\log Z_n}{n}, 
	$$
	that is,
	$$
	Z_n\le 2\delta C_{1}e^{nH_\cF}.
	$$
	By the definition of $Z_n$ we conclude that $|f^n(D)|_u\le Z_n\le 2\delta C_{1}e^{nH_\cF}$ for any $u$-disk $D$ with $|D|_u = \delta$. For the general case of $|D|_u\in[\delta,K_f\delta)$, one can find a larger disk $\widetilde D$ such that $D\subset \widetilde D$ and $|\widetilde D|_u = \lceil K_f\rceil + 1$. Then, cutting $\widetilde D$ into $|\widetilde D|_u$-many piece of $u$-disks, each of which has length $\delta$, we get
	$$
	|f^n(D)|_u\le |f^n(\widetilde D)|_u\le |\widetilde D|_u\cdot 2\delta C_{1}e^{nH_\cF},
	$$
	and the proposition follows by taking $C_3 = |\widetilde D|_u\cdot 2\delta C_{1} = (\lceil K_f\rceil + 1)2\delta  C_{1}$.
\end{proof}

Combining Proposition \ref{p.low} and Proposition \ref{p.up} we see that $\cF^u(f)$ has homogeneous growth with constant $C_G = \max\{C_2^{-1}, C_3\}$ and $H_\cF = h^u_{top}(f)$. It remains to show that $h^u_{top}(f) = h_{top}(f)$. We introduce some notations first. Recall that $f$ has local product structure at scale $\delta>0$. We define, for $x\in\bM$,
$$
A(x,\delta) = \{y: \exists x^*\in \cF^*_\delta(x,f), *=c,u, \mbox{s.t. } \{y\} = \cF^c_{2\delta}(x^u,f) \cap \cF^u_{2\delta}(x^c,f) \},
$$
and consider the following product neighborhood of $x$:
$$
D(x,\delta) = \cF^s_{\delta}(A(x,\delta),f) = \bigcup_{z\in A(x,\delta)} \cF^s_\delta(z,f).
$$
For any compact set $K\subset \bM$, we denote by $h(f,K)$ the topological entropy of $f$ on $K$, that is, the exponential growth rate of the maximal cardinality of separated sets on $K$. Keep in mind that $f$ is partially hyperbolic with one-dimensional center, and is therefore entropy expansive (see \cite{LVY}). For this reason, the scale of the separated sets does not matter as long as it is sufficiently small. 

We recall the following result from \cite{SY}.

\begin{proposition}\label{p.sy}\cite[Proposition 2.15]{SY}
	Let $f$ be a dynamically coherent partially hyperbolic diffeomorphism with one-dimensional center on a compact manifold $\bM$. Suppose that $f$ is not expanding along the center foliation. Then, there exists $\delta_0>0$ such that for all $\delta<\delta_0$ one has 
	$$
	h(f,D(x,\delta)) = h(f,\cF^u_\delta(x,f)).
	$$
\end{proposition}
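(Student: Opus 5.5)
The inequality $h(f,\cF^u_\delta(x,f))\le h(f,D(x,\delta))$ is immediate from $\cF^u_\delta(x,f)\subset D(x,\delta)$. For the reverse inequality I would reduce $D(x,\delta)$ to $\cF^u_\delta(x,f)$ in two steps. First I remove the stable directions, then the center directions. In each step I show that the extra directions cost only a subexponential factor in the number of Bowen balls. Throughout I use entropy expansiveness, which makes $h(f,K)$ independent of the scale $\vep$ of separated or spanning sets once $\vep$ is small. This lets me lose fixed multiplicative factors in $\vep$. The threshold $\delta_0$ is chosen so that $2\delta$ lies below these scales and the local product structure is defined.

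\emph{Step 1: stable directions.} Every $y\in D(x,\delta)$ lies in $\cF^s_\delta(z,f)$ for some $z\in A(x,\delta)$. Then $d(f^ky,f^kz)\le \delta$ for all $k\ge0$, because $\cF^s$ is uniformly contracted. Hence an $(n,\vep)$-spanning set of $A(x,\delta)$ is an $(n,\vep+\delta)$-spanning set of $D(x,\delta)$, and so $h(f,D(x,\delta))\le h(f,A(x,\delta))$ after adjusting scales.

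\emph{Step 2: center directions.} By construction, every point $y\in A(x,\delta)$ lies on a center plaque $\cF^c_{2\delta}(x^u,f)$ with $x^u\in\cF^u_\delta(x,f)$. Fix an $(n,\vep)$-spanning set $E$ of $\cF^u_\delta(x,f)$. For each $z\in E$ and each $x^u\in B_n(z,\vep)$, I project $y\in\cF^c_{2\delta}(x^u)$ along the local unstable leaf inside $\cF^{cu}$ to the point $y'=\cF^u_{loc}(y)\cap\cF^c(z)$.

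The main obstacle is to show that $d(f^ky,f^ky')$ stays small for $0\le k\le n$, even though unstable distances grow. My plan here is as follows. The pair $(f^ky,f^ky')$ is the image of the pair $(f^kx^u,f^kz)$ under a center holonomy inside a $cu$-leaf. This holonomy is between points at center distance at most $R$, since $f$ is not expanding along $\cF^c$ (Proposition \ref{p.p}(2) and Lemma \ref{l.holonomy}). I would prove a uniform continuity statement for such holonomies by the same compactness argument as in Lemma \ref{l.iota}, using Proposition \ref{p.p}(3) for the global definition of the $c$-holonomy in $cu$-leaves. The statement is: for every $\vep'>0$ there is $\vep>0$ such that unstable distance $<\vep$ between base points forces unstable distance $<\vep'$ between holonomy images, uniformly over all base points at center distance $\le R$. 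Given this, $y\in B_n(y',\vep'+\text{small})$.

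It then remains to cover the center curves $\gamma=\cF^c_{2\delta}(z,f)$, for $z\in E$, by Bowen balls. By non-expansion, $|f^k\gamma|\le R'$ for all $k$. I cut $\gamma$ at every point where some $f^k(\text{piece})$, $0\le k\le n$, reaches length $\vep$. This produces at most $\sum_{k\le n}|f^k\gamma|/\vep\le nR'/\vep$ cut points. Each resulting piece lies in a single $(n,\vep)$-Bowen ball. Hence $A(x,\delta)$ is covered by at most $\#E\cdot(nR'/\vep+1)$ Bowen balls of radius comparable to $\vep$. This polynomial factor disappears in the exponential growth rate, which gives $h(f,A(x,\delta))\le h(f,\cF^u_\delta(x,f))$ and, with Step 1, the proposition.
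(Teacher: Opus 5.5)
The paper contains no proof of this statement: it is quoted from \cite[Proposition 2.15]{SY}, so there is no in-paper argument to compare yours with. Your reduction is a correct self-contained proof. Stable plaques are shadowed by their base points in $A(x,\delta)$. Center arcs are one-dimensional and non-expanding, so after you transport them along unstable plaques to center arcs through an $(n,\vep)$-spanning set $E$ of $\cF^u_\delta(x,f)$, they cost only a factor polynomial in $n$.

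The one step you assert rather than prove is the identification of $f^k(y')$ with the image of $f^k(z)$ under the center holonomy along $f^k([x^u,y]_c)$. Here $[x^u,y]_c$ is the center arc from $x^u$ to $y$. You obtain this identification from Proposition \ref{p.p}(3) and Lemma \ref{l.holonomy}, but those are facts about $f\in\cU$, not hypotheses of the statement. With only local holonomies the identification is not automatic. The map $f^k$ turns the short unstable rungs joining $[x^u,y]_c$ to $\cF^c(z)$ into rungs of a priori uncontrolled length, and a center leaf can cross a long unstable segment several times.

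The fix is an induction on $k\le n$:
\begin{itemize}
\item Suppose that at time $k$ every rung, from $f^k(w)$ with $w\in[x^u,y]_c$ to the center arc through $f^k(z)$, has unstable length $<\vep'$.
\item At time $k+1$ these rungs have length $<K_f\vep'$, which is below the local product scale inside $\cF^{cu}$-leaves.
\item An open--closed argument then shows that the iterated ladder is exactly the local holonomy ladder along $f^{k+1}([x^u,y]_c)$.
\item Apply your uniform continuity estimate to every initial subarc $f^{k+1}([x^u,w]_c)$, each of length $\le 2R$, with base rung $d^u(f^{k+1}x^u,f^{k+1}z)<\vep$. This brings all rungs back below $\vep'$.
\end{itemize}
The case $k=0$ holds because everything lies in a single product chart of size $O(\delta)$. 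This argument uses only the local product structure inside $\cF^{cu}$-leaves and compactness, so it gives the proposition in the generality stated.

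Some bookkeeping to correct:
\begin{itemize}
\item The point $y'$ lies in $\cF^c_{3\delta}(z,f)$ rather than $\cF^c_{2\delta}(z,f)$, so that is the arc $\gamma$ you must cover. Either take $3\delta\le\delta_1$ or split $\gamma$ into boundedly many pieces.
\item The bound $|f^k\gamma|\le 2R$ is where one-dimensionality of the center is used: an embedded arc inside $\cF^c_R(f^kz,f)$ has length at most $2R$.
\item The cut-point count is $(n+1)(2R/\vep+1)$, which is still polynomial in $n$.
\end{itemize}

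Finally, entropy expansiveness is genuinely needed only in Step 1, where the offset $\delta$ is fixed; in Step 2 you can let $\vep,\vep'\to0$ together. Even Step 1 can be done without it. The stable distance $d(f^ky,f^kz)$ decays exponentially, so it is small after a fixed number $m$ of iterates. The first $m$ iterates are handled by covering $D(x,\delta)$ with boundedly many sets of small diameter.
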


Next we prove that $h_{top}(f) = h_{top}^u(f)$ robustly near $g_1$. 
\begin{proposition}\label{p.entropy}
	Let $g_1: \bM\to\bM$ be the time-one map of the geodesic flow on a compact surface of negative curvature. Then, there exists a $C^1$ neighborhood $\cU$ of $g_1$, such that for every $f\in \cU$, one has 
	$$
	h_{top}(f) = h_{top}^u(f).
	$$
\end{proposition}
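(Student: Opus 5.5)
The inequality $h_{top}^u(f)\le h_{top}(f)$ holds in general: unstable entropy never exceeds topological entropy (for instance via the variational principle for unstable entropy in \cite{HHW}, combined with $h^u_\mu(f)\le h_\mu(f)$ for every invariant $\mu$). So the work lies in the reverse inequality $h_{top}(f)\le h_{top}^u(f)$. I will get it by reducing the global entropy to the entropy of small product neighborhoods, and then to unstable disks through Proposition~\ref{p.sy}.

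First I fix $\cU$ small enough that Proposition~\ref{p.p} holds for every $f\in\cU$. Then $f$ is dynamically coherent with one-dimensional center and not expanding along the center foliation. Proposition~\ref{p.sy} therefore applies, and gives $\delta_0>0$ such that $h(f,D(x,\delta))=h(f,\cF^u_\delta(x,f))$ for all $\delta<\delta_0$ and all $x$. I take $\delta<\delta_0$ also below the scale of local product structure. Each $D(x,\delta)$ contains an open neighborhood of $x$, with radius uniform in $x$ by continuity of the foliations. By compactness, finitely many sets $D(x_1,\delta),\dots,D(x_m,\delta)$ cover $\bM$. Since topological entropy of a finite union of compact sets is the maximum over the pieces (Bowen), I get
$$
h_{top}(f)=\max_i h(f,D(x_i,\delta))=\max_i h(f,\cF^u_\delta(x_i,f)).
$$
Entropy expansiveness \cite{LVY} guarantees that the choice of separation scale does not affect these quantities.

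It remains to bound $h(f,\cF^u_\delta(x,f))\le h^u_{top}(f)$. Any $(n,\epsilon)$-separated set inside the disk $\cF^u_\delta(x,f)$ can be compared with a separated set measured in the leaf metric. Points on the same $u$-disk that are $\epsilon$-close in $\bM$ at every time up to $n$ remain leafwise close, by local product structure and the uniform expansion of $f$ along $\cF^u$. So, up to a change of scale, $h(f,\cF^u_\delta(x,f))$ is bounded by the leafwise separated-set growth rate, which is the definition of $h^u_{top}(f)$ in \cite{HHW}. Alternatively, I can bound the count directly: a maximal $(n,\epsilon)$-separated set in a $u$-disk has cardinality at most $C\,|f^n(\cF^u_\delta(x,f))|_u/\epsilon$. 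Proposition~\ref{p.up} then gives growth rate at most $H_\cF=h^u_{top}(f)$, using \eqref{e.uentropy}.

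I expect this last comparison to be the main obstacle. The difficulty is that separation along the orbit in the ambient metric must be converted into leafwise length, so that for points on one $u$-disk, being $\epsilon$-close at all times up to $n$ forces them to lie within $O(\epsilon)$ in $f^n$-leaf distance. I will handle it by induction on $n$. Since unstable leaves are uniformly transverse to $\cF^{cs}$ and expanded, two points on the same $u$-plaque that are $\epsilon$-close in $\bM$ are $C\epsilon$-close along the leaf. Provided $C\epsilon$ is below the plaque size, this property propagates step by step along the orbit. Once that is in place, the length bound from Proposition~\ref{p.up} finishes the proof.
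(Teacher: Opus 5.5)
Your route is the paper's: cover $\bM$ by finitely many $D(x_i,\delta)$, combine Bowen's finite-union formula with Proposition~\ref{p.sy} to get $h_{top}(f)=\max_i h(f,\cF^u_\delta(x_i,f))$, and then compare the entropy of a $u$-disk with $h^u_{top}(f)$. The paper imports the equality $h(f,\cF^u_\delta(x_i,f))=\chi_u(\cF^u_\delta(x_i,f))=h^u_{top}(f)$ from \cite[Section 4.3]{HHW}. You instead prove only the upper bound on each disk, and get $h^u_{top}(f)\le h_{top}(f)$ globally from the variational principle. That is a fine way to split the work.

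The step you call the main obstacle, however, is argued in the wrong direction, and the induction you sketch would not give your count bound. To bound $(n,\epsilon)$-separated sets from above, you need \emph{ambient-separated $\Rightarrow$ leafwise separated at time $n$}. This is immediate:
\begin{itemize}
\item If $d(f^ky,f^kz)>\epsilon$ for some $k<n$, then the arc of $f^k(D)$ joining $f^ky,f^kz$ has length $>\epsilon$, because ambient distance is at most leaf length.
\item By \eqref{e.exp}, the arc of $f^n(D)$ joining $f^ny,f^nz$ then has length $>\lambda^{n-k}\epsilon>\epsilon$.
\item Ordering a separated set $E\subset D$ along $D$ and summing over consecutive points gives $(\#E-1)\,\epsilon<|f^n(D)|_u$.
\end{itemize}
No local product structure or induction is involved. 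The same inequality $d\le d^u$ also settles your first comparison with leafwise separated sets.

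What you propose to prove by induction is the converse: ambient $\epsilon$-closeness up to time $n$ forces leafwise $O(\epsilon)$-closeness at time $n$. That bounds Bowen balls from above, so it yields the lower bound $h(f,D)\ge\chi_u(D)$. This is the half of the equality that the paper takes from \cite{HHW}, and your route does not need it. With the short argument above in place of that paragraph, your proof is complete.
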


\begin{proof}
	Note that any $f$ sufficiently close to $g_1$ satisfies the assumptions of Proposition \ref{p.sy}. Fix any $\delta<\delta_0$ and cover $\bM$ by finitely many $D(x_i,\delta)$, $i=1,\cdots, N$. By Proposition \ref{p.sy} we have 
	$$
	h_{top}(f) = \max_{1\le i\le N} h(f, D(x_i,\delta)) = \max_{1\le i\le N} h(f,\cF^u_\delta(x_i,f)).
	$$
	
	On the other hand, by \cite[Section 4.3, Proof of Theorem C]{HHW} and \eqref{e.uentropy} we have 
	$$
	h(f,\cF^u_\delta(x_i,f)) = \chi_u(\cF^u_\delta(x_i,f)) = h^u_{top}(f)
	$$
	for every $i$.
	Combining these two equalities, we have $h_{top}(f) = h^u_{top}(f)$ as claimed. 
\end{proof}

\begin{proof}[Proof of Theorem \ref{t.geodesic}]
	By Proposition \ref{p.low} and Proposition \ref{p.up}, $\cF^u(f)$ has homogeneous exponential growth with $C_G = \max\{C_2^{-1}, C_3\}$ for every $f$ sufficiently close to $g_1$. Furthermore, by \eqref{e.uentropy} and Proposition \ref{p.entropy} we get 
	$$
	H_\cF= h^u_{top}(f) = h_{top}(f),
	$$ 
	and the proof of the theorem is complete. 
\end{proof}

\subsection{Rigidity for perturbations of time-one maps of geodesic flows}
In this section we establish some rigidity results for $C^{1+\alpha}$ diffeomorphisms $f\in\cU$. For this purpose, for any ergodic measure $\mu$ we denote by $\lambda^*(\mu),*=s,c,u$ the stable/center/unstable Lyapunov exponents of $\mu$. 
The next theorem concerns the rigidity of the volume measure from a Lyapunov exponents perspective.  This can be seen as the counterpart of \cite{SY19} for perturbations of geodesic flows.

\begin{theorem}\label{t.geodesic.rigidity}
	Let $g_1: \bM\to\bM$ be the time-one map of the geodesic flow on a compact surface of negative curvature. Then, there exists a $C^1$ neighborhood $\cU$ of $g_1$, such that for every $C^{\infty}$ diffeomorphism $f\in \cU$ that preserves the volume $m$, the following hold:
	
	Assume one of the following: 
	\begin{enumerate}
		\item Either $\lambda^s(m) = \lambda^u(m) = h_{top}(f)$.
		\item Or $\lambda^s(\mu_{{\operatorname{MME}}}) = \lambda^u(\mu_{{\operatorname{MME}}}) = h_{top}(f)$, where $\mu_{{\operatorname{MME}}}$ is a measure of maximal entropy of $f$.
	\end{enumerate}
	 Then $f$ is the time-one map of a smooth flow, and the log Jacobian of $f$ on both $\cF^u$ and $\cF^s$ are cohomologous to constants via continuous functions $\phi^*$, $*=s,u$.
\end{theorem}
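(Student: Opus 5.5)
The plan is to reduce both hypotheses to one situation: an ergodic measure $\mu$ of maximal $\cF^u$-entropy which is also a Gibbs $u$-state. Then Theorem \ref{t.2} gives a measurable solution of the cohomological equation on $\cF^u$. Applying everything to $f^{-1}$ gives the analogous statement on $\cF^s$. Finally, an external rigidity result upgrades the solutions to continuous ones and produces the flow structure.

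\emph{Case (1).} Here $m$ is volume with $\lambda^u(m)=h_{top}(f)$. By Pesin's formula for the $C^{\infty}$ (hence $C^{1+\alpha}$) volume-preserving $f$, the partial unstable entropy along the one-dimensional $\cF^u$ is $h^u_m(f)=\lambda^u(m)$; this is Ledrappier--Young for the unstable foliation, as in \cite{Y16,HHW}. By Theorem \ref{t.geodesic} and Proposition \ref{p.entropy}, $h^u_{top}(f)=h_{top}(f)=H_\cF$, so $m$ is a measure of maximal $\cF^u$-entropy. Since $m$ is smooth, it is also a Gibbs $u$-state.

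Ergodicity of $m$ is needed for Theorem \ref{t.2}. I would first try to get it from accessibility plus center bunching for perturbations of $g_1$, using Burns--Wilkinson. Failing that, I would pass to an ergodic component: the equality $h^u=\lambda^u$ persists on almost every component (Ledrappier's characterization), and the variational bound $h^u\le h^u_{top}$ then forces each component to be maximal. Theorem \ref{t.2} then yields a measurable $\phi^u$ with $\log|Df|_{E^u}|=\phi^u\circ f-\phi^u+h_{top}(f)$ $m$-a.e. For $f^{-1}$, the foliation $\cF^s$ plays the role of $\cF^u$. Homogeneous growth of $\cF^s$ follows from Theorem \ref{t.geodesic} applied to $g_{-1}$, the time-one map of the reversed geodesic flow, which is again a geodesic-type flow. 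With $|\lambda^s(m)|=h_{top}(f)$ (the sign convention as in the statement), this gives $\phi^s$.

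\emph{Case (2).} Here $\mu=\mu_{MME}$ has $h_\mu=h_{top}=h^u_{top}$, so $\mu$ has maximal $u$-entropy, since $h^u_\mu=h_\mu$ when the center is not expanding. The step needing an argument is the Gibbs $u$-state property: $\lambda^u(\mu)=h_{top}(f)=h^u_\mu(f)$, and Ledrappier's theorem says equality in Ruelle's inequality along $\cF^u$ forces absolutely continuous $u$-conditionals. Symmetrically, $\mu$ is a Gibbs $s$-state. Theorem \ref{t.2} applied to $f$ and $f^{-1}$ gives $\phi^u,\phi^s$.

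\emph{Regularity upgrade.} I would use Livšic-type measurable rigidity for partially hyperbolic systems, \cite[Theorem A]{Wilkinson}, as in the nilmanifold section, noting that $\mu$-measurable solutions suffice by the same remark. This needs $f$ accessible, center bunched and volume preserving, which holds on a small $C^1$ neighborhood of $g_1$. It yields continuous $\phi^u,\phi^s$. In case (2) one extra point is needed: $\mu$ has full support, because it is Gibbs along entire leaves by Theorem \ref{t.1} and $\cF^u$ is minimal.

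Continuous cohomology to constants says every periodic orbit has stable and unstable multipliers $e^{\mp n h_{top}}$. By the $u$- and $s$-Gibbs properties this forces $m$ (or $\mu$) to have smooth conditionals along both $\cF^s$ and $\cF^u$. The result on the center direction then follows the argument in the third bullet of the nilmanifold theorem. There, the equality $\lambda^c=0$ combined with smooth $s,u$ conditionals gives smoothness of the center foliation and of the holonomies (Avila--Viana--Wilkinson for perturbations of geodesic flows), so $f$ embeds in a smooth flow.

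\textbf{Main obstacle.} I expect this last step to be the hardest. It means checking that the hypotheses of the Avila--Viana--Wilkinson classification follow from the cohomology conclusions, especially $\lambda^c=0$. For that I would use $\lambda^s+\lambda^c+\lambda^u=0$ for volume, or the Margulis-type product structure in case (2).
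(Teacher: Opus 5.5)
Your reduction to Theorem \ref{t.2} in both cases matches the paper. That covers ergodicity of $m$ via accessibility and Burns--Wilkinson, $h^u_m=\lambda^u(m)$, Ledrappier--Young for the Gibbs $u$-property of $\mu_{\operatorname{MME}}$, and passing to $f^{-1}$ for $\cF^s$. Your use of \cite[Theorem A]{Wilkinson} in case (1) also matches. The gap is the step that makes $f$ a time-one map.

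\textbf{The embedding step.} Your proposed chain is: continuous cohomology, hence periodic multipliers $e^{\mp nh_{top}}$, hence smooth $s,u$ conditionals, hence with $\lambda^c=0$ a smooth center foliation. This chain gives no information about the center direction. The conditionals of $m$ along $\cF^s,\cF^u$ are smooth for any $C^\infty$ volume-preserving $f$. The third bullet of the nilmanifold theorem contains no argument to imitate; it is a compact-center statement quoted without proof.

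What the paper actually feeds into \cite{AVW} is the absolute continuity of $\cF^c$. The missing idea is how to get it from the Margulis machinery, via Theorem \ref{t.holonomy}, which you never use:
\begin{enumerate}
\item Since the measure is both a $u$-MME and a Gibbs $u$-state, Theorem \ref{t.1} puts its $u$-conditionals in $\Gamma_\Theta(\cF^u_\cA(x))$ for a uniform $\Theta$.
\item By a weak-$*$ limit argument, leafwise Lebesgue is then uniformly comparable to the weak Margulis measures on every $u$-disk.
\item By Lemma \ref{l.holonomy}, $\cF^{cs}$ is a transversal foliation along which $f$ is not expanding. So Theorem \ref{t.holonomy} shows that local $cs$-holonomies carry weak Margulis measures to weak Margulis measures with Jacobian in $[L_H^{-1},L_H]$.
\item Combining the last two points, $cs$-holonomies are absolutely continuous for leaf Lebesgue, with bounded Jacobian.
\item Repeating this for $f^{-1}$ handles $\cF^{cu}$, hence $\cF^c$, and then \cite{AVW} applies.
\end{enumerate}
In case (1) you do know $\lambda^c(m)=0$. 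However, that is not the input the paper uses \cite{AVW} with, and you have not shown that it suffices.

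\textbf{Case (2).} Here there are two further problems.

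First, $\lambda^c(\mu_{\operatorname{MME}})=0$ and the $s,u$-Gibbs property of $\mu_{\operatorname{MME}}$ say nothing about the volume, which is the measure \cite{AVW} concerns. Even a zero-exponent version of that result would therefore be unusable, since nothing is known about $\lambda^c(m)$. The paper bridges this in two steps. It shows $\supp\mu_{\operatorname{MME}}=\bM$, because the support is saturated by $u$- and $s$-leaves and $f$ is accessible; you appeal instead to minimality of $\cF^u(f)$, which is not established for perturbations. With full support, the limit argument above gives Lebesgue--Margulis comparability at every point, and the same $cs$/$cu$ holonomy argument then runs.

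Second, your regularity upgrade applies \cite[Theorem A]{Wilkinson} to a solution defined only $\mu_{\operatorname{MME}}$-a.e. That theorem needs a solution that is measurable for volume. The remark you invoke concerns $\mu$-measurable versus Borel in a setting where $\mu=m$, and full support of $\mu_{\operatorname{MME}}$ does not supply volume-measurability. The paper reverses your order: it first obtains the smooth flow $X$, then deduces $\mu_{\operatorname{MME}}=m$ from uniqueness of the Gibbs $u$-state of $X$, and only then applies Wilkinson's theorem.
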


\begin{proof}
	By \cite{KK}, $g_1$ is accessible and hence stably ergodic (see \cite{GPS} and \cite{BW}). In particular, if $f$ is $C^\infty$, volume preserving, and $C^1$ close to $g_1$, then the volume measure is ergodic for $f$.
	
	We first consider Assumption (1). Under the assumption that $f$ is volume preserving and $\lambda^u(m) = \lambda^s(m) = h_{top}(f) = h^u_{top}(f)$ where the last inequality is due to Theorem \ref{t.geodesic}, we see that $m$ has zero center exponent and is an ergodic $u$-MME that is also a Gibbs $u$-state. Then by Theorem \ref{t.2}, the log Jacobian of $f$ is cohomologous to a constant via a $m$-measurable solution $\phi$. Then, \cite[Theorem A]{Wilkinson} shows that the solution can be made continuous.

	Next we show that the $\cF^{cs}$ is absolutely continuous. Recall that the references measures $\Gamma(D)$ are defined on every $u$-disk $D$. Let $\Theta\gg1$ be given, and we define 
	\begin{align*}
		\Gamma_\Theta(D) = \{\mu: &\mu\mbox{ is a probability measure on D that is absolutely} \\&\mbox{ continuous w.r.t. some }\nu\in\Gamma(D)  \mbox{ with  $\frac{d\mu}{d\nu}\asymp_{\Theta} 1$}\}.		
	\end{align*}
	Since $m$ is a Gibbs $u$-state and also a $u$-MME, there exist $\Theta>1$ such that for $m$-a.e. $x\in \bM$, the conditional measures $m_x:= m_x^{\cF^u,\cA}\in \Gamma_\Theta(\cF^u_\cA(x))$ where $\cA$ is the partition given in Section \ref{s.3}.  
	On the other hand, Theorem \ref{t.properties} implies that there exists $\Theta'\gg\Theta$ such that,  if $(x_n)$ is a sequence of points converging to some $x\in\bM$, then for any $\nu_n\in \Gamma_\Theta(\cF^u_\cA(x_n))$, any weak-* limit point of $\nu$ of $(\nu_n)$ satisfies $\nu\in \Gamma_{\Theta'}(\cF_\cA(x))$.

	Now let $x\in\bM$ be arbitrary and $y\in \cF^{cs}_\delta(x)$ where $\delta$ is the size of the local product structure. We will show that the $cs$-holonomy $H^{loc}_{x,y}$ is absolutely continuous with respect to $m_x$ and $m_y$ with uniformly bounded Jacobian. For this purpose, take a sequence $(x_n)$ consisting of typical points of $m$ that converges to $x$, and similarly $(y_n)$ converges to $y$. Since $m_{x_n}\in\Gamma_\Theta(\cF_\cA(x_n))$ and $m_{y_n}\in\Gamma_\Theta(\cF_\cA(y_n))$, we see that $m_x\in \Gamma_{\Theta'}(\cF_\cA(x))$ and $m_y\in \Gamma_{\Theta'}(\cF_\cA(y))$, since the leaf-wise volumes are continuous. On the other hand, by Theorem \ref{t.holonomy}, $H^{loc}_{x,y}$ is absolutely continuous with respect to measures in $\Gamma(\cF_\cA(x_n))$ and $\Gamma(\cF_\cA(y_n))$ with uniformly bounded Jacobian. This means that the Jacobian is also uniformly bounded with respect to measures in $\Gamma_{\Theta'}(\cF_\cA(x))$ and $\Gamma_{\Theta'}(\cF_\cA(y))$ (although the bound depends on $\Theta'$). In particular, we have shown that $H^{loc}_{x,y}$ is absolutely continuous with respect to $m_x$ and $m_y$ with uniformly bounded Jacobian. Since $x,y$ are arbitrary, it follows that $\cF^{cs}$ is absolutely continuous. 
	
	Note that all the previous arguments apply to $f^{-1}$. We get that $\cF^{cu}$ is also absolutely continuous, and therefore $\cF^c$ is absolutely continuous. It then follows from the main result of \cite{AVW} that $f$ is the time-one map of a smooth flow.
	
	{
	Next we consider Assumption (2). In this case we have $\lambda^s(\mu_{{\operatorname{MME}}}) = \lambda^u(\mu_{{\operatorname{MME}}}) = h_{top}(f)$, so $\mu_{\operatorname{MME}}$ is also a $u$-MME due to Theorem \ref{t.geodesic}. We also get that $\mu_{\operatorname{MME}}$ has zero center exponent, and is a Gibbs $u$-state by \cite{LY1}. As before, Theorem \ref{t.2} shows that the log Jacobian of $f$ is cohomologous to a constant via a function $\phi$ which is $\mu_{\operatorname{MME}}$-measurable.
	
	In addition, the support of $\mu_{\operatorname{MME}}$ consists of entire $u$-leaves, by Theorem \ref{t.properties}. By considering $f^{-1}$ we get that $\supp(\mu_{\operatorname{MME}})$ also consists of entire $s$-leaves. Since $f$ is accessible, we have $\supp(\mu_{\operatorname{MME}}) = \bM$. 
	Then, it is straightforward to check that the previous proof of $\cF^{cs}$ being absolutely continuous still applies in this case. By considering $f^{-1}$ we also obtain that $\cF^{cu}$ is also absolutely continuous, and therefore $f$ is the time-one map of a smooth flow $X$ due to the main result of  \cite{AVW}. In particular, $\mu_{\operatorname{MME}} = m$ since Gibbs $u$-state of $X$ is unique, and the  solution $\phi$ can be made continuous due to \cite[Theorem A]{Wilkinson}.}
	 
\end{proof}

Note that under both Assumptions (1) and (2) of Theorem \ref{t.geodesic.rigidity}, $m$ coincides with the unique measure of maximal entropy of a smooth vector field $X$. Using the main result of \cite{DLVY} we obtain:
\begin{corollary}
	Under the assumptions of Theorem \ref{t.geodesic.rigidity}, $f$ is smoothly conjugated to the time-one map of an algebraic flow.
\end{corollary}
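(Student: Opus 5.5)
The corollary should follow by stacking \cite{DLVY} on the output of Theorem \ref{t.geodesic.rigidity}. That theorem already gives that $f=X_1$ is the time-one map of a smooth flow $X$, and its proof identifies $m$ with the measure of maximal entropy of $X$. I would first record the properties of $X$ that \cite{DLVY} needs. Since $f$ is $C^1$-close to $g_1$ and partially hyperbolic with center bundle tangent to $X$ (the center foliation is the orbit foliation, as it is absolutely continuous and is produced by \cite{AVW}), $X$ is a $C^\infty$ Anosov flow on $\bM=T^1S$ with one-dimensional stable and unstable bundles. It preserves the smooth volume $m$, and it is transitive because it is orbit equivalent to $g_t$ via the center leaf conjugacy $h_f$ of Proposition \ref{p.p}.

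Next I would convert the hypotheses into the statement that $m$ is the MME of $X$. The topological entropy of the flow equals $h_{top}(X_1)=h_{top}(f)$. Under Assumption (1), $\lambda^u(m)=h_{top}(f)$ together with Pesin's formula gives $h_m(f)=\lambda^u(m)=h_{top}(f)$, so $m$ is an MME. Under Assumption (2) this was shown at the end of the proof of Theorem \ref{t.geodesic.rigidity}. Uniqueness of the MME for transitive Anosov flows then gives $m=\mu_{\operatorname{MME}}(X)$, so the Liouville/volume measure coincides with the Bowen--Margulis measure of a contact-type (volume-preserving, three-dimensional) Anosov flow.

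The final step invokes the main result of \cite{DLVY}: for a $C^\infty$ volume-preserving Anosov flow on a $3$-manifold whose MME equals volume, the flow is $C^\infty$-conjugate to an algebraic flow, here the geodesic flow of a constant negative curvature surface up to constant time change. A smooth conjugacy $\Psi$ between flows conjugates their time-one maps, so $f=X_1=\Psi^{-1}\circ\phi_1\circ\Psi$ with $\phi_1$ algebraic.

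I expect the main obstacle to be checking that the hypotheses of \cite{DLVY} are met exactly. In particular, one must confirm that $X$ is a genuine Anosov flow and not merely a flow whose time-one map is partially hyperbolic. I would argue this by noting that uniform contraction and expansion of $E^s,E^u$ under $f=X_1$ interpolate to the flow by compactness of $[0,1]$, and that $X$ is nonsingular because the center leaves are lines or circles carrying the orbits. One must also pin down the normalization: if $X$ is reparametrized by a constant, the algebraic model changes by the same constant, and the conclusion still holds.
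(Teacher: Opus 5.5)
Your proposal is correct and follows essentially the same route as the paper. The paper simply notes that Theorem \ref{t.geodesic.rigidity} makes $f$ the time-one map of a smooth volume-preserving flow $X$ whose unique measure of maximal entropy is $m$, and then applies the main result of \cite{DLVY}; your additional checks (that $X$ is a transitive Anosov flow tangent to the center, Pesin's formula under Assumption (1), and the constant time-change normalization) only make explicit what the paper leaves implicit, though you should drop the phrase ``contact-type'', since \cite{DLVY} needs volume preservation, not a contact structure.
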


\section*{Acknowledgment}

\end{document}